\newtheorem{theorem}{Theorem}[section]
\newtheorem{lemma}[theorem]{Lemma}
\newtheorem{proposition}[theorem]{Proposition}
\newtheorem{corollary}[theorem]{Corollary}
\theoremstyle{definition}
\newtheorem{definition}[theorem]{Definition}
\newtheorem{example}{Example}[section]
\theoremstyle{remark}
\newtheorem*{remark}{Remark}
\newcommand{\colim@}[2]{%
  \vtop{\m@th\ialign{##\cr
    \hfil$#1\operator@font colim$\hfil\cr
    \noalign{\nointerlineskip\kern1.5\ex@}#2\cr
    \noalign{\nointerlineskip\kern-\ex@}\cr}}%
}
\newcommand{\colim}{%
  \mathop{\mathpalette\colim@{\rightarrowfill@\textstyle}}\nmlimits@
}
\newcommand{\hocolim@}[2]{%
  \vtop{\m@th\ialign{##\cr
    \hfil$#1\operator@font hocolim$\hfil\cr
    \noalign{\nointerlineskip\kern1.5\ex@}#2\cr
    \noalign{\nointerlineskip\kern-\ex@}\cr}}%
}
\newcommand{\hocolim}{%
  \mathop{\mathpalette\hocolim@{\rightarrowfill@\textstyle}}\nmlimits@
}
\newcommand{\Tot}{\operatorname{Tot}}
\newcommand{\Hom}{\operatorname{Hom}}
\newcommand{\id}{\operatorname{id}}
\newcommand{\Ind}{\operatorname{Ind}}
\newcommand{\Res}{\operatorname{Res}}
\newcommand{\Cn}{\operatorname{Cone}}
\newcommand{\A}{\mathcal{A}}
\newcommand{\B}{\mathcal{B}}
\newcommand{\C}{\mathcal{C}}
\newcommand{\F}{\mathcal{F}}
\newcommand{\cS}{\mathcal{S}}
\newcommand{\T}{\mathcal{T}}
\newcommand{\op}[1]{{#1}^{op}}
\newcommand{\Mod}{\operatorname{-Mod}}
\newcommand{\RMod}{\operatorname{Mod-}}
\newcommand{\hot}{\operatorname{Hot}}
\newcommand{\Ker}{\operatorname{Ker}}
\newcommand{\ev}{\operatorname{ev}}
\newcommand{\Coker}{\operatorname{Coker}}
\newcommand{\img}{\operatorname{Im}}
\newcommand{\Img}{\operatorname{Im}}
\newcommand{\tow}[1]{\overset{#1}{\to}}
\newcommand{\Hm}[1]{\Hom_{#1}}
\newcommand{\tor}[1]{{#1}^{\operatorname{tor}}}
\newcommand{\kn}{k[t]/(t^{n+1})}
\newcommand{\dsi}{D^{\operatorname{si}}}
\newcommand{\Gr}{\operatorname{Gr}}
\title{Filtered derived categories of curved deformations}
\author{Alessandro Lehmann}
\address[Alessandro Lehmann]{Universiteit Antwerpen, Departement Wiskunde, Middelheimcampus,
Middelheimlaan 1,
2020 Antwerp, Belgium}
\email{alessandro.lehmann@uantwerpen.be}
\address{SISSA, Via Bonomea 265, 34136 Trieste TS, Italy}
\email{alehmann@sissa.it}
\author{Wendy Lowen} 
\address[Wendy Lowen]{Universiteit Antwerpen, Departement Wiskunde, Middelheimcampus,
Middelheimlaan 1,
2020 Antwerp, Belgium}
\email{wendy.lowen@uantwerpen.be}
\thanks{
This project has received funding from the European Research Council (ERC) under the European Union’s Horizon 2020 research and innovation programme (grant agreement No. 817762).
}
\subjclass[2020]{18G80 (Primary), 		16E45, 13D10, 18G25 (Secondary)}
\keywords{}
\begin{document}

\begin{abstract}
We propose a solution to the ``curvature problem'' from \cite{MoritaDef}, \cite{keller2009nonvanishing}, \cite{LowenCurvature} for infinitesimal deformations. 
Let $k$ be a field, $A$ a dg algebra over $k$ and $A_n = A[t]/(t^{n+1})$ a cdg algebra over $R_n =  k[t]/(t^{n+1})$, $n \geq 0$, with reduction $A_n/tA_n = A$. We define the \emph{$n$-derived category} $D^n(A_n)$ as the quotient of the homotopy category by the modules for which all quotients appearing in the associated graded object are acyclic. We prove this to be a compactly generated triangulated category with a semiorthogonal decomposition by $n +1$ copies of $D(A)$, in which Positselski's semiderived category embeds admissibly.
\end{abstract}

\maketitle

\tableofcontents

\section{Introduction}\label{parintro}

Dg categories have become increasingly important in contemporary algebra and geometry, and in particular in algebraic geometry, representation theory and neighboring fields \cite{kellerICM}. They typically model (derived) categories of sheaves or modules, and feature prominently in both derived algebraic geometry and noncommutative algebraic geometry \cite{DAGTo, toen_derived_morita, bonvdbgenerators, HodgeMirror, kaledinhodge}. From this perspective, it is surprising that no comprehensive infinitesimal deformation theory of dg categories exists today, in contrast to the situation for other categorical models of spaces, like abelian categories \cite{AbDef} and prestacks \cite{van2023box}. In the present paper, we lay the foundations for such a new theory, focusing on the compactly generated case, that is, the case of categories generated by a dg algebra. 

It has been known for over 15 years that in general, deformation theory of dg algebras faces the \emph{curvature problem}, stemming from the following observations:
\begin{enumerate}
    \item According to the Hochschild complex, deformations of a dg algebra include \emph{curved} dg algebras and in general it is not possible to realise the full cohomology by means of dg deformations, even up to Morita equivalence \cite{MoritaDef, DAGX}; 
    \item In general, such curved dg algebras have no conventional derived category, and alternative derived categories may vanish for deformations \cite{Positselski_2011, keller2009nonvanishing}.
\end{enumerate}

As a consequence of these two points taken together, already the classical and fundamental algebro-geometric concept of a first order deformation seems to have no good counterpart in the realm of dg categories. This is particularly problematic from the point of view of noncommutative algebraic geometry, where the key philosophy is that categories make up for the loss of classical spaces. However, (2) seems to tell us that not only do we have to deal with a lack of points, in the presence of curvature we also have to deal with a lack of sheaves.

Before presenting our proposed solution to this problem, we now elaborate on these two points, starting with (2).

Curved dg algebras (cdg algebras) are a generalization of differential graded algebras introduced  originally in \cite{poquadual} in the context of quadratic duality. Since these objects have differentials that a priori do not square to zero, homological algebra - and in particular, derived categories - over them cannot be developed in the standard way. Alternative derived categories ``of the second kind" over curved dg algebras have been introduced and studied by Positselski \cite{Positselski_2011}. Whereas these capture for instance the geometric setup of matrix factorizations very well, it has been observed that no general notion can exist which actually \emph{extends} the classical derived category of a dg algebra in a natural way. Further, derived categories of the second kind have a tendency to trivialize \cite{keller2009nonvanishing}.
This is also true of the semiderived categories from \cite{Positselski_2018}, which interpolate between derived categories of the first and second kind in filtered settings like the deformation setup.

On the other hand, when dealing with deformations of honest dg algebras, one may expect the situation to be better because the curvature is ``small"; this brings us to point (1) above. In the deformation setup, it is indeed sometimes possible to get rid of curvature by choosing generators and Hochschild representatives in a particular way. An example is given by deformations of nice schemes, which are shown to be compactly generated in \cite{LOWEN2013441}.

However, when one turns to arbitrary dg algebras, no general solution along these lines exists \cite{MoritaDef}. Precisely, it was shown in loc. cit.  that in general there exist Hochschild classes which cannot be realized by means of Morita deformations.

The goal of the present paper is to solve the curvature problem by taking an altogether different and novel approach to the definition of derived categories of curved deformations of dg algebras. Rather than trying to model the behavior of classical derived categories under, say, algebra deformation or abelian deformation, we look for a new type of derived category associated to a curved deformation which naturally fits into a ``categorified square zero extension'' (in the first order case) by its very construction. 

The suitable notion to realize this turns out to be surprisingly simple and in principle available as soon as the curvature is small with respect to a suitable filtration, making the quotients appearing in the associated graded object uncurved. Indeed, in this case one can consider filtered quasi-isomorphisms, which by definition induce ordinary quasi-isomorphisms on the associated graded. In an uncurved context the use of filtered quasi-isomorphisms goes back at least to \cite{IllusieCot}. For cdg algebras it has been used in \cite{belliermillès2020homotopy} and \cite{calaque2021lie} and for coalgebras in \cite{Positselski_2011}. However, to the best of our knowledge, filtered derived categories of curved dg algebras have not yet been proposed or studied in the literature before.

In this paper, we put ourselves in the classical setup of infinitesimal deformations of order $n$, which is already of interest. In the rest of this introduction, we take $k$ to be a field, but most of our results hold over a commutative ground ring. Precisely, we let $A$ be a dg algebra over $k$ and $A_n = A[t]/(t^{n+1})$ a curved dg algebra over $R_n =  \kn$, with reduction $A_n/tA_n = A$. In \S \ref{parn} we define the \emph{$n$-derived category} $D^n(A_n)$ as the quotient of the homotopy category by the modules for which all quotients appearing in the associated graded object are acyclic (Definition \ref{deffiltered}). 

The first main application of $n$-derived categories we want to realize is the following square of isomorphisms for a dg algebra $A$:
\begin{equation*}\label{eqsquare}
    \xymatrix{{\mathsf{HH}^2(A)} \ar[d]_{\alpha} \ar[r]^-{\mu} & {\mathrm{cDef}^{1-\mathrm{Mor}}_A(k[\epsilon])} \ar[d]^{\beta} \\
    {\mathsf{HH}^2(D_{\mathrm{dg}}(A))} \ar[r]_-{\eta} & {\mathrm{Def}^{\mathrm{sq}-0}_{D_{\mathrm{dg}}(A)}(k[\epsilon])}
    }
\end{equation*}
Here, the isomorphism $\alpha$ is the characteristic morphism for Hochschild cohomology, see \cite{lowchar}, and ${\mathrm{cDef}^{1-\mathrm{Mor}}_A(k[\epsilon])}$ stands for curved dg deformations of $A$ up to $1$-Morita equivalence, that is, equivalence of corresponding $1$-derived categories. The fact that there is indeed a well-defined isomorphism $\nu$ is shown in the companion paper \cite{cMoritaDef}, combining the tools from \cite{MoritaDef} with the ingredients from the present paper. The arrow $\eta$ will be formally introduced in \cite{FutDef} for more general pretriangulated dg categories than the dg derived category $D_{\mathrm{dg}}(A)$, and realizes Hochschild cocycles through categorified square zero extension; this is work in progress. The arrow $\beta$ realises the $1$-derived category of a curved deformation as such a square zero extension, see the depiction \eqref{eqrecol} below.
In future work, we will further relate this to the approach put forth in \cite{KaledinLowen, BKL} to abelian deformations in the absolute setup using Mac Lane rather than Hochschild cohomology, as well as to the approach from \cite{GENOVESEtstr, genovese2022tstructures, Tst3} to deformations of pretriangulated dg categories with t-structures and enough derived injectives. Moreover, the relation with categories of twisted objects (\cite{FilteredAinf, defotriamodels, vandekreeke2023}) remains to be fully elucidated.

Let us explain some of the technical motivations behind our definition of the $n$-derived category. In \cite{keller2009nonvanishing}, the vanishing of several ``derived'' categories of cdg algebras was shown assuming only some very basic axioms, key among which was the fact that any short exact sequence should give rise to a triangle in the quotient. This is the property that we negate in order to obtain a derived category that is guaranteed not to vanish. Indeed, in our case a short exact sequence of $A_n$-modules 
$0 \to M \to N \to L \to 0$ is a triangle precisely when for all $i=1, \ldots, n$ the induced sequence \[
0 \to \frac{M}{t^iM}\to \frac{N}{t^iN} \to \frac{L}{t^iL} \to 0
\] is exact. Vice versa, all triangles up to isomorphism are of this form.

Our first main result is the following (see Theorem \ref{propcompgen} and Theorem \ref{filtrationscat} in the text):

\begin{theorem}\label{thm1}
    The category $D^n(A_n)$ is compactly generated and allows for a semiorthogonal decomposition into $n+1$ copies of $D(A)$.  
\end{theorem}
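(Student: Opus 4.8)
The plan is to study $D^n(A_n)$ through the $t$-adic filtration $M\supseteq tM\supseteq\dots\supseteq t^nM\supseteq 0$ of a cdg $A_n$-module $M$. Since $A_n/tA_n=A$ is uncurved, the curvature of $A_n$ lies in $tA_n$ and therefore annihilates each graded quotient $t^iM/t^{i+1}M$; thus $t^iM/t^{i+1}M$ is an honest dg $A$-module, and $M\mapsto t^iM/t^{i+1}M$ is an additive functor $\hot(A_n)\to\hot(A)$ preserving homotopies, mapping cones, and arbitrary coproducts. By construction of $D^n(A_n)$ it kills the subcategory $\mathcal{N}_n$ of $n$-acyclic modules, hence descends to a triangulated, coproduct-preserving functor $\Gamma_i\colon D^n(A_n)\to D(A)$; the family $(\Gamma_i)_{i=0}^n$ is jointly conservative, again by construction, and $\mathcal{N}_n$ is localizing, so $D^n(A_n)$ is cocomplete. (For the finer homological statements below I would rely on the filtered / $n$-projective resolution theory developed in \S\ref{parn}.)

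For the semiorthogonal decomposition I argue by induction on $n$, the base case being $D^0(A_0)=D(A)$. The quotient $A_n\twoheadrightarrow A_{n-1}=A_n/t^nA_n$ is a cdg-algebra map, so restriction of scalars gives $\iota\colon\hot(A_{n-1})\to\hot(A_n)$; since $t^i(\iota N)/t^{i+1}(\iota N)=t^iN/t^{i+1}N$ for $i\le n-1$ and vanishes for $i=n$, $\iota$ sends $(n-1)$-acyclic modules to $n$-acyclic ones and descends to $\iota\colon D^{n-1}(A_{n-1})\to D^n(A_n)$. Its homotopy-level left adjoint is $M\mapsto M/t^nM$, and the identity $t^i(M/t^nM)/t^{i+1}(M/t^nM)=t^iM/t^{i+1}M$ for $i\le n-1$ shows it sends $n$-acyclic modules to $(n-1)$-acyclic ones, so it descends to a left adjoint $\rho\colon D^n(A_n)\to D^{n-1}(A_{n-1})$ of $\iota$, with $\rho\iota\simeq\id$; hence $\iota$ is fully faithful. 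By the standard mechanism for a fully faithful functor admitting a left adjoint, $D^n(A_n)=\langle\,{}^{\perp}\iota(D^{n-1}(A_{n-1})),\ \iota(D^{n-1}(A_{n-1}))\,\rangle$ is a semiorthogonal decomposition; together with the inductive hypothesis, $\iota(D^{n-1}(A_{n-1}))$ contributes $n$ admissible copies of $D(A)$, and it remains to identify the last piece $\mathcal{E}_n:={}^{\perp}\iota(D^{n-1}(A_{n-1}))$ with $D(A)$.

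This identification is the crux. One first checks, using $\rho\dashv\iota$ and the description of $\rho$, that $\mathcal{E}_n=\Ker(\rho)=\{\,M\in D^n(A_n):t^iM/t^{i+1}M\ \text{acyclic for}\ i\le n-1\,\}$, so $\Gamma_n$ restricts to a functor $\mathcal{E}_n\to D(A)$. Full faithfulness of this restriction should follow from a $\Hom$-computation in the Verdier quotient, the point being that on objects of $\mathcal{E}_n$ the calculus of fractions collapses onto the top graded piece. Essential surjectivity is the genuinely difficult point, and I expect it to be the main obstacle: one must produce, for every h-projective dg $A$-module $X$, a cdg $A_n$-module $N$ with $t^iN/t^{i+1}N$ acyclic for $i\le n-1$ and $t^nN/t^{n+1}N\simeq X$ — and here the curvature of $A_n$ has to be absorbed into the construction, e.g.\ by twisting a semifree presentation of $X$ over $A_n$ by a suitable correction term, or by invoking the resolution theory of \S\ref{parn}.

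Finally, compact generation. By the resolution theory each $\Gamma_i$ acquires a left adjoint $\lambda_i\colon D(A)\to D^n(A_n)$, and then $\{\lambda_i(A)\}_{i=0}^n$ is a compact generating set: compactness because $\Hom(\lambda_i A,-)\cong\Hom_{D(A)}(A,\Gamma_i(-))$ commutes with coproducts (as $\Gamma_i$ does and $A$ is compact in $D(A)$), and generation because the $\Gamma_i$ are jointly conservative. Alternatively this is immediate from the semiorthogonal decomposition: each piece $\simeq D(A)$ is compactly generated and its inclusion preserves coproducts — for $\iota(D^{n-1}(A_{n-1}))$ because $\iota$ is a restriction of scalars, and for $\mathcal{E}_n$ because the reflector onto $\iota(D^{n-1}(A_{n-1}))$, being a left adjoint, preserves coproducts while $\mathcal{E}_n$-coreflection is computed as a fibre — and a finite semiorthogonal decomposition into such pieces is compactly generated by the union of the transported generating sets, which here consists of $n+1$ objects, one in each copy of $D(A)$.
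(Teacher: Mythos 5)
Your overall architecture is exactly the paper's: the recollement-style induction via the fully faithful $\iota_n\colon D^{n-1}(A_{n-1})\hookrightarrow D^n(A_n)$ with left adjoint $\Coker t^n$, the identification of the complementary piece with $\Ker(\Coker t^n)$, and the functor $\Img t^n$ to $D(A)$. The problem is that every step you flag as "the crux" or defer to "the resolution theory of \S\ref{parres}" is precisely the mathematical content of the theorem, and none of it is supplied. The whole difficulty in the curved case is that the obvious candidates $A_i=A_n/t^{i+1}A_n$ are \emph{not} cdg $A_n$-modules (a curved algebra is not a module over itself), so one must build replacements by hand: the paper's generators are the twisted modules $\Gamma_i=(A_i\oplus A_{i-1}[1])_{\gamma_i}$ with twisting matrix involving $\frac{c}{t}$, and verifying that $d_{\Gamma_i}^2=c$ requires the nontrivial identity $\pi(d_{A_n}\frac{c}{t})=0$, which in turn uses $\Ker t_{A_n}=t^nA_n$. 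Without these objects (and the further object $G_n=\operatorname{coCone}(\Gamma_n\to\Gamma_n/t^n\Gamma_n)$, which corepresents $M\mapsto t^nM$ up to quasi-isomorphism) you have no compact generators, no way to prove essential surjectivity of $\Img t^n$ on $\mathcal{E}_n$ (one needs an object of $\mathcal{E}_n$ hitting the generator $A\in D(A)$), and no handle on full faithfulness, which the paper reduces to the single endomorphism computation $\Hm{A_n}(G_n,G_n)\simeq t^nG_n\simeq A$. Your phrase "twisting a semifree presentation of $X$ over $A_n$ by a suitable correction term" names the right idea but is exactly the step that has to be carried out.

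There is also a circularity in your treatment of compact generation: the left adjoints $\lambda_i$ of $\Gr_t^i$ cannot be obtained from "the resolution theory", because that theory (existence of $n$-cell resolutions, of $\mathbf{p}_n$, etc.) is itself built on the compact generators $\Gamma_i$; nor is Brown representability available before compact (or well) generation of $D^n(A_n)$ is known. Your alternative argument via the semiorthogonal decomposition also has a gap: in a decomposition $\langle\A,\B\rangle$ the compact generators of the \emph{left-orthogonal} piece $\B$ do become compact in the ambient category when the projection to $\B$ preserves coproducts, but compactness in $D^n(A_n)$ of objects coming from $\A=\iota_n D^{n-1}(A_{n-1})$ is not formal, since $\Hm{}(A',-)$ must also commute with coproducts on $\B$. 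In the paper this is not an issue because the generators of the subcategory are again the explicit objects $\Gamma_0,\dots,\Gamma_{n-1}$, whose compactness in $D^n(A_n)$ is checked directly from $\Hm{A_n}(\Gamma_i,M)\cong\Ker t^{i+1}_M\oplus\Ker t^i_M[-1]$ and the fact that $\Ker t^i$ commutes with coproducts.
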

The generators we describe are closely related to the ``uncurving'' generators from \cite{LowenCurvature}, which however in the context of Morita deformations from loc. cit.  require an additional hypothesis in order to exist. 

The semiorthogonal decomposition in Theorem \ref{thm1} stems from a recollement (see the remark after Proposition \ref{factors})
\begin{equation} \label{eqrecol}
\begin{tikzcd}
	{D^{n-1}(A_{n-1})} & {D^{n}(A_{n})} & {D(A)}
	\arrow["\iota_{n}", hook, from=1-1, to=1-2]
	\arrow["{\Img t^n}"{pos=0.6}, from=1-2, to=1-3]
	\arrow[curve={height=-28pt}, from=1-2, to=1-1]
	\arrow[curve={height=28pt}, from=1-2, to=1-1]
	\arrow[curve={height=28pt}, from=1-3, to=1-2]
	\arrow[curve={height=-28pt}, from=1-3, to=1-2]
\end{tikzcd}
\end{equation}
in which the essential image of $\iota_n$ is given by the $A_n$-modules $M$ for which $t^nM$ is acyclic, reminiscent of the setup of \emph{abelian} deformations from \cite{AbDef}. In the fist order case, we thus obtain the categorified square zero extension to be used in the square \eqref{eqsquare}.

Theorem \ref{thm1} suggests a resemblance between the $n$-derived category and the construction of a categorical resolution of singularities of \cite{Kuznetsov_2014}, since they both come equipped with a semiorthogonal decomposition into the same factors.
Precisely, we show:

\begin{proposition}
    Let $A$ be a smooth dg algebra and suppose $A_n$ is a dg deformation of $A$ ($c = 0$). Then the $n$-derived category $D^n(A_n)$ is a categorical resolution of singularities of the ordinary derived category $D(A_n)$.
\end{proposition}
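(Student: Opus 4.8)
The plan is to verify the two defining properties of a categorical resolution in the sense of \cite{Kuznetsov_2014}: an adjoint pair $(\pi^{*},\pi_{*})$ with $\pi_{*}\colon D^{n}(A_{n})\to D(A_{n})$ whose unit $\operatorname{id}\to\pi_{*}\pi^{*}$ is invertible on $D^{\mathrm{perf}}(A_{n})$, together with smoothness of $D^{n}(A_{n})$. (That something is genuinely needed is clear: for $n\geq 1$ the ring $R_{n}=\kn$ is singular over $k$, so $D(A_{n})$ is not smooth.) The adjoint pair is essentially formal. Since $c=0$, $A_{n}$ is an honest dg algebra, and any $A_{n}$-module whose $t$-adic graded quotients are acyclic is itself acyclic (it carries a finite filtration by dg submodules $t^{i}M$ with acyclic subquotients); hence the subcategory defining $D^{n}(A_{n})$ lies inside the acyclic $A_{n}$-modules, so there is a canonical coproduct-preserving Verdier localization $\pi_{*}\colon D^{n}(A_{n})\to D(A_{n})$. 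For its left adjoint I would take $\pi^{*}:=Q_{n}\circ\mathbf{p}$, with $\mathbf{p}$ h-projective resolution and $Q_{n}\colon\hot(A_{n})\to D^{n}(A_{n})$ the quotient functor. Because h-projectives are left orthogonal to all acyclics, hence to the smaller subcategory defining $D^{n}(A_{n})$, a standard roof computation gives $\Hom_{D^{n}(A_{n})}(\pi^{*}X,M)\cong\Hom_{\hot(A_{n})}(\mathbf{p}X,M)\cong\Hom_{D(A_{n})}(X,\pi_{*}M)$, so $\pi^{*}\dashv\pi_{*}$, and $\pi_{*}\pi^{*}X$ is the image of $\mathbf{p}X$ in $D(A_{n})$, namely $X$ itself since $\mathbf{p}X\to X$ is a quasi-isomorphism. (In fact the unit is invertible on all of $D(A_{n})$; there is no clash with smoothness, as this localization does not preserve compact objects.)

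The substantive point is smoothness of $D^{n}(A_{n})$, which is where the hypothesis that $A$ is smooth enters. I would use the semiorthogonal decomposition $D^{n}(A_{n})=\langle\D_{0},\dots,\D_{n}\rangle$ of Theorem \ref{thm1}, with $\D_{i}\simeq D(A)$, obtained by iterating the recollement \eqref{eqrecol}. Choosing compact generators $G_{i}\in\D_{i}$ presents $D^{n}(A_{n})$ as $D(\Gamma_{n})$ for an upper triangular dg algebra $\Gamma_{n}=\bigoplus_{i\leq j}\Gamma_{ij}$ with diagonal blocks $\Gamma_{ii}\simeq A$ and off-diagonal gluing blocks $\Gamma_{ij}$, which are $A$-bimodules. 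By the standard smoothness criterion for glued (triangular) dg algebras, $D(\Gamma_{n})$ is smooth as soon as each $\Gamma_{ii}$ is smooth---true by hypothesis---and each $\Gamma_{ij}$ is perfect over $A\otimes_{k}A^{op}$. Thus everything reduces to identifying the gluing bimodules of \eqref{eqrecol} and checking they are perfect $A$-bimodules. Since $c=0$ gives $A_{n}=A\otimes_{k}R_{n}$, the cleanest route is probably to establish an equivalence $D^{n}(A_{n})\simeq D(A\otimes_{k}\Lambda_{n})$, where $\Lambda_{n}$ is the dg algebra with $D^{n}(R_{n})\simeq D(\Lambda_{n})$, and to invoke that a tensor product over a field of smooth dg algebras is smooth; this reduces smoothness of $D^{n}(A_{n})$ to smoothness of $A$ (given) together with smoothness of $\Lambda_{n}$.

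The main obstacle is precisely this last reduction: establishing the equivalence $D^{n}(A_{n})\simeq D(A\otimes_{k}\Lambda_{n})$ and showing $\Lambda_{n}$ is smooth---equivalently, computing the gluing bimodules of the recollement \eqref{eqrecol} explicitly and verifying they are perfect. This is the step that genuinely uses smoothness of $A$, since perfectness over $A\otimes_{k}A^{op}$ of a bimodule built from the diagonal is exactly what smoothness of $A$ provides, via base change, rather than only that $A$ is a dg algebra. With this in hand, assembling the pieces---$D^{n}(A_{n})$ smooth, the localization $\pi_{*}$, its left adjoint $\pi^{*}$, and $\pi_{*}\pi^{*}\cong\operatorname{id}$---yields exactly the data of a categorical resolution of $D(A_{n})$ in the sense of \cite{Kuznetsov_2014}.
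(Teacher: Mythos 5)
Your overall architecture matches the paper's: a fully faithful functor $D(A_n)\to D^n(A_n)$ (your $\pi^*$, which is the paper's Corollary \ref{embedding}) plus smoothness of $D^n(A_n)$ via the semiorthogonal decomposition into copies of $D(A)$, realized as the derived category of an upper-triangular dg algebra whose diagonal blocks are quasi-isomorphic to $A$, with everything reducing to perfectness of the off-diagonal gluing bimodules. However, you stop exactly at the decisive step: you write that ``the main obstacle is \ldots computing the gluing bimodules of the recollement explicitly and verifying they are perfect,'' and you do not carry this out. That computation is the actual content of the proposition, so as it stands the argument has a genuine gap rather than being a complete proof. Your suggested alternative reduction via an equivalence $D^n(A_n)\simeq D(A\otimes_k\Lambda_n)$ is likewise only sketched and not established.

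For the record, the paper closes this gap (in the case $n=1$, the general case being an induction) by a short explicit computation. Writing $E=\bigl[\begin{smallmatrix} A & 0 \\ X & E_1\end{smallmatrix}\bigr]$ with $X=\Hm{A_1}(G_0,G_1)$, one first invokes a lemma from the moduli-of-objects literature reducing perfectness of $X$ as an $A$-$E_1$-bimodule (given that $A$ is smooth) to perfectness of the underlying $A$-module; this is precisely where smoothness of $A$ is used, as you correctly anticipated. Then one identifies $X\cong\Ker t_{G_1}\cong\operatorname{coCone}\bigl(\Ker t_{\Gamma_1}\to\Ker t_{\Gamma_1/t\Gamma_1}\bigr)$, observes that $\Ker t_{\Gamma_1}$ is contractible and $\Ker t_{\Gamma_1/t\Gamma_1}=\Gamma_1/t\Gamma_1$ is the cone of $\frac{c}{t}\colon A[-1]\to A[1]$, and concludes that $X$ sits in a triangle $A\to X\to A\to A[1]$ in $\hot(A)$, hence is perfect over $A$. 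Supplying this computation (or an equivalent identification of the gluing bimodule) is what your proposal is missing.
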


This result offers a novel perspective on our construction: indeed for a curved deformation $A_n$, while there is no good notion of a derived category, there exists a perfectly well-defined ``resolution'' of the nonexistent derived category of $A_n$.
Note that the relevance of filtered modules to categorical resolutions is already clear from \cite{KaKu}, and a construction similar to ours was recently carried out in \cite{dedeyn2023categorical}. However, establishing the precise relation in the geometric case requires a more careful analysis of the gluing bimodules associated to our decomposition.

The category of $A_n$-modules allows for an explicit construction of semifree (and a fortiori $n$-homotopy projective) resolutions (see \S \ref{parres}), and in fact has a ``projective'' model structure (see \S \ref{parmodel}). These results are obtained along the lines of \cite{Derivingdgcat} and \cite{relative_model} respectively. It also has homotopy injective resolutions, the construction of which is detailed in the appendix \ref{secinj}.

It is well known that in the case of formal deformations, one should look at either torsion or complete derived categories rather than ordinary ones \cite{TARRIO19971,Dwyer2002CompleteMA, HomologyComp, POSITSELSKIMGM, LowenCurvature}. In \S \ref{partorsion}, we extend most of our results to the formal setting of a curved $k[[t]]$-deformation $A_t$ of $A$ by introducing the \emph{$t$-derived category of torsion modules} $\tor{D^t(A_t)}$ (Definition \ref{deftderived}), which constitutes an $\infty$-categorical colimit of the $n$-derived categories.

Our final main goal in the present paper is to compare our filtered derived categories with Positselski's semiderived category from \cite{Positselski_2018}. Recall that for an uncurved deformation $A_n$,
the semiderived category $\dsi(A_n)$ allows for an embedding
$D(A_n) \rightarrow \dsi(A_n)$ of the ordinary derived category. In the formal case, one actually has an equivalence of categories $\tor{D(A_t)} \cong \dsi(A_t^{\operatorname{co}})$ so\footnote{We denote by $\dsi(A_t^{\operatorname{co}})$ the semiderived category of $A_n$ with comodule coefficients, in the terminology of \cite{Positselski_2018}.} in this case, the semiderived category does generalize the kind of derived category one traditionally considers in the uncurved formal setup. It is highly likely that $\dsi(A_t^{\operatorname{co}})$ is further equivalent to the torsion derived category obtained in \cite{LowenCurvature} after uncurving, which is always possible in the formal case for a dga which is cohomologically bounded above (see also \cite{Blanc_Katzarkov_Pandit_2018} for a treatment in the language of formal moduli problems introduced in  \cite{DAGX}).

In the present paper, we show (Corollaries \ref{corsemider} and \ref{semiform}):
\begin{theorem}
There are admissible embeddings $\dsi(A_n) \rightarrow D^n(A_n)$ in the infinitesimal setup, and a  left admissible embedding $\dsi(A_t^{\operatorname{co}}) \rightarrow \tor{D^t(A_t)}$ in the formal setup.
\end{theorem}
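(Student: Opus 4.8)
The plan is to present both $\dsi(A_n)$ and $D^n(A_n)$ as Verdier localizations of the single homotopy category $\hot(A_n\Mod)$ and to compare the two kernels. Write $\N$ for the subcategory of $n$-acyclic modules, so that $D^n(A_n)=\hot(A_n\Mod)/\N$ by Definition~\ref{deffiltered}, and $\cS$ for Positselski's subcategory of $R_n$-semiacyclic (``$R_n$-coacyclic'') modules, so that $\dsi(A_n)=\hot(A_n\Mod)/\cS$ by \cite{Positselski_2018}. The first step is the inclusion $\N\subseteq\cS$. If $M$ is $n$-acyclic, then each associated graded piece $\operatorname{gr}^iM=t^iM/t^{i+1}M$ is an acyclic complex over $A=A_n/tA_n$, hence is $R_n$-semiacyclic (the class $\cS$ contains the acyclic $A$-modules, since $D(A)$ is precisely the part of $\dsi(A_n)$ supported over the closed fiber); and, $\cS$ containing all coacyclic modules and in particular the total complexes of short exact sequences of $A_n$-modules, it obeys two-out-of-three along such sequences. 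Climbing the finite filtration $0=t^{n+1}M\subseteq\cdots\subseteq tM\subseteq M$ then gives $M\in\cS$. Consequently the identity functor descends to a Verdier quotient $Q_n\colon D^n(A_n)\to\dsi(A_n)$, and the embedding we are after will be realised as the fully faithful right adjoint of $Q_n$, with essential image $\K_n^{\perp}$, where $\K_n:=\Ker(Q_n)$ is the image of $\cS$ in $D^n(A_n)$.

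For this right adjoint to exist — and, in the infinitesimal case, to assemble into a two-sided admissible embedding — I would show that $\K_n$ is generated, as a localizing subcategory of $D^n(A_n)$, by a set of compact objects. The natural candidates are the total complexes $T_i$ of the short exact sequences $0\to t^iA_n/t^{i+1}A_n\to A_n/t^{i+1}A_n\to A_n/t^iA_n\to 0$ for $i=1,\dots,n$ (and, in the genuinely curved case, their twists by a semifree resolution of the curvature from \S\ref{parres}): all three terms are built from the compact generators of $D^n(A_n)$ furnished by Theorem~\ref{thm1}, so $T_i$ is compact; $T_i$ is acyclic, hence lies in $\cS$, but its reduction modulo $t$ is not acyclic, so $T_i\notin\N$ and $T_i$ is a genuine nonzero object of $\K_n$. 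Granting that the $T_i$ generate all of $\K_n$, the localization theorem for compactly generated triangulated categories supplies the fully faithful right adjoint $\dsi(A_n)\hookrightarrow D^n(A_n)$ whose left adjoint is $Q_n$; this already makes the embedding left admissible. Compact generation of $\K_n$ moreover makes the localization smashing, so $\K_n^{\perp}\simeq\dsi(A_n)$ is itself compactly generated by objects that remain compact in $D^n(A_n)$, hence is coreflective as well, which produces the right adjoint of the embedding and yields Corollary~\ref{corsemider}.

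For the formal case I would pass to the colimit. By \S\ref{partorsion} one has $\tor{D^t(A_t)}=\hocolim_n D^n(A_n)$ along the functors $\iota_n$ of \eqref{eqrecol}, and $\dsi(A_t^{\operatorname{co}})$ is correspondingly the colimit of the $\dsi(A_n)$. Checking that the $Q_n$ are compatible with the transition functors — here one uses that $\iota_n$ is the inclusion of the modules with $t^nM$ acyclic, a condition tested on the same truncations as semiacyclicity — lets them assemble into a quotient $Q\colon\tor{D^t(A_t)}\to\dsi(A_t^{\operatorname{co}})$ whose kernel $\K$ is $\hocolim_n\K_n$, still generated by compacts. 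This gives the fully faithful right adjoint, i.e.\ the left admissible embedding of Corollary~\ref{semiform}. It is only left, not right, admissible here because the missing right adjoint of the embedding would require $\K^{\perp}$ to be compactly generated inside $\tor{D^t(A_t)}$, and — exactly as $A_t$ fails to generate the torsion derived category — the relevant completion-type functor, unlike the torsion functor, does not commute with the colimit.

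The step I expect to be the main obstacle is the second one: matching Positselski's rather inexplicit semiacyclicity class $\cS$ against the concrete generators and semifree resolutions coming from Theorem~\ref{thm1} and \S\ref{parres}, that is, proving that the $T_i$ (and their curvature twists) generate all of $\K_n$ and that nothing further is needed. Everything downstream is then formal localization theory together with the colimit bookkeeping for the formal case.
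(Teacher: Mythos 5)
Your overall strategy---realize both categories as localizations of $\hot(A_n)$, show the kernels are nested, and then produce the adjoints from compact generation of $\K_n=\Ker(D^n(A_n)\to\dsi(A_n))$---is genuinely different from the paper's, and its central step is not merely ``the main obstacle'' you flag but a gap that your proposed generators cannot close as written. In the curved case the modules $A_n/t^iA_n=A_i$ are not objects of $A_n\Mod$ at all (this is stressed right after Example \ref{uncurved}: for $c\neq 0$ none of the $A_i$ except $A_0$ carry a cdg structure), so the totalizations $T_i$ do not exist, and the parenthetical ``twist by a semifree resolution of the curvature'' is not a construction. Even in the uncurved case, the coacyclic (resp.\ contraacyclic) closure is the localizing subcategory generated by totalizations of \emph{all} short exact sequences, and no argument is offered that your finitely many $T_i$ generate it. The paper never proves, needs, or claims compact generation of $\K_n$: instead it constructs the two adjoints explicitly as Positselski's free and cofree resolutions, $\mathbf{fr}\colon M\mapsto M^{\operatorname{fr}}$ (left adjoint to $D^n(A_n)\to\dsi(A_n^{\operatorname{contra}})$) and $\mathbf{cf}\colon M\mapsto M^{\operatorname{cofr}}$ (right adjoint to $D^n(A_n)\to\dsi(A_n^{\operatorname{co}})$), verifying the orthogonality $\Hm{D^n(A_n)}(F,S)=0$ for $F$ an $R_n$-free module and $S$ semiacyclic by a factorization argument borrowed from \cite{Positselski_2018} and then invoking Proposition \ref{krause}; admissibility of the embedding follows because both adjoints have the same essential image, the $R_n$-free part $\varphi(\dsi(A_n^{R_n\operatorname{-fr}}))$ (Corollaries \ref{corsemider} and \ref{corsemi}). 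Note also that your two quotients $\dsi(A_n^{\operatorname{co}})$ and $\dsi(A_n^{\operatorname{contra}})$ have a priori different kernels in $D^n(A_n)$; identifying them is part of what Positselski's comparison theorem and the paper's Corollary \ref{corsemi} accomplish, and your single ``$\K_n$'' elides this.

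Two further points. First, your proof of $\N\subseteq\cS$ rests on the claim that an acyclic $A$-module, restricted along $A_n\to A$, is semiacyclic; the justification offered (``$D(A)$ is the part of $\dsi(A_n)$ supported over the closed fiber'') is not an argument and, if unpacked, essentially requires the same machinery as the paper's direct proof of Proposition \ref{nacysacy}, which computes the derived functors $L^iQ$ of $\Coker t$ on an $n$-acyclic module and runs a spectral sequence for $Q(M^{\operatorname{fr}})$. Second, in the formal case the identification of $\dsi(A_t^{\operatorname{co}})$ with the colimit of the $\dsi(A_n)$ is asserted without proof and is not used by the paper, which instead reruns the cofree-resolution argument for torsion modules (Proposition \ref{semiform}). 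The concrete reason only \emph{left} admissibility survives there is that Positselski's free resolution $M^{\operatorname{fr}}=\Tot^\Pi(\cdots)$ is a countable product of free modules and hence leaves the torsion category, so the contramodule-side adjoint is unavailable---not the vaguer failure of a ``completion-type functor'' to commute with the colimit.
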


Finally, it is a natural question to ask how the definition of the $n$-derived category generalizes to deformations over local artinian rings which have dimension higher than $1$, or to objects with curvature which is small with respect to some filtration - for example Fukaya type categories \cite{FOOO1} or algebras associated to non-flat vector bundles \cite{BANDIERA2023109358}. The obvious proposal is to simply quotient out from the homotopy category the modules which are filtered acyclic; in the local artinian setting, this corresponds to considering the $\mathfrak{m}$-adic filtration, where $\mathfrak{m}$ is the unique maximal ideal. It turns out however that for most technical results it is very important that the ideal defining the filtration is principal, and without this assumption many arguments break down. It is therefore likely that a more refined approach is required for the general case.

\vspace{8pt}

\noindent \emph{Acknowledgements.}
The first named author thanks Nicolò Sibilla for the many interesting conversations. He is also grateful to Leonid Positselski for the encouragement and to Bernhard Keller, Matt Booth, Emanuele Pavia, Timothy De Deyn, Barbara Fantechi, Dmitry Kaledin, Alexander Kuznetsov and Marco Manetti for the insightful discussions. 
\section{Preliminaries}\label{parprelim}
\subsection{Conventions}
All graded objects are assumed to be graded by the integers; we employ cohomological grading, i.e. differentials increase the degree. All rings and algebras are associative and unital. If $\C$ is a dg category, we will denote by $Z^0\C$ it's underlying category, and with $H^0\C$ its homotopy category; the complex of morphisms in the category $\C$ will be denoted $\Hm{\C}(X,Y)$. If $\T$ is a triangulated category linear over a ring $R$, with $\Hm{\T}(X,Y)$ we will denote the $R$-module of morphisms in $\T$.
\subsection{Triangulated and derived categories}
We begin by recalling some basic facts about derived and triangulated categories, as well as fixing some notation; for a comprehensive introduction see  \cite{krause2009localization} or the first section of \cite{Positselski_2011}.
\subsubsection{Localizations of triangulated categories}
 Let $\T$ be a triangulated category admitting small coproducts and $\cS\subseteq \T$ a thick subcategory, i.e. a triangulated subcategory closed under retracts. The subcategory $\prescript{\perp}{}{\cS}$ is defined as the full subcategory of all objects $X$ such that $\Hm{\T}(X,Y)=0$ for all $Y\in \cS$; analogously, $\cS^\perp \subseteq \T$ is given by the objects $Y$ for which $\Hm{\T}(X,Y)=0$ for all $X\in \cS$. We will denote by $\T/\cS$ the Verdier quotient of $\T$ by $\cS$. The following result is classical, see for example \cite[Lemma 4.8.1, Proposition 4.9.1]{krause2009localization} for a proof.
\begin{proposition}\label{krause}
    Let $X, Y \in \T$. If either $X\in \prescript{\perp}{}{\cS}$ or $Y\in \cS^\perp$, then the natural map \[\Hm{\T}(X,Y) \to \Hm{\T/\cS}(X,Y)\]  is an isomorphism. Moreover, the following are equivalent:
    \begin{itemize}
        \item The quotient functor $\T\to \T/\cS$ admits a left adjoint, which is automatically fully faithful;
        \item for any $X\in \T$, there exists a triangle in $\T$ \[P\to X \to S \to P[1]\] with $P\in  \prescript{\perp}{}{\cS}$ and $S\in \cS$.
        \item The composition \[
        \prescript{\perp}{}{\cS} \hookrightarrow \T \to \T/\cS
        \] is an equivalence. 
    \end{itemize}
\end{proposition}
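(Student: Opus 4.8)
The plan is to deduce everything from the standard calculus of fractions that computes morphisms in the Verdier quotient. Since $\cS$ is a thick triangulated subcategory, the class $\Sigma$ of morphisms $s$ in $\T$ with $\Cn(s)\in\cS$ is a multiplicative system compatible with the triangulation (satisfying both Ore conditions), so that $\T/\cS$ is the localization $\Sigma^{-1}\T$: a morphism $X\to Y$ in $\T/\cS$ is represented either by a roof $X\xleftarrow{s}X'\xrightarrow{f}Y$ with $s\in\Sigma$, or dually by a co-roof $X\xrightarrow{f}Y'\xleftarrow{t}Y$ with $t\in\Sigma$, two fractions being identified under the usual domination relation; moreover $s\in\Sigma$ precisely when its image in $\T/\cS$ is invertible, and $\ker(Q\colon\T\to\T/\cS)=\cS$ (this is where thickness enters). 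This is the only ingredient requiring genuine work, and I would take it from \cite{krause2009localization}; everything below is then formal.

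For the isomorphism $\Hom_\T(X,Y)\to\Hom_{\T/\cS}(X,Y)$, suppose first $Y\in\cS^{\perp}$. Any $s\colon X'\to X$ in $\Sigma$ fits in a triangle $C[-1]\to X'\xrightarrow{s}X\to C$ with $C\in\cS$; applying $\Hom_\T(-,Y)$ and using $\Hom_\T(C,Y)=\Hom_\T(C[-1],Y)=0$ shows that $s^{\ast}\colon\Hom_\T(X,Y)\to\Hom_\T(X',Y)$ is bijective. Hence every roof $X\xleftarrow{s}X'\xrightarrow{f}Y$ agrees in $\T/\cS$ with $Q(h)$, where $h\in\Hom_\T(X,Y)$ is the unique morphism with $hs=f$, which gives surjectivity; and if $Qh=0$ then $hu=0$ for some $u\in\Sigma$ with target $X$, whence $h=0$ by the injectivity of $u^{\ast}$, giving injectivity. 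The case $X\in{}^{\perp}\cS$ is the exact mirror image, using co-roofs and applying $\Hom_\T(X,-)$ to triangles of morphisms in $\Sigma$.

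For the three equivalent conditions I would run the cycle $(1)\Rightarrow(2)\Rightarrow(3)\Rightarrow(1)$, writing $\iota\colon{}^{\perp}\cS\hookrightarrow\T$ for the inclusion. $(1)\Rightarrow(2)$: given a left adjoint $L\dashv Q$, automatically fully faithful (see \cite{krause2009localization}), the unit $\eta\colon\id_{\T/\cS}\to QL$ is invertible, so by a triangle identity $Q\epsilon_X=\eta_{QX}^{-1}$ is invertible; completing the counit to a triangle $LQX\xrightarrow{\epsilon_X}X\to S\to LQX[1]$ and applying $Q$ gives $QS=0$, i.e.\ $S\in\cS$, while $\Hom_\T(LQX,Y)\cong\Hom_{\T/\cS}(QX,QY)=0$ for $Y\in\cS$ shows $P:=LQX\in{}^{\perp}\cS$. $(2)\Rightarrow(3)$: a triangle $P\to X\to S\to P[1]$ with $S\in\cS$ gives $QP\xrightarrow{\sim}QX$, so $Q\iota$ is essentially surjective, and it is fully faithful by the $\Hom$-isomorphism of the previous paragraph (its source lying in ${}^{\perp}\cS$), hence an equivalence. $(3)\Rightarrow(1)$: choosing a quasi-inverse $G$ of $Q\iota$ and putting $L:=\iota G$, the same $\Hom$-isomorphism yields natural identifications $\Hom_\T(LZ,Y)=\Hom_\T(\iota GZ,Y)\cong\Hom_{\T/\cS}(Q\iota GZ,QY)\cong\Hom_{\T/\cS}(Z,QY)$, exhibiting $L\dashv Q$, and $L=\iota G$ is fully faithful as a composite of fully faithful functors. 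In this argument nothing beyond diagram-chasing with triangles and adjunctions is needed, so the single real obstacle remains the calculus of fractions itself --- establishing that $\Sigma$ is a compatible multiplicative system with $\ker Q=\cS$ --- which I do not reprove but take as classical.
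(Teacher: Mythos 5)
Your argument is correct; note that the paper does not prove this proposition at all but simply cites \cite[Lemma 4.8.1, Proposition 4.9.1]{krause2009localization}, and your proof is essentially a faithful rendition of the argument in that reference: the $\Hom$-isomorphism via bijectivity of $s^{\ast}$ (resp.\ $s_{\ast}$) on roofs, and the formal cycle $(1)\Rightarrow(2)\Rightarrow(3)\Rightarrow(1)$. The only ingredients you defer --- that $\Sigma$ is a compatible multiplicative system with $\ker Q=\cS$, and that an adjoint to a localization functor is automatically fully faithful --- are exactly the ones the paper also treats as classical, so the level of rigor matches.
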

The left adjoint to the quotient functor is constructed by associating to the object $X$ the object $P$ as per the triangle above. As a consequence of this construction, the essential image of the left adjoint is the subcategory $\prescript{\perp}{}{\cS}\subseteq \T$.  The dual statement about the existence of a right adjoint also holds with $\cS^\perp$ in place of $\prescript{\perp}{}{\cS}$ and the arrows in the triangle reversed. 

\subsubsection{Semiorthogonal decompositions}A \emph{semiorthogonal decomposition}  \[
\T=\langle \cS_0, \cS_1\rangle 
\] of a triangulated category $\T$ is by definition given by a pair of triangulated subcategories $\cS_0, \cS_1\subseteq \T$ such that $\Hm{\T}(S_1, S_0)=0$ for all $S_0\in \cS_0$ and $S_1\in \cS_1$ and such that for every $X\in \T$ there exists a triangle \[
S_1 \to X \to S_0 \to S_1[1]
\] with $S_0\in \cS_0$ and $S_1\in \cS_1$; the definition generalizes straightforwardly to decompositions having more that one factor. A triangulated subcategory $\cS\subseteq \T$ is said to be \emph{left admissible} if the inclusion admits a left adjoint, \emph{right admissible} if the inclusion admits a right adjoint and \emph{admissible} if it is both left and right admissible; any right admissible subcategory $\cS$ defines a semiorthogonal decomposition \[
\T=\langle \cS^\perp, \cS \rangle
\] and dually for left admissible subcategories.
\subsubsection{Compactly generated triangulated categories}
If $\T$ is a triangulated category admitting small coproducts, an object $C\in \T$ is said to be compact  if $\T(C,-)$ commutes with small coproducts. The category $\T$ is said to be \emph{compactly generated} if there exists a set $\C\subseteq \T$ of compact objects generating $\T$, i.e. such that for all $X\in \T$ one has \[
    X=0 \Leftrightarrow \Hm{\T}(C[n], S)=0 \text{ for all } n\in \mathbb{Z} \text{ and all } C \in \C. 
    \]
A triangulated subcategory is said to be localizing if it is closed under small coproducts. It can be proven (\cite[Lemma 2.2.1]{schwedestablemodel}) that, provided that $\C\subseteq\T$ is composed of compact objects, the condition that $\C$ generates $\T$ is equivalent to the fact that $\T$ coincides with the minimal localizing subcategory containing $\C$.
\subsubsection{Derived categories of dg algebras}
Let $A$ be a dg algebra. We will denote by $A\Mod$ the dg category of left $A$-modules and with $\hot(A)$ the homotopy category $H^0A\Mod$. The category $\hot(A)$ has the structure of a triangulated category, with triangles given by graded split short exact sequences i.e. short exact sequence that split as sequences of graded $A$-modules. The derived category $D(A)$ is the quotient of $\hot(A)$ by the subcategory $\operatorname{Ac}\subseteq \hot(A)$ given by the acyclic $A$-modules. An $A$-module $M$ is said to be \emph{homotopy projective} if it lies in $\prescript{\perp}{}{\operatorname{Ac}}$ and \emph{homotopy injective} if it lies in $\operatorname{Ac}^\perp$; the quotient $\hot(A)\to D(A)$ admits both a left adjoint $\mathbf{p}$ and a right adjoint $\mathbf{i}$ which are fully faithful and whose essential images are given respectively by the homotopy projective and homotopy injective modules.
The derived category is always compactly generated; we include the elementary proof of this fact since it will serve as a blueprint for the proof in the curved case.
\begin{proposition}
    The free module $A\in A\Mod$ is a homotopy projective compact generator of $D(A)$.
\end{proposition}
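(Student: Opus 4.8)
The plan is to verify the three assertions separately: that $A$ is compact, that $A$ is homotopy projective, and that $A$ generates $D(A)$.

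First, for any $A$-module $M$ one has a natural isomorphism $\Hm{\hot(A)}(A, M) \cong H^0(M)$, since a degree-zero chain map $A \to M$ is determined by the image of $1$, which must be a cocycle, and null-homotopies correspond exactly to coboundaries. Because cohomology and degree-zero cocycles/coboundaries all commute with arbitrary coproducts of complexes, the functor $\Hm{\hot(A)}(A, -)$ commutes with small coproducts on $\hot(A)$; the same then holds in $D(A)$ once we know $A$ is homotopy projective, since by Proposition \ref{krause} the mapping spaces out of $A$ are unchanged by passing to the Verdier quotient, and coproducts in $D(A)$ are computed in $\hot(A)$. Hence $A$ is compact.

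Next, $A$ is homotopy projective, i.e. $A \in \prescript{\perp}{}{\operatorname{Ac}}$: if $N$ is acyclic then $\Hm{\hot(A)}(A, N) \cong H^0(N) = 0$, and replacing $N$ by its shifts gives $\Hm{\hot(A)}(A[n], N) = 0$ for all $n$. This also shows, via the first paragraph, that $\Hm{\hot(A)}(A[n], M) \cong \Hm{D(A)}(A[n], M)$ for all $M$ and all $n \in \mathbb{Z}$.

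Finally, $A$ generates $D(A)$: suppose $M \in D(A)$ satisfies $\Hm{D(A)}(A[n], M) = 0$ for all $n \in \mathbb{Z}$. By the identification just made, this says $H^n(M) \cong \Hm{\hot(A)}(A[n], M) = 0$ for all $n$, so $M$ is acyclic, hence $M = 0$ in $D(A)$. Thus $\{A\}$ is a set of compact objects satisfying the generation criterion, and $D(A)$ is compactly generated by $A$.

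The only genuinely delicate point is the interplay between the two statements about $A$: compactness in $D(A)$ is phrased in terms of coproducts in $D(A)$, and to reduce this to the easy computation in $\hot(A)$ one needs that coproducts in $D(A)$ agree with those in $\hot(A)$ on the homotopy projective part and that $\Hm{}(A,-)$ is insensitive to the localization — both of which follow from $A$ being homotopy projective together with Proposition \ref{krause}. Organizing the argument so that homotopy projectivity is established before compactness is invoked is the main thing to get right; everything else is the standard dictionary between chain maps out of the free module and cohomology.
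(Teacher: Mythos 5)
Your argument is correct and follows the same route as the paper's proof: use the isomorphism $\Hm{\hot(A)}(A[n],M)\cong H^{-n}M$ to get homotopy projectivity, deduce that $\Hm{\hot(A)}(A,-)$ computes morphisms in $D(A)$, conclude generation, and get compactness from the fact that cohomology commutes with coproducts. The extra care you take about coproducts in $D(A)$ versus $\hot(A)$ is a fine (and valid) elaboration of a point the paper leaves implicit.
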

\begin{proof} 
    Recall the isomorphism \[
    \Hm{A\Mod}(A,M)\cong M
    \] for any $A$-module $M$; if $M$ is acyclic it's clear that \[\Hm{\hot(A)}(A[n],M)\cong H^{-n}M=0\] so $A$ is homotopy projective and $\Hm{\hot(A_n)}(A,-)$ computes $\Hm{D(A)}(A,-)$. Therefore, if $M$ is such that $\Hm{D(A)}(A[n], M)$ for all $n$, it follows that $\Hm{\hot(A)}(A[n], M)\cong H^{-n}M=0$ and $M$ is acyclic. Finally, $A$ is compact since taking cohomology commutes with coproducts.
\end{proof}

Any morphism $f\colon A\to B$ of differential graded algebras induces the restriction of scalars dg functor \[\Res_f\colon B\Mod\to A\Mod\]which admits a left adjoint $\Ind_f$ computed as $\Ind_f M=B\otimes_A M$. The functor $\Res_f$ preserves acyclic modules so defines a functor $D(B)\to D(A)$, denoted again with $\Res_f$. The functor $\Ind_f$ does not preserve acyclic modules in general but admits a left derived functor $\Ind_f^{\operatorname{L}} \colon D(A)\to D(B)$ defined by precomposing $\Ind_f$ with $\mathbf{p}$, which is a left adjoint to $\Res_f$.
The morphism $f$ is said to be a derived equivalence if the adjoint pair 
\[\begin{tikzcd}
	{D(A)} & {D(B)}
	\arrow["{\Res_f}"', shift right, from=1-2, to=1-1]
	\arrow["{\Ind_f^{\operatorname{L}}}"', shift right, from=1-1, to=1-2]
\end{tikzcd}\] is an equivalence.
\subsection{Curved algebras}
Let $R$ be a commutative ring.
\begin{definition}[\cite{Positselski_2018}]
A \emph{cdg algebra} $\A$ over $R$ is given by a graded $R$-algebra $\A^\#$ equipped with a derivation $d_\A\in \Hom_R(\A^\#,\A^\#)$ of degree $1$ and an element $c\in A^2$ such that $d_\A(c)=0$ and $d^2_\A=[c, -]$.
\end{definition}
The derivation $d_\A$ is called predifferential and the element $c$ curvature of the algebra. Any dg algebra is in a natural way a cdg algebra by letting $c=0$.
\begin{definition}
    A left \emph{cdg module} over a cdg algebra $\A$ is a graded left $\A^\#$-module $M^\#$ equipped with an $\A^\#$-derivation $d_M\in \Hom_R(M^\#,M^\#)^1$ such that $d^2_M m= cm$ for all $m\in M$. A right cdg $\A$-module is a right graded $\A^\#$-module $N$ with a derivation $d_N\in \Hom_R(N,N)^1$ such that $d^2_N n=- nc$ for all $n\in N$.
\end{definition}
Crucially, as soon as the curvature is not zero, the algebra is not a module over itself; the nonexistence of free modules is one of the main features of our categories of curved modules.

\begin{definition}
    If $M$ and $N$ are cdg left $\A$-modules, the complex $\Hm{\A}(M, N)$ of $\A$-linear morphisms  is defined in the same way as the complex of morphisms between dg modules: it has in degree $n$ the $R$-module of $\A^\# $-module 
   morphisms of degree $n$, and for $f\in \Hom_\A(M,N)^n$ differential defined as $(df)m=d_N (fm)- (-1)^n f(d_M m)$. Even if $M$ and $N$ are not themselves complexes, the space of morphisms is one (i.e. satisfies $d^2=0)$. The dg category $\A\Mod$ has as objects the cdg left $\A$-modules and the just defined complex of morphisms as hom-complex; composition is defined in the obvious way. The same definitions work for right modules, giving rise to the dg category $\RMod\A$ of right cdg $\A$-modules.
\end{definition}
To simplify the notation, we write $\Hm{\A}(M,N)$ instead of $\Hm{\A\Mod}(X,Y)$ for the complex of morphisms inside the dg category $\A\Mod$.
From now on, all modules, unless otherwise specified, will be assumed to be left modules.
\begin{example}[Matrix factorizations]
Let $A$ be any $R$-algebra and $f$ an element in its center. Consider the graded algebra $A[u, u^{-1}]$ with $u$ in degree $2$; this can be made into a cdg algebra $\B$ with zero differential and curvature $c_\B=fu$. Then by definition, a cdg $\B$-module $M$ is given by a 2-periodic graded object \[
\ldots \to M_0 \tow{d_0} M_1 \tow{d_1} M_0 \tow{d_0} M_1 \to \ldots
\] such that $d_id_j=f\id_{M_i}$. These are the objects that are usually called \textit{matrix factorizations} for $f$, with the caveat that the modules appearing in matrix factorizations are usually assumed to be projective. For more about this construction, see e.g. \cite{compgenMF} or \cite{Efimov_2015} for the geometric case.
\end{example}
The category $\A\Mod$ is a pretriangulated dg category; we will denote by $\hot(\A)$ the triangulated category $H^0\A\Mod$. Like in the classical case, triangles in $\hot(\A)$ are graded split short exact sequences. 
\begin{definition}
    Let $\A$ and $\B$ be curved differential graded algebras. A morphism $f\colon \A\to \B$ of cdg algebras is a map \[
    f\colon \A^\# \to \B^\#
    \] of graded algebras which commutes with the predifferentials and carries the curvature of $\A$ to the curvature of $\B$. Just like in the dg case, one has the usual restriction-extension adjoint pair of dg functors between $\A\Mod$ and $\B\Mod$. The restriction of scalars $\Res_f$ is defined in the obvious way and the extension $\Ind_f$ is computed again as $\Ind_f M=\B\otimes_{\A}M$.
\end{definition}
\subsection{Deformations}
Fix a commutative ring $k$. Let $A$ be a dg algebra over $k$. denote by $R_n$ the commutative $k$-algebra $\kn$.
\begin{definition}
    A \emph{cdg deformation} $A_n$ of $A$ of order $n$ is the datum of a structure of a cdg algebra on the $R_n$-module $A\otimes_k R_n\cong A[t]/(t^{n+1})$ which reduces modulo $t$ to the dg algebra structure of $A$.
\end{definition}
Alternatively, as usual one may define a cdg deformation of $A$ of order $n$ to be an $R_n$-free cdg algebra $A_n$ equipped with a dg algebra isomorphism $A_n\otimes_{R_n}k\cong A$. For our purposes the slightly more restrictive definition we gave suffices, but everything we show can be readily carried over to the more general setting.

\begin{remark}
 Usually in deformation theory one considers deformations over arbitrary local artinian algebras, of which the algebras $R_n$ are one example. However the general case makes the theory developed in this paper become significantly more complicated, and we will limit ourselves to the already interesting case of $R_n$.
\end{remark}
     
We will denote by $c$ the curvature of $A_n$ and with $d_{A_n}$ its predifferential. The multiplication of $A_n$ will be denoted with the juxtaposition.

 \subsubsection*{Adjoint functors}For all $0\leq m\leq n$, denote by $A_m$ the $R_m$-algebra \[A_n \otimes_{R_n}R_m\cong \frac{A_n}{t^{m+1}A_n},\] where the $R_n$-action on $R_m$ is defined via the natural surjection $R_n\to R_m$; we will say that $A_m$ is the deformation of order $m$ of $A$ induced by $A_n$. 

The surjection $R_n\to R_m$ induces a surjection
  $A_n\to A_m$,
defining via restriction of scalars the fully faithful forgetful functor \[
F\colon A_m\Mod\to A_n\Mod.
\] This has a left adjoint \[\begin{split}
\Coker t^{m+1}\colon A_n\Mod&\to A_m\Mod\\
M &\to \frac{M}{t^{m+1}M}\cong A_m \otimes_{A_n} M
\end{split}\]
and a right adjoint
\[\begin{split}
\Ker t^{m+1} \colon A_n\Mod&\to A_m\Mod\\
M &\to \Ker t^{m+1}_M\cong \Hm{A_n}(A_m,M),
\end{split}\]
where we have denoted with $t_M$ the action of $t$ on $M$.

\section{The $n$-derived category}\label{parn}
In this section we will introduce the main object studied in this paper (Definition \ref{deffiltered}). From this point onward, let $A$ be a fixed dg algebra over $k$ and $A_n$ a cdg deformation. Given a cdg $A_n$-module $M$, we can define both the $t$-adic filtration \[
0=t^{n+1}M \subseteq t^n M \subseteq \ldots \subseteq tM \subseteq M  
\] and the $K$-filtration \[
0 \subseteq \Ker t_M \subseteq \ldots \subseteq \Ker t^n_M \subseteq \Ker t^{n+1}_M=M. \
\]  Since $c\in tM$, we have that $d^2_M (t^iM)\subseteq t^{i+1}M$ so the successive quotients with respect to the $t$-adic filtration  have induced predifferentials squaring to zero i.e. are actual complexes; since $t\Ker t^i_M\subseteq \Ker t^{i-1}_M$, the same is true for the $K$-filtration. We will write $\Gr_t(M)$ for the associated graded with respect to the $t$-adic filtration, and $\Gr_K(M)$ for that with respect to the $K$-filtration; by the above discussion $\Gr_t(M)$ and $\Gr_K(M)$ are complexes even when $M$ is not. Moreover since $t^i_M$ is a morphism of $A_n$-modules, $t^iM$ and $\Ker t^iM$ are themselves $A_n$-modules. Then, since $t_M$ annihilates the graded pieces of either filtration, the $A_n$-module structure on those reduces to a natural $A$-module structure defining thus functors \[
\Gr_K(-), \Gr_t(-)\colon A_n\Mod \to A\Mod;
\]

Our notion of acyclicity for curved modules will involve the acyclicity of these objects. Explicitly, $\Gr_t(M)$ is acyclic if the $n+1$ complexes \[\Gr_t^i(M)=\frac{t^iM}{t^{i+1}M} \quad i\in {0, \ldots, n}\] are acyclic; analogously, $\Gr_K(M)$ is acyclic if the $n+1$ complexes \[\Gr_K^{i}(M)=\frac{\Ker t^{i+1}_M}{\Ker t^{i}_M}\cong t^{i}\Ker t^{i+1}_M \quad i\in {0, \ldots, n}\] are acyclic; the last isomorphism follows from the obvious fact that, since $\Ker t^n_M\subseteq \Ker t^{n+1}_M$, the kernel of the restricted action of $t^i_M$ on $\Ker t^{i+1}_M$ is still $\Ker t^i_M$.

\begin{lemma}\label{submodules}
        For any cdg $A_n$-module $M$, \[t^iM \cap \Ker t^j_M= t^i\Ker t^{i+j}_M \subseteq M.\]
\end{lemma}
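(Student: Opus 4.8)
The plan is to establish the two inclusions separately by direct manipulation of the $t$-action; nothing homological is involved, since the statement only concerns the underlying $R_n$-module structure of $M$, and the predifferential plays no role whatsoever.

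For the inclusion $t^i\Ker t^{i+j}_M \subseteq t^iM \cap \Ker t^j_M$, I would take a typical element $x = t^i y$ with $y \in \Ker t^{i+j}_M$. Then $x \in t^iM$ by construction, while $t^j x = t^{i+j} y = 0$ exhibits $x$ as an element of $\Ker t^j_M$ as well, giving the inclusion.

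For the reverse inclusion, the observation to exploit is that once $x \in t^iM$, \emph{any} choice of $y$ with $x = t^i y$ can be fed into the remaining condition. Concretely, given $x \in t^iM \cap \Ker t^j_M$, I would pick $y \in M$ with $x = t^i y$; then $0 = t^j x = t^{i+j} y$, so $y \in \Ker t^{i+j}_M$ and hence $x = t^i y \in t^i\Ker t^{i+j}_M$.

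There is no genuine obstacle here; the only point worth a second glance is the degenerate range $i + j \geq n+1$, in which $t^{i+j} = 0$ on $M$, so $\Ker t^{i+j}_M = M$ and both sides collapse to $t^iM$ — but the argument above already handles this case without any modification. Finally, one should recall (as already noted before the lemma, using that each power $t^\ell_M$ is $A_n$-linear) that all four subsets occurring are in fact $A_n$-submodules of $M$, so the asserted set-theoretic equality is automatically an equality of $A_n$-modules.
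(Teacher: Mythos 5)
Your proof is correct and is exactly the elementwise verification the paper has in mind (the paper's own proof consists of the single remark that this is ``a straightforward elementwise verification''). Both inclusions are handled properly, and your observation that any preimage $y$ of $x$ under $t^i$ works is the only point of substance.
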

\begin{proof} This is a straightforward elementwise verification.

\end{proof}
The following is a key fact:
\begin{proposition}\label{kacyatcy}
    For any $A_n$-module $M$, the complex $\Gr_t(M)$ is acyclic if and only if the complex $\Gr_K(M)$ is acyclic.
\end{proposition}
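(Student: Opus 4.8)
The plan is to route the comparison through short exact sequences of honest complexes of $A$-modules, thereby circumventing the fact that the intermediate modules $t^iM$ and $\Ker t^i_M$ are in general \emph{not} complexes: their induced predifferentials square to $c\cdot(-)$, which is only forced to take values in $tM$. This is precisely the reason the naive long exact sequence argument available in the uncurved case breaks down, and locating the right complexes to work with is the only real subtlety.

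First I would isolate the complexes that \emph{do} behave well. Writing $c = t\gamma$ with $\gamma \in A_n^2$ and using that $t$ is central, for $m \in \Ker t_M$ one has $d_M^2 m = cm = t\gamma m = \gamma(tm)=0$, so $\Ker t_M$ is a genuine complex. Inside it, Lemma~\ref{submodules} (with $j=1$) gives isomorphisms of complexes $\Gr_K^i(M) = \Ker t^{i+1}_M/\Ker t^i_M \cong t^i\Ker t^{i+1}_M = t^iM \cap \Ker t_M =: K_i$, and since $t^{j+1}M\subseteq t^jM$ these $K_i$ form a descending filtration $\Ker t_M = K_0 \supseteq K_1 \supseteq \dots \supseteq K_n \supseteq K_{n+1}=0$ by subcomplexes of $\Ker t_M$. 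A standard long exact sequence argument (descending induction on $i$) then shows that $\Gr_K(M)$ is acyclic if and only if each subquotient $G^i := K_i/K_{i+1}$ is acyclic.

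Next, for $i=0,\dots,n$, multiplication by $t$ is a surjective chain map $\Gr_t^i(M) = t^iM/t^{i+1}M \to t^{i+1}M/t^{i+2}M = \Gr_t^{i+1}(M)$ (with $\Gr_t^{n+1}(M)=0$), and I claim its kernel is exactly $G^i$: unwinding definitions, the kernel equals $(t^i\Ker t^{i+1}_M + t^{i+1}M)/t^{i+1}M$, which by the second isomorphism theorem and Lemma~\ref{submodules} is $\cong t^i\Ker t^{i+1}_M / (t^i\Ker t^{i+1}_M \cap t^{i+1}M) = K_i/K_{i+1}$, all compatibly with differentials. This produces short exact sequences of complexes of $A$-modules
\[
0 \longrightarrow G^i \longrightarrow \Gr_t^i(M) \xrightarrow{\,\cdot t\,} \Gr_t^{i+1}(M) \longrightarrow 0, \qquad i=0,\dots,n.
\]

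The statement then follows from the associated long exact cohomology sequences. If $\Gr_t(M)$ is acyclic, every $\Gr_t^i(M)$ is acyclic, hence every $G^i$ is, hence $\Gr_K(M)$ is acyclic. Conversely, if $\Gr_K(M)$ is acyclic, every $G^i$ is acyclic, and descending induction starting from $\Gr_t^{n+1}(M)=0$ (so $\Gr_t^n(M)\cong G^n$ is acyclic, then $\Gr_t^{n-1}(M)$, and so on) shows every $\Gr_t^i(M)$ is acyclic, hence $\Gr_t(M)$ is acyclic. The hard part, as noted, is purely conceptual — recognizing that one must pass to the complexes $\Gr_t^i(M)$ and $K_i = t^iM \cap \Ker t_M$ and the sequences above rather than arguing directly with $t^iM$ and $\Ker t^i_M$; each individual verification (subcomplex claims, the kernel computation) is then a routine application of Lemma~\ref{submodules} and the identity $d_M(t^i x) = t^i d_M(x)$.
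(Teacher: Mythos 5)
Your proof is correct, and it is essentially the paper's argument: your short exact sequence $0 \to G^i \to \Gr_t^i(M) \to \Gr_t^{i+1}(M) \to 0$ is exactly the sequence \eqref{fundamentalses} (re-indexed), with the kernel identified via Lemma \ref{submodules} in the same way, and both directions proceed by the same descending inductions. Your explicit packaging of the filtration $K_i = t^iM \cap \Ker t_M$ of $\Ker t_M$ and the equivalence ``all $K_i$ acyclic iff all $G^i$ acyclic'' is a clean reorganization of what the paper does implicitly, but not a different method.
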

\begin{proof}
    Suppose first that $\Gr_K(M)$ is acyclic. The $n$-th quotient of the $K$-filtration is \[\frac{\Ker t^{n+1}_M}{\Ker t^n_M}= \frac{M}{\Ker t^n_M}\cong t^nM\] which is thus acyclic. We then have  the short exact sequence \[
        0 \to \frac{\Ker t_M\cap t^{i-1}M}{\Ker t_M\cap t^iM}\to \frac{t^{i-1}M}{t^{i}M}\tow{t} \frac{t^iM}{t^{i+1}M}\to 0
    \] which by Lemma \ref{submodules} we can rewrite as \begin{equation}\label{fundamentalses}
    0 \to \frac{t^{i-1}\Ker t^i_M}{t^{i}\Ker t^{i+1}_M} \to \frac{t^{i-1}M}{t^{i}M}\tow{t} \frac{t^iM}{t^{i+1}M}\to 0.
    \end{equation} Since $\Gr_K(M)$ is acyclic the first term is the quotient of two acyclic complexes, and by induction we get that each graded piece of the $T$-filtration is acyclic. For the opposite implication the argument is similar: assume that $\Gr_t(M)$ is acyclic. From the short exact sequence \[
    0 \to \frac{\Ker t_M}{t^n M}\to \frac{t^{n-1}M}{t^nM}\tow{t} t^nM \to 0   \] we know that $\Ker t_M$ is acyclic, so again using \eqref{fundamentalses} we conclude by induction that $t^i\Ker t^{i+1}_M$ is acyclic for all $i$.
\end{proof}
\begin{definition}
    An $A_n$-module is said to be \emph{$n$-acyclic} if either of the two equivalent conditions of Proposition \ref{kacyatcy} is satisfied. A closed morphism $f\colon M\to N$ is said to be a \emph{$n$-quasi-isomorphism} if its cone is $n$-acyclic.
\end{definition}
Clearly, a closed morphism is a $n$-quasi-isomorphism if and only if it induces quasi-isomorphisms between the associated graded of either the $t$-adic or the $K$-filtration.
\begin{example}
    If $A_n$ is a dg algebra (i.e. $c=0$), any $n$-acyclic module has by definition a finite filtration with acyclic quotients and is therefore itself acyclic. On the other hand, not all acyclic modules are $n$-acyclic. For an explicit example, consider the first order deformation $R_1=k[t]/(t^2)$ of the base ring, and look at the $R_1$-modules \[
    M=\ldots \to R_1 \tow{t} R_1 \tow{t} R_1 \to \ldots
    \] for an example with $R_1$-free components and its truncation \[
    N= 0 \to k \tow{t} R_1 \to k \to 0 
    \] for a bounded example. As discussed in \cite{Positselski_2018}, the only examples of this kind of behavior for the algebra $R_1$ with $R_1$-free components look like $M$ i.e. rely on the nonregularity of the base ring. As soon as non-free components are let back into the pictures, the class of examples expands dramatically.
\end{example}
\begin{corollary}\label{nacyacy}
    If $M$ is $n$-acyclic, then $\Ker t^i_M$ is $(i-1)$-acyclic for all $i>0$.
\end{corollary}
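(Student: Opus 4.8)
The plan is to use the $K$-filtration characterization of $n$-acyclicity provided by Proposition \ref{kacyatcy}, which is designed for exactly this kind of statement. Fix $i$ with $1 \le i \le n+1$ and set $N = \Ker t^i_M$. By the discussion preceding Lemma \ref{submodules}, $N$ is an $A_n$-submodule of $M$, and since $t^i$ acts on it by zero it is in fact a cdg module over $A_{i-1} = A_n/t^iA_n$; so it is meaningful to ask whether $N$ is $(i-1)$-acyclic.

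The key point is that the $K$-filtration of $N$ is simply a truncation of the $K$-filtration of $M$. Since the action of $t$ on $N$ is the restriction of the action of $t$ on $M$, for every $j \le i$ we have $\Ker t^j_N = \{\, m \in M : t^j m = 0 \,\} = \Ker t^j_M$ (if $t^j m = 0$ and $j \le i$, then $t^i m = 0$ too). In particular $\Ker t^i_N = N$, so the $K$-filtration of the $A_{i-1}$-module $N$ consists of the $i$ steps $0 \subseteq \Ker t_M \subseteq \cdots \subseteq \Ker t^i_M = N$, with graded pieces
\[
\Gr_K^{j}(N) = \frac{\Ker t^{j+1}_N}{\Ker t^{j}_N} = \frac{\Ker t^{j+1}_M}{\Ker t^{j}_M} = \Gr_K^{j}(M), \qquad j = 0, \dots, i-1.
\]

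To conclude, I would invoke Proposition \ref{kacyatcy}: since $M$ is $n$-acyclic, $\Gr_K(M)$ is acyclic, i.e.\ each $\Gr_K^{j}(M)$ is acyclic for $j = 0, \dots, n$, and in particular for $j = 0, \dots, i-1$ (note $i-1 \le n$). By the identification above every $\Gr_K^{j}(N)$ is then acyclic, so $\Gr_K(N)$ is acyclic and $N = \Ker t^i_M$ is $(i-1)$-acyclic. There is no genuinely hard step; the only thing to be careful about is the bookkeeping of module structures and filtrations. The argument would be much less transparent if one tried to work with the $t$-adic filtration of $\Ker t^i_M$ instead, which is not visibly related to that of $M$ — so the real content is recognizing that the $K$-filtration is the right tool here, which is exactly why Proposition \ref{kacyatcy} is placed before this corollary.
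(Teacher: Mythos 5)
Your proof is correct and is essentially the paper's own argument: the paper's one-line proof likewise observes that the graded pieces of the $K$-filtration of $\Ker t^i_M$ form a subset of those of $M$ and then invokes Proposition \ref{kacyatcy}. You have simply spelled out the identification $\Gr_K^j(\Ker t^i_M) = \Gr_K^j(M)$ for $j \le i-1$ explicitly, which is a harmless (and welcome) elaboration.
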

\begin{proof}
    The graded pieces of $\Ker t^i_M$ with respect to the $K$-filtration are a subset of the graded pieces of $M$ with respect to the same filtration, so this is implied by Proposition \ref{kacyatcy}.
\end{proof}
\begin{remark}
    At a first glance, our constructions have a lot in common with the theory of $N$-complexes \cite{kapranov1996qanalog}; indeed if $A_n$ is a deformation of order $n$, since $d^{2n+2}_M(M)\subseteq t^{n+1}M=0$ any $A_n$-module is a $(2N+2)$-complex. However the two notions of acyclicity differ significantly. Consider the case of a first order deformation $A_1$ with zero curvature, and take an $A_1$-module $M$; looking at it as a $4$-complex, being $4$-exact in the sense of \cite{kapranov1996qanalog} implies that the module \[
    _2H^i(M)=\frac{\Ker d^2_M\colon M^i \to M^{i+1}}{\Img d^2_M \colon M^{i-2}\to M^i}
    \] vanishes. But since $A_1$ has zero curvature, $d_M^2=0$ and $_2H^i(M)=M^i$; therefore any $4$-exact $A_1$ module has to be the zero module. On the other hand any contractible module suffices in giving an example of a nonzero $A_1$ module which is $1$-acyclic according to our definition.
\end{remark}
\begin{proposition}\label{prodcoprod}
    The full subcategory $\operatorname{Ac}^n\subseteq \hot(A_n)$ given by the $n$-acyclic modules is a triangulated subcategory closed under small products and coproducts.
\end{proposition}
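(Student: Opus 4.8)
The plan is to realize $\operatorname{Ac}^n$ as a finite intersection of preimages of the acyclic $A$-modules under a family of triangulated functors that preserve small products and coproducts. For each $i\in\{0,\dots,n\}$ consider the assignment $\Gr_t^i\colon A_n\Mod\to A\Mod$ sending $M$ to $\Gr_t^i(M)=\frac{t^iM}{t^{i+1}M}$ and a morphism to the induced map on subquotients. As recalled just before Lemma \ref{submodules}, the hypothesis $c\in tA_n$ makes $d_M$ descend to a square-zero differential on $\Gr_t^i(M)$, and the $A_n^\#$-action descends to an $A^\#$-action since $t$ annihilates the quotient; a direct check on the Hom-complex differential $(df)m=d_N(fm)-(-1)^{|f|}f(d_Mm)$ shows that $\Gr_t^i$ is compatible with the differentials on morphism complexes, i.e. it is a dg functor. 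Since a dg functor between pretriangulated dg categories induces a triangulated functor on homotopy categories, we obtain exact functors $\Gr_t^i\colon\hot(A_n)\to\hot(A)$; concretely $\Gr_t^i$ commutes with the shift, because $t^i(M[1])=(t^iM)[1]$ degreewise, and with mapping cones, because the underlying graded module of $\Cn(f)$ is $M[1]^\#\oplus N^\#$ and the $t$-adic filtration splits accordingly.

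Next, each $\Gr_t^i$ preserves small products and coproducts. This is a degreewise statement about underlying graded $A_n^\#$-modules: the $t$-action on $\prod_\alpha M_\alpha$, resp. $\coprod_\alpha M_\alpha$, is componentwise, so $t^i$ of a product (coproduct) is the product (coproduct) of the $t^iM_\alpha$, and both products and coproducts of modules are exact and hence commute with the passage to the quotient $t^i(-)/t^{i+1}(-)$. By the very definition of $n$-acyclicity we therefore have
\[
\operatorname{Ac}^n \;=\; \bigcap_{i=0}^{n}\,(\Gr_t^i)^{-1}(\operatorname{Ac}),
\]
where $\operatorname{Ac}\subseteq\hot(A)$ denotes the subcategory of acyclic $A$-modules.

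To finish I use three elementary observations. First, the preimage of a full, isomorphism-closed triangulated subcategory under a triangulated functor is again such a subcategory: fullness, closure under isomorphism and closure under shifts are immediate, and if $X\to Y\to Z\to X[1]$ is a triangle with $X,Y$ in the preimage, then applying the functor and using that the target subcategory is triangulated puts $Z$ in the preimage. Second, the preimage of a subcategory closed under small products (resp. coproducts) under a functor preserving small products (resp. coproducts) is again closed under them. Third, a finite intersection of triangulated subcategories closed under small products and coproducts inherits all these properties. Since $\operatorname{Ac}\subseteq\hot(A)$ is a triangulated subcategory closed under small products and coproducts — products and coproducts of complexes of modules are exact, hence commute with cohomology — the displayed identity yields the claim. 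I do not expect a genuine obstacle here: the only points that require care are the verification that $\Gr_t^i$ is a bona fide dg functor (well-definedness and naturality of the induced differentials and module structures on the graded pieces — the same kind of elementwise check flagged for Lemma \ref{submodules}) and the bookkeeping that products and coproducts in $A_n\Mod$ are computed degreewise on underlying graded modules. One could equally run the argument with the functors $\Gr_K^i$, the two yielding the same subcategory by Proposition \ref{kacyatcy}.
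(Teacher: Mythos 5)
Your proof is correct and follows essentially the same route as the paper: both arguments reduce everything to the fact that the functors $\Gr_t^i\colon\hot(A_n)\to\hot(A)$ are triangulated (commute with shifts, cones and homotopy equivalences) and preserve small products and coproducts, and then transfer the corresponding properties of $\operatorname{Ac}\subseteq\hot(A)$ back to $\operatorname{Ac}^n=\bigcap_i(\Gr_t^i)^{-1}(\operatorname{Ac})$. The only cosmetic differences are that you package the conclusion via preimages and finite intersections rather than verifying the subcategory axioms directly, and that you check product-preservation of $\Gr_t^i$ by a direct elementwise computation where the paper factors $\Gr_t^i$ as $\Img t^i$ followed by $A\otimes_{A_{n-i}}-$ and invokes finite presentation of $A$ over $A_{n-i}$.
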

\begin{proof}
    We fist verify that $\operatorname{Ac}^n$ is closed under isomorphisms in $\hot(A_n)$, i.e. homotopy equivalences: this follows from the fact that if $f\colon M \to N$ is a homotopy equivalence, it induces homotopy equivalences $\Gr^i_t(f)\colon \Gr^i_t(M)\to \Gr^i_t(N)$. 
    Similarly, if $M$ and $N$ are $n$-acyclic and $f\colon M \to N$ is any closed morphism, we have isomorphisms \[
    \Gr^i_t(\Cn(M \tow{f} N))\cong \Cn( \Gr^i_t(M) \tow{\Gr^i_t(f)} \Gr^i_t(N))
    \] and $\Cn(f)$ is $n$-acyclic.

    We have then proved that $\operatorname{Ac}^n\subseteq \hot(A_n)$ is a triangulated subcategory. To show that it is closed under products and coproducts it is enough to show, for example, that $\Gr_t^i(-)$ commutes with products and coproducts. For this, observe that $\Gr_t^i(-)$ coincides with the composition \[
    A_n\Mod \tow{\img t^i} A_{n-i}\Mod \tow{A\otimes_{A_{n-i}}-}A\Mod;
    \] 
    the functor $\Img t^i$ commutes with both products and coproducts; the tensor functor is a right adjoint so it automatically commutes with coproducts and, since $A$ is finitely-presented as an $A_{n-i}$-module, also commutes with products \cite[\href{https://stacks.math.columbia.edu/tag/059K}{Tag 059K}]{stacks-project}\footnote{The reference is for commutative rings, but in our case it is sufficient since $A\otimes_{A_{n-i}} M\cong k\otimes_{R_{n-i}} M$ as $k$-modules.} .
\end{proof}
\begin{remark}
    In fact, it is immediate to see that $\Gr_K^i(-)$ also commutes with (co)products, so both reductions preserve products and coproducts.
\end{remark}
We now arrive at our main definition.
\begin{definition}\label{deffiltered}
    The \emph{$n$-derived category $D^n(A_n)$} is the quotient of $\hot(A_n)$ by the subcategory $\operatorname{Ac}^n\subseteq \hot(A_n)$.
\end{definition}
In the case $n=0$, the $0$-acyclic modules coincide with the acyclic ones and $D^0(A)=D(A)$.

By Proposition \ref{prodcoprod}, $D^n(A_n)$ is a triangulated category with small products and coproducts  and the quotient functor \[\hot(A_n) \to D(A_n)\] preserves them.

We now give the relevant version of the notions of homotopy projective and homotopy injective modules.
\begin{definition}
     We will say that an $A_n$-module $P$ is \emph{$n$-homotopy projective} if \[
     \Hm{\hot(A_n)}(P, N)=0\
     \] for any $n$-acyclic module $N$. An $A_n$-module $I$ is \emph{$n$-homotopy injective} if \[
     \Hm{\hot(A_n)}(N,I)=0
     \] for any $n$-acyclic module $N$.
 \end{definition}
 In analogy to the classical case, $n$-homotopy projective modules are those in $\prescript{\perp}{} {\operatorname{Ac}^n}$ and $n$-homotopy injective modules those in $\operatorname{Ac}^{n\perp}$.
 \subsection{First results}

 \begin{proposition}
     The module $A\in A_n\Mod$ is $n$-homotopy projective.
 \end{proposition}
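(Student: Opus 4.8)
The plan is to compute the hom-complex $\Hm{A_n}(A,N)$ for an arbitrary $A_n$-module $N$ and then read off $\Hom_{\hot(A_n)}(A,N)$ as its degree-zero cohomology, exactly following the classical blueprint recalled above. The key observation is that $A$, regarded as an $A_n$-module, is $F(A)$, where $F\colon A\Mod\to A_n\Mod$ is the restriction of scalars along the surjection $A_n\to A_0=A$ and, on the right-hand side, $A$ is the free rank-one $A$-module. Since $F$ is a dg functor whose dg right adjoint is $\Ker t\colon N\mapsto \Ker t_N$, one obtains a natural isomorphism of complexes
\[
\Hm{A_n}(A,N)\;\cong\;\Hm{A}(A,\Ker t_N)\;\cong\;\Ker t_N ,
\]
the last isomorphism being evaluation at $1$. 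Concretely: a degree-$j$ $A_n$-linear map $A\to N$ is determined by the image of $1\in A^0$, which must be annihilated by $t$ (as $t\cdot 1=0$ in $A$) and hence lies in $(\Ker t_N)^j$; this identification is compatible with the differentials because $d_{A_n}$ is $R_n$-linear, so $d_N$ preserves $\Ker t_N$.

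Taking $H^0$ then gives $\Hom_{\hot(A_n)}(A,N)\cong H^0(\Ker t_N)$. Now suppose $N$ is $n$-acyclic. By Corollary \ref{nacyacy} applied with $i=1$, the module $\Ker t_N=\Ker t^1_N$ is $0$-acyclic, i.e.\ acyclic; hence $H^0(\Ker t_N)=0$, and in fact the whole complex $\Hm{A_n}(A,N)\cong \Ker t_N$ is acyclic, so $\Hom_{\hot(A_n)}(A[m],N)=0$ for every $m\in\mathbb{Z}$. This places $A$ in $\prescript{\perp}{}{\operatorname{Ac}^n}$, which is precisely the statement that $A$ is $n$-homotopy projective.

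There is essentially no serious obstacle here, since the argument is a verbatim adaptation of the uncurved case; the only point worth flagging is that what one needs is the acyclicity of $\Ker t_N$ rather than of $N$ itself, and this is exactly what the $K$-filtration description of $n$-acyclicity delivers through Corollary \ref{nacyacy} (equivalently, through Proposition \ref{kacyatcy}). One should also make sure the adjunction isomorphism $\Hm{A_n}(F(A),N)\cong\Hm{A}(A,\Ker t_N)$ is stated at the level of hom-complexes, not just of $Z^0$, but this is immediate from the fact that both $F$ and $\Ker t$ are dg functors and the underlying module adjunction is the usual one.
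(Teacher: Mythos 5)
Your argument is correct and is essentially identical to the paper's own proof: both identify $\Hm{A_n}(A,N)\cong\Hm{A}(A,\Ker t_N)\cong\Ker t_N$ via the restriction/kernel adjunction and then invoke the acyclicity of $\Ker t_N$ coming from the $K$-filtration characterization of $n$-acyclicity (Proposition \ref{kacyatcy}, equivalently Corollary \ref{nacyacy}). Nothing further is needed.
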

 \begin{proof}
     If $M$ is an $n$-acyclic $A_n$-module, we have \[
     \Hm{A_n}(A, M)\cong \Hm{A}(A, \Ker t_M)\cong \Ker t_M
     \]which is acyclic by Lemma \ref{kacyatcy}.
 \end{proof}

We can now show the first way in which $D^n(A_n)$ differs significantly from other derived categories of curved objects in the literature. Observe preliminarily that the forgetful functor $\hot(A)\to \hot (A_n)$ carries acyclic modules to $n$-acyclic modules, so defines a functor $D(A)\to D^n(A_n)$.
\begin{corollary}\label{nonzero}
    The forgetful functor $D(A)\to D^n(A_n)$ is fully faithful.
\end{corollary}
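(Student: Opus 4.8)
The plan is to deduce the statement from Proposition~\ref{krause} by showing that the forgetful functor carries homotopy projective $A$-modules to $n$-homotopy projective $A_n$-modules. Write $F\colon A\Mod\to A_n\Mod$ for restriction of scalars along the surjection $A_n\to A=A_0$, i.e.\ the functor denoted $F$ in the ``Adjoint functors'' paragraph in the case $m=0$. Since the $A$-action on an $A$-module already factors through $A_n$, one has $\Hm{A_n}(FM,FN)=\Hm{A}(M,N)$ as complexes for all $A$-modules $M,N$; hence $F$ is fully faithful as a dg functor and induces a fully faithful functor $\hot(A)\to\hot(A_n)$, and as it sends acyclic modules to $n$-acyclic ones the induced functor $D(A)\to D^n(A_n)$ is precisely the one in the statement.

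The crux is the following claim: if $M$ is homotopy projective over $A$, then $FM$ is $n$-homotopy projective over $A_n$, i.e.\ $FM\in\prescript{\perp}{}{\operatorname{Ac}^n}$. To see this, use the dg adjunction $F\dashv\Ker t$ (the case $m=0$ of the right adjoint $\Ker t^{m+1}$ from the ``Adjoint functors'' paragraph), which descends to an isomorphism $\Hm{\hot(A_n)}(FM,L)\cong\Hm{\hot(A)}(M,\Ker t_L)$ natural in $L$. If $L$ is $n$-acyclic, then $\Ker t_L=\Ker t^1_L$ is $0$-acyclic by Corollary~\ref{nacyacy}, hence an acyclic $A$-module; since $M\in\prescript{\perp}{}{\operatorname{Ac}}$ this forces $\Hm{\hot(A_n)}(FM,L)=0$, proving the claim. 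This is the one step where the finer structure of the $K$-filtration (via Corollary~\ref{nacyacy}) is genuinely used, and where I expect the real content to lie; the remainder is formal manipulation of Verdier localizations.

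Finally I would assemble the pieces. Given $M,N\in D(A)$, replacing $M$ by its homotopy projective resolution $\mathbf{p}M$ changes neither Hom-group, so we may assume $M$ is homotopy projective over $A$. Then the map induced by the functor factors as
\[
\Hm{D(A)}(M,N)\xleftarrow{\ \sim\ }\Hm{\hot(A)}(M,N)\xrightarrow[\ \sim\ ]{F}\Hm{\hot(A_n)}(FM,FN)\xrightarrow{\ \sim\ }\Hm{D^n(A_n)}(FM,FN),
\]
where the left isomorphism is Proposition~\ref{krause} applied to $\hot(A)\to D(A)$ (since $M\in\prescript{\perp}{}{\operatorname{Ac}}$), the middle one is full faithfulness of $F$ on homotopy categories, and the right one is Proposition~\ref{krause} applied to $\hot(A_n)\to D^n(A_n)$ (since $FM\in\prescript{\perp}{}{\operatorname{Ac}^n}$ by the claim). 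Hence $\Hm{D(A)}(M,N)\to\Hm{D^n(A_n)}(FM,FN)$ is bijective, so the functor is fully faithful.
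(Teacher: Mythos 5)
Your proof is correct. The key computation is the same as the paper's: both arguments ultimately reduce to the adjunction $\Hm{A_n}(F(-),L)\cong\Hm{A}(-,\Ker t_L)$ together with the fact that $\Ker t_L$ is acyclic for $L$ $n$-acyclic (Proposition \ref{kacyatcy}, or your Corollary \ref{nacyacy} with $i=1$). Where you diverge is in how this computation is propagated to full faithfulness on all of $D(A)$. The paper verifies the isomorphism of Hom-groups only for the pair $(A,M)$ with $A$ the compact generator -- this is exactly the content of the proposition immediately preceding the corollary, that $A$ is $n$-homotopy projective over $A_n$ -- and then invokes Keller's d\'evissage lemma \cite[Lemma 4.2]{Derivingdgcat} for coproduct-preserving functors out of a compactly generated category. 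You instead prove the stronger intermediate statement that $F$ carries all of $\prescript{\perp}{}{\operatorname{Ac}}$ into $\prescript{\perp}{}{\operatorname{Ac}^n}$, and then conclude by two applications of Proposition \ref{krause} plus dg full faithfulness of restriction along the surjection $A_n\to A$. Your route is self-contained (no compact generation or external d\'evissage needed, which is pleasant since compact generation of $D^n(A_n)$ is only established later in the paper), and the statement that $F$ preserves homotopy projectives is of independent interest: it is essentially the mechanism behind Corollary \ref{embeddings} and the ``filtered behaviour'' discussion in \S\ref{parres}, though there it is phrased via cell modules. The paper's route is shorter given the preceding proposition, and generalizes immediately to situations where one only controls Homs out of generators.
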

\begin{proof}
    Since $A$ is $n$-homotopy projective as an $A_n$-module, we have isomorphisms \[\Hm{D(A)}(A,M)\cong \Hm{\hot(A)}(A,M)\cong \Hm{\hot (A_n)}(A,M) \cong \Hm{D^n(A_n)}(A,M).
    \]for all $A$-modules $M$. Since $A$ is a compact generator of $D(A)$, by \cite[Lemma 4.2]{Derivingdgcat} this implies the claim.
\end{proof}
Corollary \ref{nonzero} will have a significant generalization in Corollary \ref{embeddings}.
\begin{corollary}\label{embedding}
    If $A_n$ is a dg algebra, there is a fully faithful functor \[
    D(A_n) \hookrightarrow D^n(A_n).
    \]
\end{corollary}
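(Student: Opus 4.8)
The plan is to build the functor from homotopy projective resolutions and to deduce full faithfulness from Proposition \ref{krause}. The point of departure is that for $c=0$ the algebra $A_n$ is an ordinary dg algebra, so $D(A_n)=\hot(A_n)/\operatorname{Ac}$ in the usual sense, and --- as already recorded in the Example following Proposition \ref{kacyatcy} --- every $n$-acyclic $A_n$-module, carrying a finite filtration with acyclic subquotients, is acyclic. Hence $\operatorname{Ac}^n\subseteq\operatorname{Ac}$ inside $\hot(A_n)$, and therefore $\prescript{\perp}{}{\operatorname{Ac}}\subseteq\prescript{\perp}{}{\operatorname{Ac}^n}$: every homotopy projective $A_n$-module is $n$-homotopy projective. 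I would then define the desired functor as the composite $D(A_n)\xrightarrow{\mathbf{p}}\hot(A_n)\to D^n(A_n)$, where $\mathbf{p}$ is the (exact, fully faithful) left adjoint to $\hot(A_n)\to D(A_n)$ with essential image the homotopy projective modules, and the second arrow is the Verdier quotient by $\operatorname{Ac}^n$.

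To check full faithfulness, fix $M,N\in D(A_n)$. The object $\mathbf{p}M$ is homotopy projective, hence $n$-homotopy projective by the previous paragraph, so $\mathbf{p}M\in\prescript{\perp}{}{\operatorname{Ac}^n}$. Proposition \ref{krause}, applied with $\T=\hot(A_n)$ and $\cS=\operatorname{Ac}^n$, then says that $\Hm{\hot(A_n)}(\mathbf{p}M,\mathbf{p}N)\to\Hm{D^n(A_n)}(\mathbf{p}M,\mathbf{p}N)$ is an isomorphism. Composing with the identification $\Hm{D(A_n)}(M,N)\cong\Hm{\hot(A_n)}(\mathbf{p}M,\mathbf{p}N)$ coming from full faithfulness of $\mathbf{p}$, one obtains an isomorphism which unwinds to the map on Hom-spaces induced by the composite functor; so the functor is fully faithful. (One could equally resolve by homotopy injectives and use the dual half of Proposition \ref{krause}, or observe that $A_n$ is a compact generator of $D(A_n)$ which is $n$-homotopy projective and argue as in Corollary \ref{nonzero} via \cite[Lemma 4.2]{Derivingdgcat}.)

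I do not expect a genuine obstacle here: the entire content sits in the inclusion $\operatorname{Ac}^n\subseteq\operatorname{Ac}$, after which the statement is purely formal --- a matter of the universal property of Verdier localization together with the existence of homotopy projective resolutions over a dg algebra. The only steps needing a little care are the verification that $\operatorname{Ac}^n$ is thick, so that Proposition \ref{krause} applies (immediate from Proposition \ref{prodcoprod} and the splitting of idempotents in $\hot(A_n)$), and the bookkeeping confirming that the Hom-isomorphism produced above is the one induced by the functor we wrote down rather than some unrelated map.
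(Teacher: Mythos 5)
Your proposal is correct and follows essentially the same route as the paper: compose the fully faithful left adjoint $\mathbf{p}$ with the quotient $\hot(A_n)\to D^n(A_n)$, observe that $n$-acyclic implies acyclic so homotopy projective modules are $n$-homotopy projective, and conclude full faithfulness via Proposition \ref{krause}. The extra bookkeeping you flag is harmless but not needed beyond what the paper already records.
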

\begin{proof}
Recall that the quotient $\hot(A_n)\to D(A_n)$ admits a fully faithful left adjoint $\mathbf{p
}\colon D(A_n) \to \hot (A_n)$ whose essential image is given by the homotopy projective $A_n$-modules; composing this with the quotient $\hot(A_n) \to D(A_n)$ one obtains a functor \[
D(A_n) \tow{\mathbf{p}} \hot(A_n)\to D^n(A_n).
\] Since every $n$-acyclic module is acyclic, homotopy projective modules are in particular $n$-homotopy projective, so the quotient $\hot(A_n) \to D^n(A_n)$ is fully faithful when restricted to the image of $\mathbf{p}$ and we are done.
\end{proof}
\begin{remark}
    In other words, the quotient functor \[
    D^n(A_n)\to D(A_n)
    \] admits both a left and a right adjoint, which are then automatically fully faithful.
\end{remark}

\section{Compact generation}\label{parcompgen}
So far, we haven't proven anything about $D^n(A_n)$ which is not also true about $\hot(A_n)$; in particular, $D^n(A_n)$ might be too large for it to represent a useful invariant. In this section we prove that that is not the case by showing that, unlike $\hot(A_n)$, the $n$-derived category is a compactly generated triangulated category (Theorem \ref{propcompgen}).

\begin{example}\label{uncurved}
    If $A_n$ is a dg algebra, it is easy to see that $D^n(A_n)$ is compactly generated. Indeed consider the $n+1$ dg $A_n$-modules $A_0, A_1, \ldots,  A_n$. Those are all $n$-homotopy projective, since for any $n$-acyclic  module $M$ one has\[
    \Hm{A_n}(A_i, M)\cong \Hm{A_i}(A_i, \Ker t^{i+1}_M)\cong \Ker t^{i+1}_M
    \] which is $i$-acyclic and hence acyclic.

    Now if $M$ is an $A_n$-module such that $\Ker t^{i+1}_M$ is acyclic for all $i$, it follows immediately that $\Gr_K(M)$ is acyclic and therefore $M$ is $n$-acyclic; compactness comes from the fact that the functors $\Ker t^i$ commute with coproducts.
\end{example}
The problem is that if $A_n$ has nonzero curvature, none of the modules $A_i$ - except $A_0$ - are objects of $A_n\Mod$. The question is then one of finding alternative compact generators that are defined also in the curved case. 
\subsection{Twisted modules}\begin{definition}
    A left (resp. right) \emph{qdg module} \cite{Polishchuk_2012, FilteredAinf} over a cdg algebra $\A$ is a graded left (resp. right) $\A^\#$-module $M^\#$ equipped with a derivation $d_M\in \Hom_R(M^\#,M^\#)$ of degree $1$.
\end{definition}
It is evident from this definition that a cdg module is a qdg module for which the condition $d^2_M=c$ holds. The key reason why qdg modules are relevant is that, while the free module $\A$ is not a cdg module, it is a qdg module. Although for us this notion is just a technical tool to eventually get back to the world of cdg modules, it is one with intrinsic interest: in the many-objects case, considering the larger category of qdg modules makes it possible to define a Yoneda embedding, which does not exist in the cdg case (\cite{FilteredAinf, Polishchuk_2012}).

Let now $M_i$ be a a finite family of qdg $A_n$-modules and \[
F\colon \bigoplus_i M_i \to \bigoplus_i M_i
\] an $A_n$-linear homogeneous morphism of degree $1$.
The twisted module $M_F$ is defined as the qdg module which has as underlying graded module $\oplus_i M_i$ and as predifferential \[
d_F m_i=d_{M_i}m_i+Fm_i.
\]
It is straightforward to see that $d_F$ is still a derivation, so $M_F$ is in a natural way a qdg module; we will say that $M_F$ is the qdg module $\oplus_i M_i$ twisted by $F$. The condition $d_F^2m_i=cm_i$ of being a cdg module corresponds to the Maurer-Cartan equation for $F$ \[
d_{M_i}^2 m_i + Fd_{M_i} + d_{M_i}F + F^2m_i=cm_i.
\]
\begin{remark}
    In the $A_\infty$-case, infinite sums get involved and delicate convergence issues arise, see \cite{LowenCurvature}; in the dg case that we placed ourselves in, any morphism $F$ gives a well defined twisted module.
\end{remark}
\subsection{Construction of the generators}\label{consgen}
To define the modules that will be the generators of $D^n(A_n)$, begin with the following observations: we know that $A_n$, as well as $A_i$ for $i\leq n$, is a qdg $A_n$-module; moreover, there is a well-defined $A_n$-module map \[
t\colon A_{i-1}\to A_i 
\]  for all $i$, together with the already cited projection $\pi \colon A_i\to A_{i-1}$. Finally since $c\in tA_n$, there exists an element $\frac{c}{t}\in A^2_n$ such that $t\frac{c}{t}=c$.

Set $A_{-1}=0$. Define the twisted module $\Gamma_i$ for $i= 0, \ldots, n$ as the qdg module $X_i=A_i\oplus A_{i-1}[1]$ twisted by the matrix \[\gamma_i=
\begin{bmatrix}
0 & \pi\circ \frac{c}{t} \\
t & 0
\end{bmatrix}.\]
Concretely, one has  \[d_{\Gamma_i} (\alpha_k, a_{k+1})=(d_{A_n} \alpha_k+ta_{k+1}, d_{A_{n-1}[1]}a_{k+1}+\pi\left(\alpha_k \frac{c}{t}\right))\]
for $\alpha_k \in A_i$ and $a_{k+1}\in A_{i-1}$; recall that by definition $d_{A_{i-1}[1]}=-d_{A_{i-1}}$. 
\begin{proposition}
    The qdg $A_n$-module $\Gamma_n$ is a cdg $A_n$-module.
\end{proposition}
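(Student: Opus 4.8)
The plan is to verify directly that the twisted predifferential $d_{\Gamma_n}$ squares to left multiplication by $c$; equivalently, that $\gamma_n$ satisfies the Maurer--Cartan equation on the qdg module $X_n=A_n\oplus A_{n-1}[1]$. Concretely, I would apply the explicit formula for $d_{\Gamma_n}$ twice to a homogeneous element $(\alpha,a)\in A_n\oplus A_{n-1}[1]$ and show that the first component of $d_{\Gamma_n}^2(\alpha,a)$ equals $c\alpha$ and the second equals $\bar c a$, where $\bar c$ denotes the image of $c$ under $\pi\colon A_n\to A_{n-1}$; since $A_{n-1}[1]$ carries the $A_n$-action by restriction of scalars, the pair $(c\alpha,\bar c a)$ is exactly $c\cdot(\alpha,a)$ in $\Gamma_n$.

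For the first component I expect the two cross terms $d_{A_n}(ta)$ and $t\,d_{A_{n-1}}a$ to cancel, because $d_{A_n}$ is $R_n$-linear (so $d_{A_n}(tx)=t\,d_{A_n}(x)$) and the map $t\colon A_{n-1}\to A_n$ is induced by the same multiplication; what then remains is $d_{A_n}^2\alpha+t\,\pi(\alpha\tfrac{c}{t})$. Using the cdg-algebra axiom $d_{A_n}^2=[c,-]$ together with $t\tfrac{c}{t}=c$ and centrality of $t$, this becomes $(c\alpha-\alpha c)+\alpha c=c\alpha$.

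The second component carries the real content. The key observation is that $d_{A_n}(\tfrac{c}{t})$ lies in $\Ker(\pi)=t^nA_n$: differentiating $c=t\tfrac{c}{t}$ and using $d_{A_n}(c)=0$ gives $t\,d_{A_n}(\tfrac{c}{t})=0$, and the annihilator of $t$ in $A_n=A[t]/(t^{n+1})$ is precisely $t^nA_n$. Hence $\pi(\alpha\,d_{A_n}(\tfrac{c}{t}))=0$, so by the Leibniz rule and the fact that $\pi$ commutes with predifferentials, $d_{A_{n-1}}\pi(\alpha\tfrac{c}{t})=\pi(d_{A_n}\alpha\cdot\tfrac{c}{t})$. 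This makes the two $\pi(d_{A_n}\alpha\cdot\tfrac{c}{t})$ contributions in the second component cancel, leaving $d_{A_{n-1}}^2 a+\pi(ta\cdot\tfrac{c}{t})$, which by $d_{A_{n-1}}^2=[\bar c,-]$ and $\pi(ta\cdot\tfrac{c}{t})=\pi(a\cdot t\tfrac{c}{t})=a\bar c$ equals $(\bar c a-a\bar c)+a\bar c=\bar c a$, as needed.

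The main obstacle is essentially this one identity, $d_{A_n}(\tfrac{c}{t})\in t^nA_n$, together with its consequence that $\pi$ annihilates $\alpha\,d_{A_n}(\tfrac{c}{t})$ — this is exactly what forces the choice of $A_{i-1}[1]$ (the quotient of $A_i$ by the annihilator $t^iA_i$ of $t$) as the second summand of $X_i$, and is the structural reason the construction works at all. Beyond that the proof is bookkeeping; the only points requiring care are the sign convention for the shift (the predifferential of $A_{n-1}[1]$ being $-d_{A_{n-1}}$) and checking well-definedness of the maps $t\colon A_{n-1}\to A_n$ and $\pi\colon A_n\to A_{n-1}$ entering $\gamma_n$.
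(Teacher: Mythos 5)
Your proposal is correct and follows essentially the same route as the paper: a direct verification of the Maurer--Cartan identity, with the crucial step being that $d_{A_n}(\tfrac{c}{t})\in\Ker t_{A_n}=t^nA_n=\Ker\pi$ (so the cross terms in the second component cancel), which is exactly the argument given in the text. The only difference is cosmetic — you expand $d_{\Gamma_n}^2$ directly rather than grouping it as $(d_X^2+\gamma^2)+(d_X\gamma+\gamma d_X)$.
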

\begin{proof}
 For simplicity of notation, write $\gamma$ 
 for $\gamma_n$ and $X$ for $X_n$. We have to verify that, for all $x\in X$, \[d_{\Gamma_n}^2x = d_X^2x + d_X \gamma x  +\gamma d_X + \gamma^2 x = cx.\]
    Let's examine the sum part by part.
    \begin{itemize}
        \item Since $\pi(c)$ coincides with the curvature of $A_{n-1}$, we have \[d_X^2(\alpha_k, a_{k+1})=(c\alpha_k-\alpha_k c, \pi (c)a_{k+1}-a_{k+1}\pi(c)).\]
        \item For the last term, we have \[\gamma^2 (\alpha_k, a_{k+1})=(t\pi\left(\alpha_k \frac{c}{t}\right), \pi\left(ta_{k+1}\frac{c}{t}\right))= \left(\alpha_k c, a_{k+1}\pi(c)\right)\] where the last equality is given by direct computation. 
    \end{itemize}
    Therefore $d_X^2+\gamma^2$ equals the action of the curvature, and we will be done if we show that  \[
    d_X \gamma+  \gamma d_X =0.
    \] Computing, we get \[
    d_X \gamma (\alpha_{k}, a_{k+1})=(d_{A_{n}}ta_{k+1}, -d_{A_{n-1}}\pi \left(\alpha_k\frac{c}{t}\right))
    \] and \[
    \gamma d_X (\alpha_k, a_{k+1})=(-td_{A_{n-1}}a_{k+1}, \pi \left(d_{A_n}\alpha_k \frac{c}{t}\right));
    \]since $t$ commutes with the predifferentials, the first terms cancel out; moreover $\pi$ commutes with the predifferentials so the sum of the second terms reduces to \[
    \pi\left(\alpha_k d_{A_n}\frac{c}{t}\right)=\pi(\alpha_k)\pi\left(d_{A_n}\frac{c}{t}\right)
    \]and in order to conclude we need to prove that $\pi\left(d_{A_n}\frac{c}{t}\right)=0$. Since $c$ is closed with respect to $d_{A_n}$, one has \[0=d_{A_n}c=td_{A_n}\frac{c}{t}\]
so $d_{A_n}\frac{c}{t}\in \Ker t_{A_n}$. But $A_n$ is $R_n$-free, so $\Ker t_{A_n}= t^nA_n=\Ker \pi$ and we are done.
\end{proof}
\begin{remark}
    The module $\Gamma_n$ is similar to the module $A_{(n)}$ defined in \cite{LowenCurvature} as the twist of $A_n\oplus A_n[1]$ by the matrix 
    \[
\begin{bmatrix}
0 & \frac{c}{t} \\
t & 0
\end{bmatrix}.\] However in order for $A_{(n)}$ to be a cdg module, the authors need to impose the existence of a deformation $A_{n+1}$ of a higher order extending the deformation $A_n$; this makes it possible to ``correct'' $\frac{c}{t}$ into an actual $d_{A_n}$-closed element. In our case, as we have shown, this need is removed by quotienting out $t^nA_n[1]$. Of course, the module $\Gamma_n$ does not work for the scope of \cite{LowenCurvature} since it is not $R_n$-free.
\end{remark}
In the same way, one proves that $\Gamma_i$ is a cdg $A_n$-module for all $i$. For a cdg $A_n$-module $M$, denote by $(M)_i$ the complex of $R_n$-modules $\Hm{A_n}(\Gamma_i, M)$[1]; explicitly\footnote{The shift is just to avoid negative indices.}, $(M)_i$ is the module $\Ker t_M^n\oplus \Ker t^{n+1}_M[1]$ twisted by the matrix \begin{equation}\label{twistinghom}
    \begin{bmatrix}
0 & \iota \circ \frac{c}{t} \\
t & 0
\end{bmatrix}\end{equation} where $\iota \colon \Ker t^n_M \to \Ker t^{n+1}_M$ is the inclusion; note that $t\Ker t^{n+1}_M\subseteq \Ker t^n_M$ so everything is well defined. By definition $\Gamma_0=A$ so $(M)_0\cong \Ker t_M[1]$.
\begin{proposition}\label{tria}
    There is a triangle in $D(R_n)$
    \begin{equation}\label{triaeq}
\Ker t_M [1]\to (M)_n \to \frac{\Ker t^{n}_M}{t M} \to \Ker t_M [2].         
    \end{equation}
\end{proposition}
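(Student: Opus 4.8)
The plan is to realize \eqref{triaeq} as the distinguished triangle attached to an honest short exact sequence of complexes of $R_n$-modules, after recognising one of the three terms as $\Ker t_M[1]$ up to quasi-isomorphism. I start from the explicit presentation of $(M)_n$ given just above the statement: its underlying graded $R_n$-module is $\Ker t^n_M\oplus \Ker t^{n+1}_M[1]=\Ker t^n_M\oplus M[1]$, and its differential restricts $d_M$ (with a sign on the shifted summand) to each factor while coupling the two factors via multiplication by $t$ from the $M[1]$-factor into $\Ker t^n_M$ and via multiplication by $\tfrac{c}{t}$ the other way; that $d^2=0$ here is exactly the content of the computations in \S\ref{consgen} showing $\Gamma_n$ is a cdg module (so that $d^2_M=c\cdot(-)$ and $d_{A_n}(\tfrac{c}{t})\in t^nA_n$ enter). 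The crucial observation is that the coupling term that lands in the first summand takes values in $tM\subseteq\Ker t^n_M$. Hence the projection $(M)_n\to\Ker t^n_M$ followed by $\Ker t^n_M\twoheadrightarrow\Ker t^n_M/tM$ is a chain map, and its kernel is precisely the graded submodule $K:=tM\oplus M[1]$, which is a subcomplex (since $d_M$ preserves $tM$ and $t\cdot M[1]\subseteq tM$) with quotient complex $\bigl(\Ker t^n_M/tM,\ \bar d_M\bigr)$. This yields $0\to K\to (M)_n\to \Ker t^n_M/tM\to 0$, hence a triangle $K\to (M)_n\to \Ker t^n_M/tM\to K[1]$ in $D(R_n)$.

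It then remains to exhibit a quasi-isomorphism $\phi\colon \Ker t_M[1]\to K$. I take $\phi(z)=(0,z)\in tM\oplus M[1]$; this is a chain map precisely because $tz=0$ for $z\in\Ker t_M$, which kills the only coupling term that would otherwise obstruct chain-map-ness. To see $\phi$ is a quasi-isomorphism I compute its cokernel: as a graded module $\Coker\phi\cong tM\oplus (M/\Ker t_M)[1]\cong tM\oplus tM[1]$, using the isomorphism $M/\Ker t_M\xrightarrow{\ \sim\ }tM$ given by multiplication by $t$, and under this identification the differential of $\Coker\phi$ sends $(u,v)$ to $(d_Mu+v,\ -d_Mv\pm cu)$. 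This complex is contractible via the explicit homotopy $h(u,v)=(0,u)$: a one-line check gives $dh+hd=\id$, the curvature term $\pm cu$ being absorbed harmlessly. Therefore $\Coker\phi$ is acyclic, $\phi$ is a quasi-isomorphism, and splicing $\phi$ into the triangle above produces exactly \eqref{triaeq}, the connecting morphism $\Ker t^n_M/tM\to \Ker t_M[2]$ being the boundary map of the short exact sequence composed with $\phi^{-1}$.

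The only genuinely delicate point is the bookkeeping of shifts and signs when writing down the differentials of the twisted module $(M)_n$ and of $\Coker\phi$; once these are pinned down, each verification (that $K$ is a subcomplex with the stated quotient, that $\phi$ is a chain map, that $\Coker\phi$ is contractible) is immediate. It is worth stressing that the curvature intervenes only through the term $\pm cu$ in $\Coker\phi$, which the contracting homotopy eliminates with no hypothesis beyond $c\in tA_n$; this is precisely why the argument goes through uniformly in the curved case, and it reuses the same closedness and divisibility properties of $c$ already exploited in the construction of the modules $\Gamma_i$.
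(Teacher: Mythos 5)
Your proof is correct and takes essentially the same route as the paper: you single out the subcomplex $tM\oplus M[1]\subseteq (M)_n$ with quotient $\Ker t^n_M/tM$ (this is exactly the paper's sequence \eqref{Mn1}), and compare it to $\Ker t_M[1]$ via the same auxiliary complex $Z\cong tM\oplus tM[1]$ twisted by $c$. The only difference is cosmetic — you kill $Z$ with the explicit contracting homotopy $h(u,v)=(0,u)$, while the paper filters $Z$ $t$-adically and notes each graded piece is the cone of an isomorphism — and your version is, if anything, slightly more direct.
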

\begin{proof}
    There are natural chain maps \[\Ker t_M[1]\to (M)_n\text{ and }(M)_n\to \frac{\Ker t^{n}_M}{t M}\] induced by the inclusion $\Ker t_M\hookrightarrow M$ and the projection $\Ker t^n_M \to \Ker t^n_M/tM$. These give short exact sequences \begin{equation}\label{Mn1}
    0 \to X \to (M)_n \to \frac{\Ker t^{n}_M}{t M} \to 0
    \end{equation}  and \begin{equation}\label{Mn2}
    0\to \Ker t_M[1]\to (M)_n \to Y \to 0
    \end{equation} where $X$ is the twist of $tM\oplus M[1]$ and $Y$ of $M\oplus M/\Ker t_M [1]$ by the same matrix \eqref{twistinghom} that defines $(M)_n$. One also sees that there are chain maps $\Ker t_M[1] \to X$ and $Y \to \Ker t^{n}_M/t M$ - again induced by the inclusion and projection - giving the short exact sequences 
    \[
    0\to \Ker t_M[1]\to X \to Z \to 0
    \] and \[
    0 \to Z'\to Y \to \frac{\Ker t^{n}_M}{t M} \to 0.
    \]
    It's immediate to see that $Z$ and $Z'$ are isomorphic, both being identified to the module $tM \oplus M/\Ker t_M[1]$ twisted by the matrix with the same coefficients as \eqref{twistinghom}. Under the isomorphism $M/\Ker t_M\cong tM$, one sees that $Z$ corresponds to the module $tM\oplus tM[1]$ twisted by the matrix
    \[\begin{bmatrix}
0 &  c\\
1 & 0
\end{bmatrix}.\] Consider the $t$-adic filtration on $Z$; it is a finite filtration, and since $c$ is divisible by $t$ its graded pieces are the twists of $\frac{t^iM}{t^{i+1}M}\oplus \frac{t^iM}{t^{i+1}M}[1]$ by the matrix \[\begin{bmatrix}
0 &  0\\
1 & 0
\end{bmatrix};\] this, being the cone of an isomorphism, is contractible: then each graded piece is acyclic, and so is $Z$.  Thus we get that $X$ is quasi-isomorphic to $\Ker t_M[1]$ and $Y$ to $\Ker t^{n}_M/t M$. Plugging these identifications in either \eqref{Mn1} or \eqref{Mn2} we are done.
\end{proof}
In the same way, one shows that there are triangles in $D(R_i)$ \begin{equation}\label{triagen}
      \Ker t_M[1] \to (M)_i \to \frac{\Ker t^{i}_M}{t \Ker t^{i+1}_M} \to  \Ker t_M[2]
\end{equation} for all $i \leq n$.
\begin{proposition}
    The modules $\Gamma_0, \ldots , \Gamma_n\in A_n\Mod$ are $n$-homotopy projective compact $A_n$-modules.
\end{proposition}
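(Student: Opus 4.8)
The plan is to deduce everything from the triangle \eqref{triagen} together with the two equivalent descriptions of $n$-acyclicity in Proposition \ref{kacyatcy}. First I would record a reduction: since $\operatorname{Ac}^n$ is a triangulated subcategory of $\hot(A_n)$ (Proposition \ref{prodcoprod}), it is closed under shifts, and hence an $A_n$-module $P$ is $n$-homotopy projective if and only if the complex $\Hm{A_n}(P,N)$ is acyclic for \emph{every} $n$-acyclic $N$ (using $\Hm{\hot(A_n)}(P,N[j]) = H^j\Hm{A_n}(P,N)$). Applying this to $P=\Gamma_i$ and recalling $(N)_i = \Hm{A_n}(\Gamma_i,N)[1]$, it suffices to prove that $(N)_i$ is acyclic whenever $N$ is $n$-acyclic.

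So let $N$ be $n$-acyclic and feed it into the triangle \eqref{triagen} in $D(R_i)$:
\begin{equation*}
\Ker t_N[1] \longrightarrow (N)_i \longrightarrow \frac{\Ker t^i_N}{t\Ker t^{i+1}_N} \longrightarrow \Ker t_N[2].
\end{equation*}
It is enough to show the two outer terms are acyclic. The left term is $\Gr^0_K(N)$, which is acyclic by $n$-acyclicity of $N$. For the right term I would invoke Lemma \ref{submodules} to identify $t\Ker t^{i+1}_N$ with $tN\cap \Ker t^i_N$, and then observe that multiplication by $t^i$ fits into a short exact sequence of complexes of $R_i$-modules
\begin{equation*}
0 \longrightarrow \frac{\Ker t^i_N}{t\Ker t^{i+1}_N} \longrightarrow \frac{N}{tN} \xrightarrow{t^i} \frac{t^iN}{t^{i+1}N} \longrightarrow 0 ,
\end{equation*}
the key computation being that $\{\,m\in N : t^i m \in t^{i+1}N\,\} = \Ker t^i_N + tN$, so the kernel of $t^i$ on $N/tN$ equals $(\Ker t^i_N+tN)/tN \cong \Ker t^i_N/(tN\cap \Ker t^i_N)$. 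Since $N/tN = \Gr^0_t(N)$ and $t^iN/t^{i+1}N = \Gr^i_t(N)$ are both acyclic for an $n$-acyclic $N$ (Proposition \ref{kacyatcy}), the left-hand term of this sequence is acyclic as well. Hence $(N)_i$ is acyclic, and $\Gamma_i$ is $n$-homotopy projective.

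For compactness: since $\Gamma_i$ is $n$-homotopy projective, Proposition \ref{krause} identifies $\Hm{D^n(A_n)}(\Gamma_i,M)$ with $\Hm{\hot(A_n)}(\Gamma_i,M) = H^0\Hm{A_n}(\Gamma_i,M)$, and coproducts in $D^n(A_n)$ are computed in $\hot(A_n)$. The underlying graded module of $\Gamma_i$ is $A_i\oplus A_{i-1}[1]$, so as a functor of $M$ the hom-complex $\Hm{A_n}(\Gamma_i,M)$ is assembled from $\Ker t^{i+1}_M$ and $\Ker t^i_M[-1]$; each functor $\Ker t^j(-)$ commutes with arbitrary coproducts (an element of a direct sum has only finitely many nonzero components), hence so does $\Hm{A_n}(\Gamma_i,-)$ at the level of complexes, and so does $H^0$ of it. Therefore $\Gamma_i$ is compact in $D^n(A_n)$.

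The only step that needs genuine care is the acyclicity of $\Ker t^i_N/t\Ker t^{i+1}_N$: this is exactly the point where the $t$-adic and $K$-filtrations interact, via Lemma \ref{submodules}, and where one should check that the relevant subquotients of $N$ really are subcomplexes (which holds because $c\in tA_n$), so that the short exact sequence above genuinely lives among complexes of $R_i$-modules. The rest is formal bookkeeping with the adjunctions of \S\ref{parprelim}.
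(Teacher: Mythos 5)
Your proof is correct, and its overall architecture coincides with the paper's: reduce $n$-homotopy projectivity to the acyclicity of $(N)_i=\Hm{A_n}(\Gamma_i,N)[1]$ for $n$-acyclic $N$, invoke the triangle \eqref{triagen} to reduce further to the acyclicity of $\Ker t^i_N/t\Ker t^{i+1}_N$, and get compactness from the fact that $\Ker t^j(-)$ commutes with coproducts. The one place where you genuinely diverge is the last reduction. The paper exhibits $\Ker t^i_M/t\Ker t^{i+1}_M$ as the \emph{cokernel} in the sequence
\[
0 \to t^i\Ker t^{i+1}_M \to \frac{\Ker t^i_M}{t\Ker t^{i}_M}\to \frac{\Ker t^i_M}{t\Ker t^{i+1}_M} \to 0,
\]
obtained from the third isomorphism theorem plus Lemma \ref{submodules}, and then needs Corollary \ref{nacyacy} (the $i$-acyclicity of $\Ker t^{i+1}_M$ and $(i-1)$-acyclicity of $\Ker t^i_M$) to kill the first two terms. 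You instead exhibit it as the \emph{kernel} of $t^i\colon \Gr^0_t(N)\to \Gr^i_t(N)$, via the identity $\{m: t^im\in t^{i+1}N\}=\Ker t^i_N+tN$ and Lemma \ref{submodules}; this uses only the $t$-adic graded pieces of $N$ itself and so bypasses Corollary \ref{nacyacy} entirely. Both computations are correct; yours is marginally more self-contained at this step, while the paper's version recycles the sequence \eqref{ses1} later in the proof of Theorem \ref{propcompgen} (where the cokernel presentation is the one actually needed), which is presumably why it is stated that way.
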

\begin{proof}
    Let $M$ be an $n$-acyclic $A_n$-module: by Proposition \ref{kacyatcy}, we know that $\Ker t_M[1]\cong (M)_0$ is acyclic. Hence by the triangle (\ref{triagen}) it is enough to prove that $\frac{\Ker t^i_M}{t\Ker t_M^{i+1}}$ is acyclic  to conclude that $(M)_i$ is acyclic. For this, consider the short exact sequence\begin{equation*}
        0 \to \frac{t\Ker t^{i+1}_M}{t\Ker t^{i}_M} \to \frac{\Ker t^i_M}{t\Ker t^{i}_M}\to \frac{\Ker t^i_M}{t\Ker t^{i+1}_M} \to 0
    \end{equation*} 
 stemming from the third isomorphism theorem for modules. Applying Lemma \ref{submodules} to $\Ker t^{i+1}_M$, we have that \[t\Ker t^i_M=t\Ker t^{i+1}_M \cap \Ker t^{i-1}_M\] so the first term can be rewritten as \[
 \frac{t\Ker t^{i+1}_M}{t\Ker t^{i}_M} \cong \frac{t\Ker t^{i+1_M}}{t\Ker t^{i+1}_M \cap \Ker t^{i-1}_M}\cong t^i\Ker t^{i+1}_M
 \] and the whole short exact sequence becomes
 \begin{equation}\label{ses1}
        0 \to t^i\Ker t^{i+1}_M \to \frac{\Ker t^i_M}{t\Ker t^{i}_M}\to \frac{\Ker t^i_M}{t\Ker t^{i+1}_M} \to 0;
    \end{equation} 
    by Corollary \ref{nacyacy} $\Ker t^{i+1}_M$ is $i$-acyclic and $\Ker t^i_M$ is $(i-1)$ acyclic, thus the first and second term are acyclic and hence so is the third: we have then proved that $(M)_i$ is acyclic for all $n$-acyclic $M$ and all $i$, i.e. $\Gamma_0, \ldots, \Gamma_n$ are $n$-homotopy projective. Compactness comes directly from the fact that for any collection $\{M_\lambda\}_{\lambda\in \Lambda}$ there is an isomorphism \[ \bigoplus_\lambda(M_\lambda)_i
   \cong  \left(\bigoplus_\lambda M_\lambda\right)_i
   \]following from the fact that the functor $\Ker t^i$ commutes with coproducts.
\end{proof}

\begin{theorem}\label{propcompgen}
    The category $D^n(A_n)$ is compactly generated by the objects $\Gamma_0, \ldots \Gamma_n$.
\end{theorem}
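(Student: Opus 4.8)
The statement asks for two things: that each $\Gamma_i$ is compact in $D^n(A_n)$, and that the $\Gamma_i$ jointly generate it. The plan is to obtain both by feeding the properties of the $\Gamma_i$ proved just above — namely that each is $n$-homotopy projective, that $(-)_i=\Hm{A_n}(\Gamma_i,-)[1]$ commutes with coproducts, and that it sits in the triangles \eqref{triagen} — into Proposition \ref{krause} and the ``key fact'' Proposition \ref{kacyatcy}. For compactness, since each $\Gamma_i\in\prescript{\perp}{}{\operatorname{Ac}^n}$, Proposition \ref{krause} makes $\Hm{\hot(A_n)}(\Gamma_i,X)\to\Hm{D^n(A_n)}(\Gamma_i,X)$ an isomorphism for every $X$; as the localization functor $\hot(A_n)\to D^n(A_n)$ preserves coproducts (Proposition \ref{prodcoprod}), a coproduct in $D^n(A_n)$ is computed in $\hot(A_n)$, and combining this with $\bigoplus_\lambda(X_\lambda)_i\cong(\bigoplus_\lambda X_\lambda)_i$ shows $\Hm{D^n(A_n)}(\Gamma_i,-)$ commutes with coproducts. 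This part is pure bookkeeping.

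The substance is generation, which I would prove as follows. Let $M\in D^n(A_n)$ with $\Hm{D^n(A_n)}(\Gamma_i[m],M)=0$ for all $i\in\{0,\dots,n\}$ and all $m\in\mathbb Z$; the aim is to show $M$ is $n$-acyclic, i.e. $M\cong0$. Using the Hom-isomorphism above (with shifts) together with $\Hm{\hot(A_n)}(\Gamma_i[m],M)=H^{-m}\Hm{A_n}(\Gamma_i,M)$, the hypothesis is equivalent to the acyclicity of every complex $(M)_i$. In particular $(M)_0\cong\Ker t_M[1]$ is acyclic, so $\Gr_K^0(M)=\Ker t_M$ is acyclic; feeding $(M)_0$ and $(M)_i$ into the triangle \eqref{triagen} then forces $\Ker t^i_M/t\Ker t^{i+1}_M$ to be acyclic for every $i$.

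The heart of the argument is then an induction on $i$ showing $\Gr^i_K(M)$ is acyclic (the case $i=0$ being handled above). Suppose $\Gr^j_K(M)$ is acyclic for all $j<i$; since these complexes are precisely the associated graded pieces of the $K$-filtration of the submodule $\Ker t^i_M$, this is exactly the statement that $\Ker t^i_M$ is $(i-1)$-acyclic, whence by Proposition \ref{kacyatcy} the complex $\Gr^0_t(\Ker t^i_M)=\Ker t^i_M/t\Ker t^i_M$ is acyclic. Plugging this into the short exact sequence \eqref{ses1}
\[
0\to t^i\Ker t^{i+1}_M\to \frac{\Ker t^i_M}{t\Ker t^i_M}\to \frac{\Ker t^i_M}{t\Ker t^{i+1}_M}\to 0,
\]
whose left-hand term is $\Gr^i_K(M)$ and whose right-hand term is acyclic by the previous step, shows $\Gr^i_K(M)$ is acyclic. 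Hence $\Gr_K(M)$ is acyclic, $M$ is $n$-acyclic, and the $\Gamma_i$ generate.

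I expect the inductive step to be the one real obstacle. Its subtlety is that for $i\geq2$ the module $\Ker t^i_M$ is itself curved — its predifferential does not square to zero — so one cannot speak of ``acyclicity of $\Ker t^i_M$'' at all; the right move is to phrase the inductive hypothesis as $(i-1)$-acyclicity of $\Ker t^i_M$, recognize the $\Gr^j_K(M)$ for $j<i$ as its $K$-filtration quotients, and invoke Proposition \ref{kacyatcy} to descend to the bottom $t$-adic quotient, which \emph{is} an honest complex. This is exactly the interplay between the $t$-adic and $K$-filtrations that Lemma \ref{submodules} and Proposition \ref{kacyatcy} were set up to control, so once the bookkeeping is arranged no genuinely new input is required.
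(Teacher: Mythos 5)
Your proof is correct: the reduction of the hypothesis to acyclicity of the complexes $(M)_i$, the use of the triangles \eqref{triagen} to force acyclicity of $\Ker t^i_M/t\Ker t^{i+1}_M$, and the inductive step via Proposition \ref{kacyatcy} and the sequence \eqref{ses1} all check out; in particular you are right that the inductive hypothesis \emph{is} the $(i-1)$-acyclicity of $\Ker t^i_M$, since $\Ker t^j_M$ for $j\le i$ computes its $K$-filtration. The route differs from the paper's in the shape of the induction. The paper inducts on the order $n$ of the deformation: it transfers orthogonality of $M$ to $\Gamma_0,\dots,\Gamma_{n-1}$ across the adjunction $F\dashv\Ker t^n$, invokes the theorem for $A_{n-1}$ to conclude that $\Ker t^n_M$ is $(n-1)$-acyclic, and then only treats the top graded piece $t^nM$ using \eqref{triaeq} and \eqref{ses1} for $i=n$. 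You instead fix $n$ and induct on the filtration index $i$, building acyclicity of $\Gr^i_K(M)$ from the bottom up with \eqref{triagen} and \eqref{ses1} for every $i$. These are essentially the same argument unrolled differently: yours is more self-contained (no appeal to the embedding $D^{n-1}(A_{n-1})\hookrightarrow D^n(A_n)$, to the adjunction isomorphisms, or to the $(n-1)$-homotopy projectivity of the $\Gamma_i$ over $A_{n-1}$), whereas the paper's version exhibits the recursive structure of the categories and reuses the generation statement for the smaller deformation as a black box.
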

    
\begin{proof}
    By induction on $n$, the case $n=0$ being the usual statement that $A\in D(A)$ is a generator. Let $M$ be a module such that $\Hm{D^n(A_n)}(\Gamma_i, M[l])=0$ for all $i$ and $l\in \mathbb{Z}$. Since the modules $\Gamma_i$ are $n$-homotopy projective, this implies that $  \Hm{A_n}(\Gamma_i, M)$ is acyclic for all $i$. Since the modules $\Gamma_0,  \ldots, $ $\Gamma_{n-1}$ all lie in the image of the forgetful functor from $A_{n-1}\Mod$ and are $(n-1)$-homotopy projective as $A_{n-1}$-modules, we know that \[
   0=\Hm{\hot(A_n)}(\Gamma_i, M[l])\cong \Hm{\hot(A_{n-1})}(\Gamma_i, \Ker t^n_M[l])\cong \Hm{D^{n-1}(A_{n-1})}(\Gamma_i, \Ker t^n_M[l])
   \] for all $l$ and $i<n$, so $\Ker t^n_M$ is $(n-1)$-acyclic by the inductive hypothesis; in particular, $\Ker t_M$ is acyclic. Then, since $(M)_n$ and $\Ker t_M[1]$ are acyclic, the triangle \eqref{triaeq} guarantees that $\frac{\Ker t^{n}_M}{t M}$ is acyclic and by the short exact sequence (\ref{ses1}) for the case $i=n$ this implies that $t^n M$ is acyclic; finally, the $(n-1)$-acyclicity of $\Ker t^n_M$ together with the acyclicity of $t^nM$ means that $\Gr_K(M)$ is acyclic and therefore $M$ is $n$-acyclic.
  \end{proof}
   \begin{example}\label{gradedfield}
       Assume $k$ to be a field, and let $k[u, u^{-1}]$ be the graded field of \cite{MoritaDef} with $u$ in degree $2$; let $k_u[u, u^{-1}]$ be the deformation over $R_1$ corresponding to the Hochschild cocycle $u\in k[u, u^{-1}]^2$. The category $D^1(k_u[u, u^{-1}])$ has as generators the two $k_u[u, u^{-1}]$-modules \[
       \Gamma_0= \ldots \to 0 \to k \to 0 \to k \to 0 \to k \to \ldots
       \] and \[
       \Gamma_1=
     \ldots\to k \tow{t} R_1 \tow{t\,=\,0}k\tow{t}R_1 \to \ldots 
       \]
    It turns out - and this is a peculiarity of the deformation $k_u[u, u^{-1}]$ - that these generators are actually \emph{orthogonal} to each other, i.e. there is no nonzero morphism in either direction. The endomorphism ring of $\Gamma_0$ is isomorphic to the base $k[u, u^{-1}]$ and the one of $\Gamma_1$ is quasi-isomorphic to the same algebra. In Section \ref{parsemiort} we will explain this kind of behaviour in greater generality.
   \end{example}
   \section{Resolutions}\label{parres}
   Using the fact that $D^n(A_n)$ has a set of $n$-homotopy projective compact generators we can already deduce formally the existence of $n$-homotopy projective resolutions. That said, we will still give a direct construction (Corollary \ref{excell}) which gives some insight into the explicit shape of these resolutions. This explicit construction also adapts to give a proof of the existence of $n$-homotopy injective resolutions (Proposition \ref{injres}). 
\subsection{Projective resolutions}

 Let $X=\{X_i\}\subseteq A_n\Mod$ be a set of $A_n$-modules which are compact as objects of $\hot(A_n)$.

\begin{definition}
    An $A_n$-module $P$ is \emph{$X$-semifree} if there exists a filtration \[
    0=F_0P \subseteq F_1 P \subseteq \ldots \subseteq P
    \] such that:
    
    \begin{itemize}
        \item $P=\cup_i F_i P$;
        \item The inclusions $F_iP \hookrightarrow F_{i+1}P$ split as morphisms of graded modules;
        \item $F_{i+1}P/F_iP$ is isomorphic to a direct sum of copies of shifts of elements of $X$.
    \end{itemize}

\end{definition}
We'll say that an $A_n$-module is \emph{$n$-semifree} if it is $\{\Gamma_0, \ldots \Gamma_n\}$-semifree.
\begin{definition}
    A module $M\in \hot(A_n)$ is said to be \emph{$X$-cell} if it lies in the minimal localizing subcategory of $\hot(A_n)$ containing $X$. 
\end{definition}
We'll say that an $A_n$-module $M$ is \emph{$n$-cell} if it is $\{\Gamma_0, \ldots \Gamma_n\}$-cell. A module $N\in \hot(A)$ is \emph{$A$-cell} if it lies in the minimal localizing subcategory of $\hot(A)$ containing the $A$-module $A$; 
\begin{lemma}\label{Xsemifree}
    Any $X$-semifree module is $X$-cell.
\end{lemma}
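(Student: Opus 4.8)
The idea is to realise $P$ as a homotopy colimit of its finite filtration stages, each of which is visibly $X$-cell, and then to invoke closure of localizing subcategories under shifts, coproducts and cones. Throughout, write $\cL$ for the minimal localizing subcategory of $\hot(A_n)$ containing $X$, so that by definition a module is $X$-cell precisely when it lies in $\cL$.

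First I would note that each subquotient $Q_i := F_iP/F_{i-1}P$ (for $i\geq 1$) is $X$-cell: by hypothesis it is a direct sum of shifts of objects of $X$, and $\cL$ is closed under shifts and small coproducts. Next, an induction on $i$ shows $F_iP\in\cL$ for every $i$. Indeed $F_0P=0\in\cL$, and since the inclusion $F_{i-1}P\hookrightarrow F_iP$ splits as a morphism of graded $A_n$-modules, the short exact sequence $0\to F_{i-1}P\to F_iP\to Q_i\to 0$ is graded-split, hence underlies a triangle $F_{i-1}P\to F_iP\to Q_i\to F_{i-1}P[1]$ in $\hot(A_n)$; as $\cL$ is a triangulated subcategory containing $F_{i-1}P$ (inductive hypothesis) and $Q_i$, it contains $F_iP$.

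Finally, since $P=\bigcup_i F_iP$ and every inclusion $F_iP\hookrightarrow F_{i+1}P$ splits degreewise, the graded module underlying $P$ is $\bigoplus_{i\geq 1}Q_i$ with $F_iP\cong\bigoplus_{j\leq i}Q_j$; feeding these graded splittings into the telescope construction produces a short exact sequence
\begin{equation*}
0\longrightarrow \bigoplus_{i\geq 0}F_iP \;\tow{1-\sigma}\; \bigoplus_{i\geq 0}F_iP \longrightarrow P\longrightarrow 0
\end{equation*}
which is split in every degree; here $\sigma$ sends the $i$-th summand identically onto its image in the $(i+1)$-st summand under $F_iP\hookrightarrow F_{i+1}P$. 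A degreewise-split short exact sequence underlies a triangle, so we obtain
\begin{equation*}
\bigoplus_{i\geq 0}F_iP \;\tow{1-\sigma}\; \bigoplus_{i\geq 0}F_iP \longrightarrow P\longrightarrow \Big(\bigoplus_{i\geq 0}F_iP\Big)[1]
\end{equation*}
in $\hot(A_n)$, exhibiting $P$ as $\hocolim_i F_iP$. The coproduct $\bigoplus_i F_iP$ lies in $\cL$ because $\cL$ is localizing and each $F_iP$ is $X$-cell, and then $P$, being the cone of a morphism between objects of $\cL$, also lies in $\cL$. Hence $P$ is $X$-cell.

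I expect the only point requiring genuine care to be the verification that the telescope sequence is split in each degree — equivalently, that $1-\sigma$ is a split monomorphism of graded $A_n$-modules — and this is exactly where the hypothesis that the filtration inclusions are graded-split gets used; everything else is formal bookkeeping inside $\cL$. (Alternatively one can sidestep the explicit splitting by appealing to the standard fact that Milnor colimits of degreewise-split towers compute homotopy colimits in $\hot(A_n)$.)
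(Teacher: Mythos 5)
Your argument is correct and is essentially the paper's own proof: the paper likewise shows each $F_iP$ is $X$-cell by induction using the graded-split triangles, and then exhibits $P$ via the graded-split telescope sequence $0\to\bigoplus_i F_iP\to\bigoplus_i F_iP\to P\to 0$ borrowed from the Stacks Project argument you are reproducing. Nothing further is needed.
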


\begin{proof}
   Same proof as \cite[\href{https://stacks.math.columbia.edu/tag/09KL}{09KL}]{stacks-project}, \textit{mutatis mutandi}: assume that $P$ is an $X$-semifree $A_n$-module. It is easy to see by induction that each $F_iP$ is $X$-cell, and to see that $P$ is itself $X$-cell one uses the existence of the graded split short exact sequence \[
    0 \to \bigoplus_i F_iP \to \bigoplus_i F_iP \to P \to 0.
    \] defined as in \cite[\href{https://stacks.math.columbia.edu/tag/09KL}{09KL}]{stacks-project}.
        
\end{proof}
\begin{remark}
    If an $A_n$-module admits a filtration as in the definition of $X$-semifree with the exception of the quotients being $X$-cell instead of a coproduct of shifted copies of $X_i$, one can use the same argument to prove that it is itself $X$-cell.
\end{remark}
\begin{proposition}
    All $n$-cell $A_n$-modules are $n$-homotopy projective.
\end{proposition}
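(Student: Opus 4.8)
The plan is to show that the full subcategory of $n$-homotopy projective $A_n$-modules is a localizing subcategory of $\hot(A_n)$ and then invoke minimality of the subcategory of $n$-cell modules. Recall that, by the remark following the definition of $n$-homotopy projective, an $A_n$-module is $n$-homotopy projective precisely when it lies in $\prescript{\perp}{}{\operatorname{Ac}^n}$.

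The first step is the general observation that, for \emph{any} class $\cS$ of objects in a triangulated category with small coproducts, the left orthogonal $\prescript{\perp}{}{\cS}$ is automatically a localizing subcategory. It is a full subcategory closed under shifts; it is closed under cones, since applying $\Hm{\hot(A_n)}(-,N)$ to a triangle yields a long exact sequence in which the middle term vanishes once its two neighbours do; it is closed under retracts, because $\Hm{\hot(A_n)}(-,N)$ carries a retract to a retract of the zero module; and it is closed under small coproducts, because $\Hm{\hot(A_n)}(\bigoplus_i P_i, N)\cong \prod_i \Hm{\hot(A_n)}(P_i,N)$, which vanishes as soon as every factor does. (One could alternatively appeal to Proposition \ref{prodcoprod} to see that $\operatorname{Ac}^n$ itself is coproduct-closed, but as the computation shows this is not actually needed here.)

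The second step is simply to recall that we have already proved $\Gamma_0, \ldots, \Gamma_n$ to be $n$-homotopy projective, i.e. $\Gamma_0, \ldots, \Gamma_n \in \prescript{\perp}{}{\operatorname{Ac}^n}$. Thus $\prescript{\perp}{}{\operatorname{Ac}^n}$ is a localizing subcategory of $\hot(A_n)$ containing $\{\Gamma_0, \ldots, \Gamma_n\}$, hence it contains the minimal localizing subcategory with this property, which is by definition exactly the class of $n$-cell modules. This gives the claim.

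I do not expect any real obstacle: the argument is formal, and the only points requiring (routine) care are the verification that $\prescript{\perp}{}{\operatorname{Ac}^n}$ is genuinely triangulated and coproduct-closed — the standard orthogonality computation sketched above — and the observation that $n$-homotopy projectivity has indeed been established for every generator $\Gamma_i$, not merely for $A=\Gamma_0$. A completely analogous argument, with $\hot(A)$ and the single module $A$ in place of $\hot(A_n)$ and the $\Gamma_i$, shows that every $A$-cell module is homotopy projective, which is the classical fact this statement generalizes.
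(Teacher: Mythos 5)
Your proof is correct and follows essentially the same route as the paper, which simply notes that the $\Gamma_i$ are $n$-homotopy projective and that $\prescript{\perp}{}{\operatorname{Ac}^n}$ is closed under the operations defining a localizing subcategory, hence contains every $n$-cell module. Your version merely spells out the standard orthogonality verifications in more detail.
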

\begin{proof}
    The first claim follows from the fact that $\Gamma_0,\ldots, \Gamma_n$ are $n$-homotopy projective and that a coproduct of $n$-homotopy projectives is still $n$-homotopy projective.
\end{proof}
In particular, for the case $n=0$ we recover the classical fact that all $A$-cell modules are homotopy projective.
\begin{remark}
It is well known that the property of being $A$-cell coincides with that of being homotopy projective as an $A$-module. By the end of this section, we will see that this generalizes to the filtered setting.
\end{remark}
\begin{lemma}\label{nprojproj}
    If $M\in \hot(A_n)$ is $n$-cell, then both $\Gr_t(M)$ and $\Gr_K(M)$ are $A$-cell and therefore homotopy projective.
\end{lemma}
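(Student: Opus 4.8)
The plan is to transport the generating set of $n$-cell modules through the reduction functors and fall back to a computation on the generators $\Gamma_0,\dots,\Gamma_n$. By definition the $n$-cell modules are the smallest localizing subcategory of $\hot(A_n)$ containing $\Gamma_0,\dots,\Gamma_n$, and the $A$-cell modules are the smallest localizing subcategory of $\hot(A)$ containing $A$; moreover every $A$-cell module is homotopy projective, since $A$ is homotopy projective and the homotopy projectives are closed under coproducts, cones and shifts (this is the $n=0$ instance of the proposition above). So it is enough to show that $\Gr_t(M)$ and $\Gr_K(M)$ are $A$-cell whenever $M$ is $n$-cell.

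First I would record that for each $0\le j\le n$ the reductions $\Gr_t^j,\Gr_K^j\colon \hot(A_n)\to\hot(A)$ are triangulated functors commuting with arbitrary coproducts: the coproduct statement is the remark following Proposition~\ref{prodcoprod}, and the triangulated statement follows from the identity $\Gr_t^j(\Cn f)\cong\Cn(\Gr_t^j f)$ used in its proof together with the obvious compatibility with shifts (the case of $\Gr_K^j$ is identical, e.g.\ via $\Gr_K^j\cong \img t^j\circ\Ker t^{j+1}$). For any coproduct-preserving triangulated functor $F$ the preimage $F^{-1}(\mathcal L)$ of a localizing subcategory $\mathcal L$ is again localizing; hence if $F$ carries each $\Gamma_i$ into the localizing subcategory of $A$-cell modules, it carries every $n$-cell module there too. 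Applying this to $F=\Gr_t^j$ for all $j$, and then taking the finite coproduct over $j$ (using that $A$-cell modules are closed under coproducts), gives that $\Gr_t(M)$ is $A$-cell; likewise for $\Gr_K$.

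What remains is to check that $\Gr_t^j(\Gamma_i)$ and $\Gr_K^j(\Gamma_i)$ are $A$-cell for all $i,j$, which I would do by unwinding the twisted module $\Gamma_i=(A_i\oplus A_{i-1}[1],\gamma_i)$. Since $t$ acts diagonally on the underlying graded module, $\Gr_t^j(\Gamma_i)=\tfrac{t^jA_i}{t^{j+1}A_i}\oplus\tfrac{t^jA_{i-1}}{t^{j+1}A_{i-1}}[1]$: it vanishes for $j>i$, equals $A$ for $j=i$, and for $j<i$ is $A\oplus A[1]$ as a graded $A$-module. In that last case the lower-left entry $t$ of $\gamma_i$ strictly raises the $t$-adic degree and therefore induces $0$ on the associated graded, while the upper-right entry $\pi\circ\tfrac{c}{t}$ maps the first summand into the second; consequently $\{0\}\oplus A[1]$ is a subcomplex with quotient $A$, and the resulting short exact sequence of complexes $0\to A[1]\to\Gr_t^j(\Gamma_i)\to A\to 0$ splits as graded $A$-modules, so it is a triangle in $\hot(A)$ and $\Gr_t^j(\Gamma_i)$ lies in the triangulated subcategory generated by $A$, a fortiori is $A$-cell. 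For $\Gr_K$ I would use $\Gr_K^j(\Gamma_i)\cong t^j\Ker t^{j+1}_{\Gamma_i}$ together with $\Ker t^m_{A_\ell}=t^{\max(0,\ell+1-m)}A_\ell$; one finds $\Gr_K^j(\Gamma_i)\cong t^iA_i\oplus t^{i-1}A_{i-1}[1]$ for $j<i$, on which the term $\pi\circ\tfrac{c}{t}$ of $\gamma_i$ vanishes (because $\pi$ annihilates the top power $t^iA_i$), leaving a differential that exhibits $\Gr_K^j(\Gamma_i)$ as a contractible complex (the cone of $\id_A$), while $\Gr_K^i(\Gamma_i)\cong A$ and $\Gr_K^j(\Gamma_i)=0$ for $j>i$. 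In every case these are $A$-cell.

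The only real work is this last computation: carefully tracking which graded pieces of $A_i$ and $A_{i-1}$ are nonzero, checking that each off-diagonal entry of $\gamma_i$ lands where claimed after passing to $\Gr_t$ (respectively to $t^j\Ker t^{j+1}$), and verifying that the short exact sequences split over the graded ring. This is the point where the standing hypotheses — that $c$ is divisible by $t$ and that the $A_\ell$ are $R_\ell$-free — enter, but none of it is difficult.
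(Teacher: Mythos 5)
Your argument is essentially the paper's own: both proofs observe that the modules with $A$-cell associated graded form a localizing (triangulated, coproduct-closed) subcategory of $\hot(A_n)$ — using that $\Gr_t(-)$ and $\Gr_K(-)$ are coproduct-preserving triangulated functors — and that it contains the generators $\Gamma_0,\dots,\Gamma_n$. The only difference is that the paper declares the containment of the $\Gamma_i$ ``immediate,'' whereas you carry out the (correct) explicit computation of $\Gr_t^j(\Gamma_i)$ and $\Gr_K^j(\Gamma_i)$.
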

\begin{proof}
    It is immediate to see that the subcategory of $\hot(A_n)$ given by the modules with $A$-cell associated graded is a triangulated subcategory containing $\Gamma_0, \ldots \Gamma_n$. To conclude we need to show that it is also closed under coproducts, but this follows from the fact that that $\Gr_t(-)$ and $\Gr_K(-)$ commute with coproducts.
\end{proof}

If $M$ is an $A_n$-module, an $n$-cell (resp.  an $n$-semifree) resolution of $M$ is an $n$-cell (resp. an $n$-semifree) $A_n$-module $P$ equipped with a $n$-quasi-isomorphism $P\to M$.
\begin{lemma}\label{cellmodules}
    If $P\to M$ is an $n$-cell resolution of $M\in A_n\Mod$, then \[
    \Gr_t(P)\to \Gr_t(M)
    \] is a homotopy projective resolution of $\Gr_t(M)$ in $\hot(A)$; the same holds for $\Gr_K(-)$.
\end{lemma}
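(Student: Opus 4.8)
The plan is to unwind the definitions and reduce the statement to Lemma~\ref{nprojproj} together with the elementary fact that $\Gr_t^i(-)$ and $\Gr_K^i(-)$ commute with the formation of cones of closed morphisms. Recall that an $n$-cell resolution of $M$ is an $n$-cell $A_n$-module $P$ equipped with an $n$-quasi-isomorphism $f\colon P\to M$, i.e.\ one whose cone is $n$-acyclic. There are then two things to check: that $\Gr_t(P)$ is homotopy projective over $A$, and that $\Gr_t(f)$ is a quasi-isomorphism; and the same with $\Gr_K$ in place of $\Gr_t$.

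The first point is immediate: since $P$ is $n$-cell, Lemma~\ref{nprojproj} tells us that $\Gr_t(P)$ and $\Gr_K(P)$ are $A$-cell, hence homotopy projective.

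For the second point, the key observation — already used in the proof of Proposition~\ref{prodcoprod} — is that $\Gr_t^i(-)$ commutes with cones. Indeed, since $f$ is $A_n$-linear it commutes with the action of $t$, so $t$ acts diagonally on $\Cn(f)=P[1]\oplus M$; hence $t^i\Cn(f)=t^iP[1]\oplus t^iM$, which produces a natural isomorphism of complexes of $A$-modules $\Gr_t^i(\Cn f)\cong\Cn(\Gr_t^i f)$. The same computation, now using $\Ker t^i_{\Cn(f)}=\Ker t^i_P[1]\oplus\Ker t^i_M$, gives $\Gr_K^i(\Cn f)\cong\Cn(\Gr_K^i f)$. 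Since $f$ is an $n$-quasi-isomorphism, $\Cn(f)$ is $n$-acyclic, so each $\Gr_t^i(\Cn f)=\Cn(\Gr_t^i f)$ is acyclic, i.e.\ each $\Gr_t^i(f)$ is a quasi-isomorphism; as $\Gr_t(f)=\bigoplus_{i=0}^n\Gr_t^i(f)$ and a finite direct sum of quasi-isomorphisms is a quasi-isomorphism, $\Gr_t(f)\colon\Gr_t(P)\to\Gr_t(M)$ is a quasi-isomorphism. The argument for $\Gr_K$ is identical (alternatively one can invoke Proposition~\ref{kacyatcy} to transfer acyclicity of $\Gr_t(\Cn f)$ to $\Gr_K(\Cn f)$).

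Putting the two points together, $\Gr_t(f)$ is a quasi-isomorphism with homotopy-projective source, hence a homotopy projective resolution of $\Gr_t(M)$ in $\hot(A)$, and similarly for $\Gr_K$. I do not expect a genuine obstacle here; the only step requiring a little care — and it is entirely routine — is the commutation of $\Gr_t^i$ and $\Gr_K^i$ with cones, which ultimately rests on the fact that $t$ acts diagonally on the cone of an $A_n$-linear morphism.
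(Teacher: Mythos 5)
Your proof is correct and follows the same route as the paper: homotopy projectivity of the associated graded comes from Lemma~\ref{nprojproj}, and the quasi-isomorphism statement is exactly what the paper dismisses as holding ``by construction'' (via the commutation of $\Gr_t^i$ and $\Gr_K^i$ with cones, already recorded in the proof of Proposition~\ref{prodcoprod}). You have merely spelled out the details the paper leaves implicit.
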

\begin{proof}
   By construction $\Gr_t(P)\to \Gr_t(M)$ is a quasi-isomorphism; moreover by Lemma \ref{nprojproj}, $\Gr_t(P)$ is a homotopy projective $A$-module. The case of $\Gr_K(-)$ is identical.
\end{proof}

The construction of $n$-cell resolutions uses the following procedure which is a generalization of the classical homotopy projective resolution of a dg module due to Keller \cite{Derivingdgcat}\cite[\href{https://stacks.math.columbia.edu/tag/09KK}{09KK}]{stacks-project}.

\subsubsection*{Construction of projective resolutions}

    \begin{lemma}\label{surjection}
        For every $A_n$-module $M$, there exists an $X$-semifree module $Q_X$ with a morphism $Q_X \to M$ inducing surjections
        \[
    \Hm{A_n}(X_i, Q_X)\to \Hm{A_n}(X_i,M).
    \] and
        \[
    \Ker d_{\Hm{A_n}(X_i, Q_X)} \to \Ker d_{\Hm{A_n}(X_i,M)}.
    \] for all $i$.
    \end{lemma}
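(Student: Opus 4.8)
The plan is to realise $Q_X$ by the classical ``cell attachment'' construction of semifree resolutions (as in \cite{Derivingdgcat} and \cite[\href{https://stacks.math.columbia.edu/tag/09KK}{Tag 09KK}]{stacks-project}), modified so that only shifted copies of the $X_i$ — and of auxiliary contractible modules built from them — are used, since the free module $A_n$ is not available as a building block. Concretely, I would take
\[
Q_X \ = \ \Bigl(\, \bigoplus_i\ \bigoplus_{g\,\in\, Z^\bullet\Hm{A_n}(X_i,M)} X_i[-\deg g]\,\Bigr)\ \oplus\ \Bigl(\, \bigoplus_i\ \bigoplus_{f\,\in\, \Hm{A_n}(X_i,M)^\bullet} \Cn(\id_{X_i})[-\deg f-1]\,\Bigr),
\]
where $Z^\bullet$ denotes the graded submodule of closed morphisms and $\Cn(\id_{X_i})$ is the cone of the identity of $X_i$; this is again a genuine (contractible) cdg $A_n$-module, because cones of closed morphisms, shifts and coproducts all preserve the defining relation $d_M^2m=cm$ of cdg modules. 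The structure map $Q_X\to M$ is defined summand by summand: on the copy indexed by a closed $g$ it is $g$ itself (a closed degree-zero morphism after reindexing); on the copy indexed by an arbitrary homogeneous $f$ it is the closed degree-zero morphism $\Cn(\id_{X_i})[-\deg f-1]\to M$ whose two components, under the graded splitting $\Cn(\id_{X_i})=X_i\oplus X_i[1]$, are (reindexings of) $\pm df$ and $f$ — the sign being forced by the formula $(df)m=d_M(fm)-(-1)^{\deg f}f(d_{X_i}m)$ for the $\Hom$-differential, so that this morphism is indeed closed. Since $Q_X$ is an honest coproduct, these pieces assemble to a well-defined closed morphism $Q_X\to M$.

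First I would check that $Q_X$ is $X$-semifree: it carries the two-step filtration whose first piece $F_1$ is the direct sum of all the copies $X_i[-\deg g]$ together with the graded submodules $X_i[-\deg f-1]\subseteq\Cn(\id_{X_i})[-\deg f-1]$, with quotient $Q_X/F_1\cong\bigoplus X_i[-\deg f]$; both $F_1$ and the quotient are coproducts of shifts of elements of $X$, and $F_1\hookrightarrow Q_X$ is graded-split since each cone splits as a graded module. Next, surjectivity on cycles: for a closed $g\colon X_i\to M$, the composite $X_i\xrightarrow{\ \mathrm{id}\ }X_i[-\deg g]\hookrightarrow Q_X$ (the first arrow the degree-$\deg g$ ``identity'', which is closed, the second a summand inclusion) is a closed morphism in $\Hm{A_n}(X_i,Q_X)$ whose image under $Q_X\to M$ is $g$; hence $Z^\bullet\Hm{A_n}(X_i,Q_X)\to Z^\bullet\Hm{A_n}(X_i,M)$ is onto. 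Finally, surjectivity on $\Hm{A_n}(X_i,-)$: for an arbitrary homogeneous $f\colon X_i\to M$, the composite $X_i\xrightarrow{\ \mathrm{id}\ }X_i[-\deg f]\hookrightarrow\Cn(\id_{X_i})[-\deg f-1]\hookrightarrow Q_X$ — where $X_i[-\deg f]=X_i[1][-\deg f-1]$ is included as the second graded summand of the cone — is a homogeneous morphism in $\Hm{A_n}(X_i,Q_X)$ of degree $\deg f$ whose image under $Q_X\to M$ is exactly $f$.

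This is a bookkeeping argument rather than one with a real obstacle; the only points requiring care are the sign in the definition of $Q_X\to M$ on the cone summands (equivalently, identifying $\Hm{A_n}(\Cn(\id_{X_i}),M)$ with the cone of the identity on $\Hm{A_n}(X_i,M)$ and noting which of its projections captures all cycles) and the verification that everything remains in the category of honest cdg $A_n$-modules — both of which are routine. I note that compactness of the $X_i$ is not used in this lemma; it will enter only when the construction is iterated to produce actual $n$-quasi-isomorphisms (the $n$-cell resolutions of Corollary \ref{excell}).
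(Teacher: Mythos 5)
Your construction is essentially the paper's: copies of $X_i$ indexed by closed morphisms give surjectivity on cycles, and contractible cones/cocones of $\id_{X_i}$ indexed by arbitrary homogeneous morphisms (your $\Cn(\id_{X_i})[-\deg f-1]$ is the paper's cocone $C_{X_i}$ suitably shifted, and your pair $(\pm df, f)$ is the paper's closed morphism $(g^q,dg^q)$) give surjectivity on all of $\Hm{A_n}(X_i,-)$, with the same two-step filtration witnessing $X$-semifreeness. The only cosmetic differences are that you index over all morphisms in all degrees at once rather than over a generating set degree by degree, so the argument is correct and matches the paper.
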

    \begin{proof}

    By definition, $(\Ker d_{\Hm{A_n}(X_i,M)})^0=Z^0\Hm{A_n}(X_i,M)$. Choose a set $\{f^p\}$ of generators of $Z^0\Hm{A_n}(X_i,M)$ as an abelian group; each $f^p$ defines tautologically a closed morphism \[f^p\colon X_i \to M\] with the property that $f^p$ lies in the image of  \[
     f^p_*\colon Z^0\Hm{A_n}(X_i, X_i) \tow{}Z^0\Hm{A_n}(X_i,M)
    \] as the pushforward of the identity.
    Defining $Q_{X_i}'=\oplus_p X_i$, the collection $\{f^p\}$ defines a closed morphism
    $F\colon Q_{X_i}'\to M$ such that the precomposition with the inclusion in the $p$-th factor $X_i\to \bigoplus_p X_i$ equals $f^p$; it follows that $F$ induces a surjection
    \[Z^0\Hm{A_n}(X_i, Q_{X_i}) \to Z^0\Hm{A_n}(X_i,M).\]
    To get surjectivity at every degree, repeat the same procedure adding appropriate shifts of $X_i$ to $Q_{X_i}'$.
    For the surjectivity on arbitrary morphisms one proceeds similarly: let $C_{X_i}$ be the cocone of the identity of $X_i$. Let $\{g^q\}$ be a set of (not necessarily closed) generators of $\Hm{A_n}(X_i,M)^0$ as an abelian group. The pairs $(g^q, dg^q)$ define closed morphisms $\gamma^q\colon C_{X_i}\to M$, and there is a natural non-closed morphism $\kappa\colon X_i\to C_{X_i}$ with the property that the composition with the inclusion into the $q$-th factor 
    $X_i \tow{\iota} C_{X_i} \tow{\gamma^q} M$ equals $g^q$; therefore $g^q$ lies in the image of the composition \[
    \Hm{A_n}(X_i,X_i)^0 \tow{\kappa_*}\Hm{A_n}(X_i,C_{X_i})^0\tow{\gamma^j_*}\Hm{A_n}(X_i, M)^0
    \]as the image of the identity of $X_i$. Setting again $Q_{X_i}''=\oplus_q C_{X_i}$, we have an induced map $G\colon Q_{X_i}'' \to M$ which gives rise to a surjection \[
    \Hm{A_n}(X_i, Q_{X_i}'')^0 \to \Hm{A_n}(X_i, M)^0;
    \]as before, repeat the whole procedure with shifts of $C_{X_i}$ to obtain surjectivity at all degrees. To get the desired morphism, just set \[Q_X=\bigoplus_i Q_{X_i}'\oplus Q_{X_i}''\] and consider the induced map to $M$; $Q_{X}$ has a $2$-step filtration as in the definition of $X$-semif ree so it is $X$-cell, and the morphism \[
    \Hm{A_n}(X_i,Q_{X_i})\to \Hm{A_n}(X_i, M)
    \] is both surjective and surjective when restricted to the cycles.
            
    \end{proof}
        \begin{proposition}\label{res1}
    For any $A_n$-module $M$ there exists an $X$-semifree module $P_X$ with a closed morphism \[
    P_X\to M
    \] inducing a quasi-isomorphism \[
    \Hm{A_n}(X_i, P_X)\tow{\sim}\Hm{A_n}(X_i, M).
    \] for all $i$.
    
\end{proposition}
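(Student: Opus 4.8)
The plan is to iterate Lemma~\ref{surjection}, adding one layer of cells at each stage so that the comparison map becomes a quasi-isomorphism only in the colimit; this is the curved counterpart of Keller's construction of homotopy projective resolutions \cite{Derivingdgcat}, \cite[\href{https://stacks.math.columbia.edu/tag/09KK}{Tag 09KK}]{stacks-project}. Set $P_1 = Q_X$ with $p_1\colon P_1\to M$ as produced by Lemma~\ref{surjection}. Given a closed morphism $p_j\colon P_j\to M$ with $P_j$ an $X$-semifree module, form the cone $\Cn(p_j)$, which is again a cdg $A_n$-module (the cone of a closed morphism of cdg modules), apply Lemma~\ref{surjection} to it to get an $X$-semifree $Q^{(j)}$ with $q_j\colon Q^{(j)}\to\Cn(p_j)$ surjective on $\Hm{A_n}(X_i,-)$ and on cocycles for all $i$, and compose a shift of $q_j$ with the canonical morphism $\Cn(p_j)\to P_j[1]$ to obtain a closed $s_j\colon Q^{(j)}[-1]\to P_j$. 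Put $P_{j+1}=\Cn(s_j)$: as a graded module $P_{j+1}=P_j\oplus Q^{(j)}$, so $P_j\hookrightarrow P_{j+1}$ is graded-split and $P_{j+1}$ is $X$-semifree (refine the filtration of $P_j$ by the sum-of-shifts term $Q^{(j)}$), and since $p_j\circ s_j$ is nullhomotopic via the $M$-component of $q_j$, the map $p_j$ extends to a closed $p_{j+1}\colon P_{j+1}\to M$ built from that nullhomotopy. Finally set $P_X=\colim_j P_j$, an $X$-semifree module, with induced closed morphism $p\colon P_X\to M$.

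To verify that $p$ does the job, apply the $3\times 3$ lemma to the morphism of triangles from $\bigl(Q^{(j)}[-1]\xrightarrow{s_j}P_j\to P_{j+1}\to Q^{(j)}\bigr)$ to the triangle $\bigl(0\to M\xrightarrow{\id}M\to 0\bigr)$ along $0,p_j,p_{j+1},0$: the cones of the vertical maps assemble into a triangle $Q^{(j)}\to\Cn(p_j)\to\Cn(p_{j+1})\to Q^{(j)}[1]$ whose first morphism is, up to sign and shift, precisely $q_j$ (its datum is $s_j$ together with the nullhomotopy used to build $p_{j+1}$, which is exactly $q_j$). Applying the triangulated functor $N\mapsto\Hm{A_n}(X_i,N)$ from $\hot(A_n)$ to $D(R_n)$ (it sends graded-split triangles to triangles) and passing to cohomology: since $q_j$ is surjective on cocycles in every degree, $(q_j)_*$ is surjective on $H^n$ for all $n$, whence the long exact sequence forces every transition map $H^n\Hm{A_n}(X_i,\Cn(p_j))\to H^n\Hm{A_n}(X_i,\Cn(p_{j+1}))$ to vanish. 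Using compactness of the $X_i$ together with the telescope presentation $0\to\bigoplus_j P_j\to\bigoplus_j P_j\to P_X\to 0$, one gets $H^n\Hm{A_n}(X_i,\Cn(p))\cong\colim_j H^n\Hm{A_n}(X_i,\Cn(p_j))$, which vanishes because all transition maps in the system do. Hence $p$ induces a quasi-isomorphism $\Hm{A_n}(X_i,P_X)\xrightarrow{\sim}\Hm{A_n}(X_i,M)$ for every $i$.

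I expect the difficulties to be bookkeeping rather than conceptual. The one genuinely load-bearing computation is the identification of the connecting morphism of $\Cn(p_j)\to\Cn(p_{j+1})\to Q^{(j)}[1]$ with $q_j$, i.e. getting the shifts and signs right in the $3\times 3$ lemma; everything downstream is then formal. Two further points to be careful about: that the construction never leaves the category of \emph{cdg} (as opposed to merely qdg) $A_n$-modules --- cones of closed morphisms of cdg modules, graded-split extensions of cdg modules by coproducts of shifts of the $\Gamma_j$, and filtered colimits along graded-split monomorphisms of cdg modules are all cdg; and the passage to the colimit of hom-\emph{complexes}, for which compactness of the $X_i$ (equivalently, that $\bigoplus_\lambda\Hm{A_n}(X_i,M_\lambda)\to\Hm{A_n}(X_i,\bigoplus_\lambda M_\lambda)$ is a quasi-isomorphism, as already exploited in the compactness proof through the isomorphisms $\bigoplus_\lambda(M_\lambda)_i\cong(\bigoplus_\lambda M_\lambda)_i$) applied to the telescope suffices. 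It is also worth recording explicitly that $P_X$ admits a single exhaustive filtration whose subquotients are coproducts of shifts of the $X_i$, obtained by interleaving the filtrations of the $P_j$.
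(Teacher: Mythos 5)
Your proposal is correct, but it assembles the iteration of Lemma~\ref{surjection} differently from the paper. The paper's proof iterates the lemma on \emph{kernels}: it builds an augmented sequence $\ldots \to P_1 \to P_0 \to M$ for which both $\Hm{A_n}(X_i,-)$ and $\Ker d_{\Hm{A_n}(X_i,-)}$ yield exact sequences, sets $P_X=\Tot^\oplus(\ldots\to P_1\to P_0)$, and concludes by the acyclicity of the totalization of such a doubly exact, one-sided-bounded complex of complexes (this is where \emph{both} surjectivity statements of Lemma~\ref{surjection} are load-bearing: exactness of the Hom-rows alone would not suffice for $\Tot^\oplus$ of a left-unbounded double complex to be acyclic). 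You instead iterate the lemma on \emph{cones}, attaching $Q^{(j)}$ along $s_j$ to form an increasing graded-split tower $P_1\subseteq P_2\subseteq\ldots$, identify the successive quotients of the cones via the octahedron/$3\times 3$ lemma with the maps $q_j$, and finish with a Milnor telescope plus the observation that surjectivity on cohomology forces all transition maps on $H^n\Hm{A_n}(X_i,\Cn(p_j))$ to vanish. Your identification of the connecting map with $q_j$ is the correct key computation (concretely, $\Cn(p_j)=M\oplus P_j[1]$ is a graded-split sub-cdg-module of $\Cn(p_{j+1})=M\oplus P_j[1]\oplus Q^{(j)}[1]$ with quotient $Q^{(j)}[1]$, and the off-diagonal twist is exactly $q_j$ up to sign), and your remarks about staying inside cdg modules are all valid. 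The trade-off: your route needs the octahedral axiom and the telescope but only uses surjectivity of $q_j$ on cocycles (surjectivity on all of $\Hm{A_n}(X_i,-)$ is not needed), whereas the paper's route avoids both but relies on the full strength of Lemma~\ref{surjection} and on the totalization lemma. Your version is in fact closer to the cell-attachment argument of \cite[\href{https://stacks.math.columbia.edu/tag/09KK}{09KK}]{stacks-project}, which the paper cites as a blueprint, while the written proof follows Keller's totalization. Both yield the same conclusion, including the interleaved exhaustive filtration exhibiting $P_X$ as $X$-semifree.
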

\begin{remark}
    If $A_n$ is a dg algebra and by choosing $X=\{A_n\}$, one obtains the same resolution as in \cite{Derivingdgcat}.
\end{remark}
\begin{proof}
    Apply Lemma \ref{surjection} to obtain a closed morphism $P_0\to M$, and denote by $K_0$ its kernel: since both $\Hm{A_n}(X_i,-)$ and $\Ker d_{\Hm{A_n}(X_i,-)}$ are left exact functors, by construction both the sequences \[
    0\to \Hm{A_n}(X_i,K_0)\to \Hm{A_n}(X_i,P_0) \to \Hm{A_n}(X_i,M) \to 0
    \] and \[
    0\to \Ker d_{\Hm{A_n}(X_i,K_0)}\to \Ker d_{\Hm{A_n}(X_i,P_0)} \to \Ker d_{\Hm{A_n}(X_i,M)} \to 0
    \]
    are exact. Applying again Lemma \ref{surjection} to obtain a morphism $P_1 \to K_0$, we can iterate the procedure to obtain a sequence \[
    \ldots \to P_2 \to P_1 \to P_0 \to M
    \] such that 
    \[
    \ldots \to \Hm{A_n}(X_i,P_2) \to \Hm{A_n}(X_i,P_1)  \to \Hm{A_n}(X_i,P_0) \to \Hm{A_n}(X_i,M)\to 0
    \] and 
   \[
    \ldots \to \Ker d_{\Hm{A_n}(X_i,P_2)} \to \Ker d_{\Hm{A_n}(X_i,P_1)}  \to \Ker d_{\Hm{A_n}(X_i,P_0)} \to \Ker d_{\Hm{A_n}(X_i,M)}\to 0
    \] are exact. Set $P_{X_i}=\Tot^\oplus(\ldots \to P_2 \to P_1 \to  P_0)$; the augmentation $P_0\to M$ induces a closed morphism $P_{X_i}\tow{\pi} M$. Since $X_i$ is compact, the cone of $\pi_*$ is isomorphic to \[
    \Tot^\oplus(\ldots \to \Hm{A_n}(X_i,P_2) \to \Hm{A_n}(X_i,P_1)  \to \Hm{A_n}(X_i,P_0) \to \Hm{A_n}(X_i,M)\to 0)
    \] which by \cite[\href{https://stacks.math.columbia.edu/tag/09IZ}{09IZ}]{stacks-project} is exact so\[
    \pi_*\colon \Hm{A_n}(X_i, P_{X_i}) \to \Hm{A_n}(X_i, M)
    \] is a quasi-isomorphism,

 To conclude that $P_X$ is $X$-semifree, recalling that each $P_i$ has a $2$-step filtration $0 \hookrightarrow S_i \hookrightarrow P_i$ as in Proposition \ref{Xsemifree} we can define a filtration $F_i$ on $P_X$ as \[F_{2i}P_X=\Tot(0 \to P_i \to \ldots \to P_0 \to 0)\] and adding $S_{i+1}$ to obtain $F_{2i+1}P_X$. It is then readily seen that the existence filtration $F_i$ satisfies the conditions in the definition of $X$-semifree module.
     
\end{proof}
\begin{corollary}\label{excell}
    Any $A_n$-module $M$ admits an $n$-semifree, and hence $n$-cell, resolution.
\end{corollary}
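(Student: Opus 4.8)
The plan is to feed the general resolution construction of Proposition \ref{res1} the specific set of generators $X = \{\Gamma_0, \ldots, \Gamma_n\}$, and then to promote the resulting weak equivalence to a genuine $n$-quasi-isomorphism using compact generation. Concretely, Proposition \ref{res1} applied to $M$ with this choice of $X$ produces an $X$-semifree — that is, $n$-semifree — module $P$ together with a closed morphism $f\colon P \to M$ for which $f_*\colon \Hm{A_n}(\Gamma_i, P) \to \Hm{A_n}(\Gamma_i, M)$ is a quasi-isomorphism for each $i = 0, \ldots, n$. By Lemma \ref{Xsemifree}, $P$ is then automatically $n$-cell, so $P$ already has the right shape; what remains is to check that $f$ itself is an $n$-quasi-isomorphism, i.e. that $C := \Cn(f)$ is $n$-acyclic.

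First I would record that $C$ is acyclic ``as seen by the generators''. Applying the dg functor $\Hm{A_n}(\Gamma_i, -)$ to the triangle $P \to M \to C \to P[1]$ of $\hot(A_n)$ yields a triangle of complexes of $R_n$-modules, because the cone of a closed morphism is the direct sum of (a shift of) the source and the target as graded modules and Hom commutes with finite direct sums; since $f_*$ is a quasi-isomorphism, $\Hm{A_n}(\Gamma_i, C)$ is acyclic, equivalently $\Hm{\hot(A_n)}(\Gamma_i, C[l]) = 0$ for all $i$ and all $l \in \mathbb{Z}$.

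Next I would invoke $n$-homotopy projectivity of the $\Gamma_i$, by which $\Hm{\hot(A_n)}(\Gamma_i, C[l]) \cong \Hm{D^n(A_n)}(\Gamma_i, C[l])$, so all these groups vanish in $D^n(A_n)$. Since $\{\Gamma_0, \ldots, \Gamma_n\}$ generates $D^n(A_n)$ by Theorem \ref{propcompgen}, this forces $C = 0$ in $D^n(A_n)$, i.e. $C$ is $n$-acyclic. Hence $f\colon P \to M$ is an $n$-quasi-isomorphism from an $n$-semifree (and, by Lemma \ref{Xsemifree}, $n$-cell) module, which is exactly an $n$-semifree resolution of $M$.

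I do not expect any genuine obstacle here: once Proposition \ref{res1} and Theorem \ref{propcompgen} are available, the argument is formal two-step bookkeeping. The only place that needs a moment's care is the passage from ``$f_*$ is a quasi-isomorphism for all $\Gamma_i$'' to ``$C$ is $n$-acyclic''; one may either run it through compact generation as above, or instead re-derive $n$-acyclicity of $C$ directly by chasing the triangles \eqref{triagen} and the short exact sequences \eqref{ses1}, as in the proof of Theorem \ref{propcompgen}. Either way it is short.
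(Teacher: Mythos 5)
Your proposal is correct and follows exactly the paper's own argument: apply Proposition \ref{res1} with $X=\{\Gamma_0,\ldots,\Gamma_n\}$ and then use that the $\Gamma_i$ are $n$-homotopy projective generators of $D^n(A_n)$ to conclude that $P\to M$ becomes an isomorphism there. The only difference is that you spell out the final step via the cone and the generation criterion, which the paper leaves implicit.
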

\begin{proof}
    Apply Proposition \ref{res1} with $X=\{\Gamma_0, \ldots \Gamma_n \}$ to obtain an $n$-semifree module $P$ with a morphism $P\to M$; by construction  \[
    \Hm{A_n}(\Gamma_i,P)\to \Hm{A_n}(\Gamma_i,M)
    \] is a quasi-isomorphism for all $i$. Since the modules $\Gamma_i$ are generators of $D^n(A_n)$, we get that $P\to M$ is an isomorphism in $D^n(A_n)$.
    \end{proof}
\begin{remark}
    As a consequence of the explicit construction, we also obtain that the resolution can be chosen so that \[
    \Hm{A_n}(\Gamma_i, P) \to \Hm{A_n}(\Gamma_i, M)
    \] is a surjection for all $i$; we will use this fact in Section \ref{parmodel}.
\end{remark}
We can finally apply Proposition \ref{krause} to prove the following:
\begin{proposition}\label{resolutions}
    The quotient functor $\hot(A_n) \to D^n(A_n)$ has a fully faithful left adjoint $\mathbf{p}_n$, whose essential image is given by the $n$-cell modules. Moreover, the classes of $n$-cell modules and $n$-homotopy projective modules coincide. 
\end{proposition}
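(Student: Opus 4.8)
The plan is to read off both assertions from Proposition \ref{krause}, applied to $\T = \hot(A_n)$ and $\cS = \operatorname{Ac}^n$ (which is thick, being a localizing subcategory by Proposition \ref{prodcoprod}). The hypothesis to verify is the second equivalent condition there: for every $A_n$-module $M$ there is a triangle $P \to M \to S \to P[1]$ in $\hot(A_n)$ with $P \in \prescript{\perp}{}{\operatorname{Ac}^n}$ and $S \in \operatorname{Ac}^n$. This is exactly what Corollary \ref{excell} delivers. I would take an $n$-cell resolution $q \colon P \to M$; by definition $q$ is an $n$-quasi-isomorphism, so its cone $S := \Cn(q)$ is $n$-acyclic, i.e. $S \in \operatorname{Ac}^n$, and the defining triangle of the cone is the triangle we need. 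Moreover $P$ is $n$-cell, hence $n$-homotopy projective by the earlier proposition to that effect, i.e. $P \in \prescript{\perp}{}{\operatorname{Ac}^n}$. Proposition \ref{krause} then produces a fully faithful left adjoint $\mathbf{p}_n$ to $\hot(A_n) \to D^n(A_n)$ whose essential image is $\prescript{\perp}{}{\operatorname{Ac}^n}$, i.e. the $n$-homotopy projective modules; concretely $\mathbf{p}_n M$ may be taken to be the module $P$ above, so $\mathbf{p}_n$ always outputs an $n$-cell module.

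It remains to identify the $n$-homotopy projective modules with the $n$-cell ones. One inclusion is already recorded: every $n$-cell module is $n$-homotopy projective. For the reverse, suppose $M$ is $n$-homotopy projective and choose an $n$-cell resolution $q \colon P \to M$ as above. Since $\Cn(q) \in \operatorname{Ac}^n$, the morphism $q$ becomes an isomorphism in the Verdier quotient $D^n(A_n)$. By the last equivalent condition of Proposition \ref{krause}, the composite $\prescript{\perp}{}{\operatorname{Ac}^n} \hookrightarrow \hot(A_n) \to D^n(A_n)$ is an equivalence, in particular fully faithful; as both $P$ and $M$ lie in $\prescript{\perp}{}{\operatorname{Ac}^n}$, this forces $q$ to already be an isomorphism in $\hot(A_n)$. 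Finally, the $n$-cell modules form a triangulated (localizing) subcategory of $\hot(A_n)$, hence one closed under isomorphism, so $M$ is $n$-cell. Thus the essential image of $\mathbf{p}_n$, the class of $n$-homotopy projectives, and the class of $n$-cell modules all coincide.

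I do not expect a genuine obstacle: the substantive construction has already been carried out in the proof of Corollary \ref{excell}, and the argument above is a formal unwinding of Proposition \ref{krause}. The only step deserving a little care is the converse inclusion $n$-homotopy projective $\Rightarrow$ $n$-cell, where one should argue through the equivalence $\prescript{\perp}{}{\operatorname{Ac}^n} \simeq D^n(A_n)$ and closure of a triangulated subcategory under isomorphism, rather than trying to coerce resolutions into a particular shape by hand.
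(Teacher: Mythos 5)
Your argument is correct and follows the same route as the paper: the first claim is Proposition \ref{krause} applied via the $n$-cell resolutions of Corollary \ref{excell}, and the second claim is obtained by taking an $n$-cell resolution of an $n$-homotopy projective module and observing it must be a homotopy equivalence. Your justification of that last step (full faithfulness of the quotient on $\prescript{\perp}{}{\operatorname{Ac}^n}$ reflects isomorphisms, plus closure of the $n$-cell modules under isomorphism) correctly fills in what the paper leaves implicit.
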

\begin{proof}
    The first statement is a direct application of Proposition \ref{krause}. For the second one, for any $n$-homotopy projective $P$ take an $n$-cell resolution $P_n\to P$; since $P$ is $n$-homotopy projective this has to be a homotopy equivalence and $P$ is itself $n$-cell.
\end{proof}
\begin{remark}
    In particular, we obtain that any $n$-cell $A_n$-module is homotopy equivalent to an $n$-semifree module.
\end{remark}
Since $\Gr_t(-)$ and $\Gr_t(-)$ send $n$-cell modules to $A$-cell modules, we also get the following
\begin{corollary}
    If $M$ is an $n$-homotopy projective $A_n$-module, $\Gr_K(M)$ and $\Gr_t(M)$ are homotopy projective $A$-modules.
\end{corollary}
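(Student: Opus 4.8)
The plan is to reduce the statement to the two facts already at our disposal: first, that the class of $n$-homotopy projective $A_n$-modules coincides with the class of $n$-cell modules (Proposition~\ref{resolutions}); and second, that $\Gr_t(-)$ and $\Gr_K(-)$ send $n$-cell $A_n$-modules to $A$-cell $A$-modules (Lemma~\ref{nprojproj}), which are in turn homotopy projective over $A$.

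Concretely, I would argue as follows. Given an $n$-homotopy projective module $M$, Proposition~\ref{resolutions} shows that $M$ is $n$-cell, i.e.\ it lies in the minimal localizing subcategory of $\hot(A_n)$ generated by $\Gamma_0,\ldots,\Gamma_n$. Lemma~\ref{nprojproj} then applies verbatim and yields that $\Gr_t(M)$ and $\Gr_K(M)$ are $A$-cell dg $A$-modules; since every $A$-cell module is homotopy projective over $A$ (the case $n=0$ of the same circle of results, recorded just after Lemma~\ref{nprojproj}), we are done. There is essentially no obstacle here: the entire content has been front-loaded into Lemma~\ref{nprojproj} and Proposition~\ref{resolutions}, and the corollary is their immediate juxtaposition.

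Should one wish to avoid quoting the identification of $n$-cell with $n$-homotopy projective modules, there is a direct variant. Consider the full subcategory $\mathcal{P}\subseteq\hot(A_n)$ of modules $N$ for which both $\Gr_t(N)$ and $\Gr_K(N)$ are homotopy projective over $A$. This is a triangulated subcategory, and it is localizing because $\Gr_t(-)$ and $\Gr_K(-)$ commute with coproducts (Proposition~\ref{prodcoprod} and the subsequent remark) and because coproducts of homotopy projective $A$-modules are homotopy projective; moreover $\mathcal{P}$ contains each generator $\Gamma_i$, since the explicit description in \S\ref{consgen} shows that the $t$-adic and $K$-associated graded of $\Gamma_i$ are finite iterated extensions of shifts of $A$, hence $A$-cell. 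Therefore $\mathcal{P}$ contains the whole minimal localizing subcategory generated by the $\Gamma_i$, in particular $M$. The only mildly fiddly point on this route is the bookkeeping of the graded pieces of the $\Gamma_i$, which is routine given their matrix description.
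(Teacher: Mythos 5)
Your first argument is exactly the paper's proof: the corollary is stated as an immediate consequence of Proposition \ref{resolutions} (identifying $n$-homotopy projective with $n$-cell modules) and Lemma \ref{nprojproj} ($\Gr_t$ and $\Gr_K$ send $n$-cell modules to $A$-cell, hence homotopy projective, $A$-modules). The alternative variant you sketch is fine too, but it merely re-proves Lemma \ref{nprojproj}, so there is nothing further to add.
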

The converse does not hold; for a simple example, consider the case $A=k$, $A_n=R_n$; then every $A$-module is homotopy projective, but since $D^n(R_n)\neq \hot(R_n)$ - for example, the first category is compactly generated while the second is not - not every $R_n$-module is $n$-homotopy projective.
\begin{corollary}
    The dg subcategory $\mathcal{SF}^n(A_n)\subseteq A_n\Mod$ given by the $n$-semifree $A_n$-modules is a dg enhancement of $D^n(A_n)$.
\end{corollary}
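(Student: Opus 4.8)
The plan is to check the two defining properties of a dg enhancement: that $\mathcal{SF}^n(A_n)$ is pretriangulated, and that $H^0\mathcal{SF}^n(A_n)$ is triangle-equivalent to $D^n(A_n)$. Both will follow quickly from what has already been proved, the one substantive point being the identification of the essential image of $H^0\mathcal{SF}^n(A_n)$ inside $\hot(A_n)$. First I would record that $\mathcal{SF}^n(A_n)$ is closed under shifts, which is immediate: shifting the filtration of an $n$-semifree module yields an $n$-semifree module, since shifts of the $\Gamma_i$ are permitted among the subquotients by definition.

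For cones, rather than building an explicit filtration on $\Cn(f)$ for a closed degree-$0$ morphism $f\colon P\to Q$ of $n$-semifree modules (awkward, since $f$ need not respect the given filtrations), I would argue abstractly. Let $\mathcal{P}^n=\prescript{\perp}{}{\operatorname{Ac}^n}\subseteq\hot(A_n)$ be the class of $n$-homotopy projective modules. As a left-orthogonal it is a localizing, hence triangulated, subcategory of $\hot(A_n)$, and by Proposition \ref{resolutions} it coincides with the class of $n$-cell modules. Every $n$-semifree module is $n$-cell by Lemma \ref{Xsemifree}, hence lies in $\mathcal{P}^n$; conversely, by the remark following Proposition \ref{resolutions}, every object of $\mathcal{P}^n$ is homotopy equivalent to an $n$-semifree module. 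Therefore the fully faithful functor $H^0\mathcal{SF}^n(A_n)\hookrightarrow\hot(A_n)$ has essential image exactly $\mathcal{P}^n$, i.e. it induces an equivalence $H^0\mathcal{SF}^n(A_n)\xrightarrow{\ \sim\ }\mathcal{P}^n$. Since $A_n\Mod$ is pretriangulated and $\mathcal{P}^n$ is closed under shifts and cones in $\hot(A_n)$, it follows that $\mathcal{SF}^n(A_n)$ is pretriangulated: the cone in $A_n\Mod$ of a morphism of $n$-semifree modules maps into $\mathcal{P}^n$ and is thus homotopy equivalent to an $n$-semifree module, and likewise for shifts.

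Finally I would compose with the localization. By Proposition \ref{krause} (third bullet), applicable because the left adjoint $\mathbf{p}_n$ exists by Proposition \ref{resolutions}, the composite $\mathcal{P}^n\hookrightarrow\hot(A_n)\to D^n(A_n)$ is a triangle equivalence. Chaining the two equivalences gives
\[
H^0\mathcal{SF}^n(A_n)\xrightarrow{\ \sim\ }\mathcal{P}^n\xrightarrow{\ \sim\ }D^n(A_n),
\]
which is exactly the functor induced by $A_n\Mod\to\hot(A_n)\to D^n(A_n)$ on $n$-semifree modules and is a functor of triangulated categories; essential surjectivity can alternatively be read directly off Corollary \ref{excell}. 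This exhibits $\mathcal{SF}^n(A_n)$ as a dg enhancement of $D^n(A_n)$.

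I do not expect a genuine obstacle; the only step needing care is pretriangulatedness, where the orthogonality/localizing argument above is cleaner than a direct construction. If one insisted on being concrete, one could instead interleave the filtrations of $P$ and $Q$ to produce a filtration on $\Cn(f)$ with subquotients coproducts of shifts of the $\Gamma_i$, but that bookkeeping is precisely what the abstract argument circumvents.
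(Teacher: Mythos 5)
Your proof is correct and follows essentially the same route as the paper, which composes the inclusion $H^0\mathcal{SF}^n(A_n)\hookrightarrow\hot(A_n)$ with the quotient to $D^n(A_n)$ and inverts it using $\mathbf{p}_n$ (equivalently, the identification of the essential image with the $n$-homotopy projective modules via Proposition \ref{resolutions} and Proposition \ref{krause}). Your explicit verification that $\mathcal{SF}^n(A_n)$ is pretriangulated is a welcome extra detail that the paper leaves implicit, but it is not a different argument.
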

\begin{proof}
    There is a natural functor $H^0\mathcal{SF}^n(A_n) \to D^n(A_n)$ given by the composition \[
    H^0\mathcal{SF}^n(A_n) \hookrightarrow \hot(A_n) \to D^n(A_n);
    \] the right adjoint $\mathbf{p}_n$ gives an inverse.
\end{proof}

\subsubsection*{Filtered behaviors}The forgetful functor $\hot(A_{n-1})\to \hot(A_n)$ carries $(n-1)$-acyclic modules to $n$-acyclic modules and defines a functor \[
\iota_{n}\colon  D^{n-1}(A_{n-1})\to D^n(A_n).
\]
\begin{corollary}\label{embeddings}
    The functor $\iota_{n}$
    is fully faithful.
\end{corollary}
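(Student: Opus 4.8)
The plan is to reduce the statement to the (dg) full faithfulness of the forgetful functor $F\colon A_{n-1}\Mod\to A_n\Mod$ by computing both Hom-groups through an $(n-1)$-cell resolution. Two stability properties of $F$ are needed. First, if $P$ is an $(n-1)$-cell $A_{n-1}$-module, then $FP$ is an $n$-cell, hence $n$-homotopy projective, $A_n$-module: indeed $F$ preserves coproducts and triangles and sends each $\Gamma_i$ with $i\leq n-1$ to the module of the same name over $A_n$, so it maps the minimal localizing subcategory of $\hot(A_{n-1})$ generated by $\Gamma_0,\dots,\Gamma_{n-1}$ into the minimal localizing subcategory of $\hot(A_n)$ generated by $\Gamma_0,\dots,\Gamma_n$. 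Second, $F$ takes $(n-1)$-quasi-isomorphisms to $n$-quasi-isomorphisms: for any $A_{n-1}$-module $M$ one has $t^nM=0$, so $\Gr_t^n(FM)=0$, while $\Gr_t^i(FM)$ coincides, as a complex, with the $i$-th graded piece of the $t$-adic filtration of $M$ for $i\leq n-1$; since $F$ commutes with cones, it carries $(n-1)$-acyclic modules to $n$-acyclic ones and hence $(n-1)$-quasi-isomorphisms to $n$-quasi-isomorphisms.

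Now fix $A_{n-1}$-modules $M$ and $N$ and choose an $(n-1)$-cell resolution $P\to M$ (Corollary~\ref{excell}). By the two properties above, $FP$ is $n$-homotopy projective and $FP\to FM$ is an $n$-quasi-isomorphism, hence an isomorphism in $D^n(A_n)$; combining this with Proposition~\ref{krause} applied to $\operatorname{Ac}^n\subseteq\hot(A_n)$ and to $\operatorname{Ac}^{n-1}\subseteq\hot(A_{n-1})$, and with the full faithfulness of $F$ as a dg functor, I obtain
\[
\Hm{D^n(A_n)}(FM,FN)\;\cong\;H^0\Hm{A_n}(FP,FN)\;\cong\;H^0\Hm{A_{n-1}}(P,N)\;\cong\;\Hm{D^{n-1}(A_{n-1})}(M,N).
\]
Unwinding the definition of $\iota_n$ on morphisms — a morphism out of $M$ in $D^{n-1}(A_{n-1})$ is represented by a genuine chain map $P\to N$ up to homotopy, and $F$ of it represents the corresponding morphism in $D^n(A_n)$ precisely because of the two properties above — one checks that this composite of natural isomorphisms is the map induced by $\iota_n$. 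Replacing $N$ by its shifts is harmless, so $\iota_n$ is fully faithful.

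I do not expect a genuine obstacle: the statement is essentially formal given the resolution machinery of \S\ref{parres}. The two points deserving care are the stability properties of $F$ recorded above and the verification that the displayed chain of isomorphisms really computes $\iota_n$ and not some rescaling of it; both are routine. Alternatively one can argue by dévissage over compact generators: $\iota_n$ preserves coproducts and sends the compact generators $\Gamma_0,\dots,\Gamma_{n-1}$ of $D^{n-1}(A_{n-1})$ to compact objects of $D^n(A_n)$, and the computation above with $M=\Gamma_i$, $N=\Gamma_j$ shows $\iota_n$ is an isomorphism on the morphism groups between these generators and all their shifts; the usual argument (fix one variable, note that the class of objects for which the comparison map is an isomorphism is a triangulated subcategory closed under coproducts, then use compact generation) then yields full faithfulness.
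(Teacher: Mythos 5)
Your proposal is correct and follows essentially the same route as the paper, which proves the corollary in one line by noting that the forgetful functor is (dg) fully faithful and carries $(n-1)$-cell modules to $n$-cell modules; your write-up simply fleshes out the details of that argument (and your dévissage alternative matches the paper's treatment of the case $n=1$ in Corollary \ref{nonzero}). The paper also sketches a second alternative — any morphism from an $A_{n-1}$-module to an $n$-acyclic module $M$ factors through the $(n-1)$-acyclic module $\Ker t^n_M$, whence full faithfulness by \cite[Lemma 4.7.1]{krause2009localization} — but this is not needed.
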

\begin{proof}
    This follows from the fact that the forgetful functor is fully faithful and carries $(n-1)$-cell modules to $n$-cell modules; alternatively, one could note that any morphism from an $A_{n-1}$-module to an $n$-acyclic $A_n$-module $M$ factors through the $(n-1)$-acyclic $A_{n-1}$-module $\Ker t^n_M$ and invoke \cite[Lemma 4.7.1]{krause2009localization}.
\end{proof}

\begin{corollary}\label{corembeddings}
    For any deformation $A_n$, there is a system of embeddings
    \[
D(A)\overset{\iota_{1}}{\hookrightarrow}\ldots D^{n-1}(A_{n-1})\overset{\iota_{n}}{\hookrightarrow} D^n(A_n).
\]
\end{corollary}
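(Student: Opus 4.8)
The plan is to obtain the asserted chain by iterating Corollary \ref{embeddings}, so the content here is bookkeeping rather than new mathematics. First I would record that for every $0 \le m \le n$ the induced algebra $A_m = A_n \otimes_{R_n} R_m$ is itself a cdg deformation of $A$ of order $m$: it is $R_m$-free and its reduction modulo $t$ is $A_n \otimes_{R_n} k = A$. Applying Corollary \ref{embeddings} with $A_m$ in the role of $A_n$ therefore produces, for each $m = 1, \ldots, n$, a fully faithful functor
\[
\iota_m \colon D^{m-1}(A_{m-1}) \hookrightarrow D^m(A_m),
\]
induced by the forgetful functor along the surjection $A_m \twoheadrightarrow A_{m-1}$.

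The only compatibility I need to check before composing is that the order-$(m-1)$ deformation $A_m \otimes_{R_m} R_{m-1}$ appearing here coincides with the order-$(m-1)$ deformation of $A$ induced directly by $A_n$. This is immediate from the canonical isomorphism of cdg algebras $A_m \otimes_{R_m} R_{m-1} \cong A_n \otimes_{R_n} R_{m-1}$, which follows from $R_m \otimes_{R_m} R_{m-1} \cong R_{m-1}$ together with the fact that the $R_n$-action on $R_{m-1}$ factors through $R_n \twoheadrightarrow R_m$; with the algebras so identified, the forgetful functors underlying the $\iota_m$ compose literally.

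Finally I would use that $A_0 = A$, hence $D^0(A_0) = D(A)$ by the remark following Definition \ref{deffiltered}, and that a composite of fully faithful functors is fully faithful. Composing $\iota_n \circ \cdots \circ \iota_1$ then yields the system
\[
D(A) \overset{\iota_1}{\hookrightarrow} D^1(A_1) \overset{\iota_2}{\hookrightarrow} \cdots \overset{\iota_n}{\hookrightarrow} D^n(A_n),
\]
and in particular a fully faithful embedding $D(A) \hookrightarrow D^n(A_n)$ refining Corollary \ref{nonzero}. I do not expect any genuine obstacle: everything of substance is already contained in Corollary \ref{embeddings}, and this statement merely assembles the pieces.
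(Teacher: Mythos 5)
Your proposal is correct and matches the paper's (implicit) argument: the corollary is obtained exactly by iterating Corollary \ref{embeddings} for each induced deformation $A_m$ and using $D^0(A_0)=D(A)$. The compatibility check $A_m\otimes_{R_m}R_{m-1}\cong A_n\otimes_{R_n}R_{m-1}$ is sound and is the only bookkeeping point worth recording.
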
 
    This behavior is starkly different compared to the classical (dg) case, where homotopy projective modules are very much not preserved and the forgetful functor $D(A_{n-1})\to D(A_n)$ is not fully faithful; having ``all quotients at the same time'' is a fundamental characteristic of this filtered setting.
    \subsection{Injective resolutions}
    One can dualize the argument to also obtain the existence of $n$-homotopy injective resolutions.
   \begin{proposition}\label{injres}
    The quotient functor $\hot(A_n) \to D^n(A_n)$ admits a fully faithful right adjoint $\mathbf{i}_n$, whose essential image is given by the $n$-homotopy injective modules.
\end{proposition}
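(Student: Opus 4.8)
The plan is to dualize the proof of Proposition~\ref{resolutions}. By the right-adjoint version of Proposition~\ref{krause} applied to the thick subcategory $\operatorname{Ac}^n \subseteq \hot(A_n)$ (whose right orthogonal $\operatorname{Ac}^{n\perp}$ is, by definition, the class of $n$-homotopy injective modules), it suffices to show that every $A_n$-module $M$ admits an \emph{$n$-homotopy injective resolution}, that is, an $n$-quasi-isomorphism $M \to I$ with $I \in \operatorname{Ac}^{n\perp}$, equivalently a triangle $S \to M \to I \to S[1]$ in $\hot(A_n)$ with $S$ being $n$-acyclic and $I$ being $n$-homotopy injective. Granting such resolutions, Proposition~\ref{krause} at once produces the fully faithful right adjoint $\mathbf{i}_n$ and identifies its essential image with $\operatorname{Ac}^{n\perp}$, i.e.\ with the $n$-homotopy injective modules.

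A convenient preliminary, dual to Corollary~\ref{embeddings}, is that for $m \le n$ the forgetful functor $A_m\Mod \to A_n\Mod$ carries $m$-homotopy injective modules to $n$-homotopy injective ones. This follows from the fact that this functor has the left adjoint $\Coker t^{m+1}$, which descends to homotopy categories and carries an $n$-acyclic module $N$ to the $m$-acyclic $A_m$-module $N/t^{m+1}N$ (since the $t$-adic graded pieces of the latter form a subcollection of those of $N$), combined with the existence of ordinary homotopy injective $A$-modules; in particular, ordinary homotopy injective $A$-modules, viewed as $A_n$-modules via restriction, are $n$-homotopy injective. One then resolves a general $M$ along the \emph{finite} $t$-adic filtration $0 = t^{n+1}M \subseteq \cdots \subseteq tM \subseteq M$: inductively, one resolves the $A_{n-1}$-module $\Ker t^n_M$ using the inductive hypothesis and the $A$-module $t^nM$ by an ordinary homotopy injective resolution over $A$, and assembles the results into an $n$-homotopy injective $I$ together with an $n$-quasi-isomorphism $M \to I$, the latter being verified on $\Gr_t$. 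The detailed construction is carried out in Appendix~\ref{secinj}.

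The main obstacle is the assembly step, which is of a genuinely different character than in the projective case. There, $n$-cell resolutions are built as a transparent $\Tot^\oplus$ of coproducts of the \emph{compact} generators $\Gamma_0, \dots, \Gamma_n$, and it is precisely compactness that makes $\Hm{A_n}(\Gamma_i, -)$ commute with the colimit defining the resolution. Dualizing would require a cogenerating set of cocompact objects, which is not available, so one is forced into a limit (or transfinite) construction instead; moreover the filtered short exact sequences used in the assembly, such as $0 \to \Ker t^n_M \to M \to t^nM \to 0$, need not split even as sequences of graded $A_n^\#$-modules, so one cannot simply splice together the layerwise resolutions inside $\hot(A_n)$. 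Instead one first embeds $M$ into honest injective objects of the Grothendieck abelian category of cdg $A_n$-modules and only afterwards corrects for homotopy injectivity. The final verification that the assembled $I$ lies in $\operatorname{Ac}^{n\perp}$ rests, exactly as in the preliminary above, on the fact (Corollary~\ref{nacyacy}) that the functors $\Ker t^i$, and likewise reduction modulo powers of $t$, send $n$-acyclic modules to acyclic ones after passing to the relevant lower-order deformation.
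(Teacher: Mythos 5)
Your reduction of the statement to the existence of $n$-homotopy injective resolutions via the right-adjoint version of Proposition \ref{krause} is correct and is exactly how the paper frames the problem, and your preliminary observation (that the forgetful functor carries $m$-homotopy injectives to $n$-homotopy injectives because $\Coker t^{m+1}$ sends $n$-acyclics to $m$-acyclics) is sound. However, the heart of the matter --- actually producing an $n$-quasi-isomorphism $M \to I$ with $I$ $n$-homotopy injective --- is not proved. You correctly identify the assembly step as the main obstacle (the filtration sequences such as $0 \to \Ker t^n_M \to M \to t^nM \to 0$ are not graded split, so they are not triangles in $\hot(A_n)$ and the layerwise resolutions cannot be spliced; and there is no cogenerating set of cocompact objects to dualize the $\Tot^\oplus$ argument), but you then only gesture at a fix (``embed into injectives of the abelian category and correct afterwards'') without carrying it out. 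In the curved filtered setting this is precisely where the work lies: one must control the cohomology of a $\Tot^\Pi$ of a coresolution, and without a replacement for compactness nothing guarantees that the totalization is still an $n$-quasi-isomorphism.

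The paper's actual construction (Appendix \ref{secinj}) takes a different route that you do not anticipate: it uses linear duality $(-)^\vee = \Hm{\mathbb{Z}}(-,\mathbb{Q}/\mathbb{Z})$. One takes the right $A_n$-module $M^\vee$, builds an $n$-cell resolution $P \to M^\vee$ of it by the already-established projective machinery, dualizes to get $M^{\vee\vee} \to P^\vee$ with $P^\vee$ $n$-cocell (a product-closed analogue of $n$-cell built from the duals $\Gamma_i^* = D_i^\vee$, which are shown to be $n$-homotopy injective by adjunction), precomposes with $\ev_M\colon M \hookrightarrow M^{\vee\vee}$, and iterates, setting $I = \Tot^\Pi(I_\bullet)$. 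The convergence problem you left open is solved there by introducing the right-exact, product-preserving functors $Q_i(-) = D_i \otimes_{A_n} -$, which satisfy $Q_i(M)^\vee \cong F_i(M^\vee)$ and jointly detect $n$-acyclicity; exactness of the sequences $0 \to Q_i(M) \to Q_i(I_0) \to Q_i(I_1) \to \cdots$ is what makes $Q_i(\Cn(M\to I))$ an acyclic $\Tot^\Pi$. Your proposal is therefore incomplete as a proof: the strategy you sketch for the resolutions would need either this duality mechanism or an equivalent substitute (e.g.\ the $0$-cocompactness of the $\Gamma_i^*$ mentioned at the end of the appendix) to close the gap.
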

The construction is somewhat more involved than the projective case and is contained in Appendix \ref{secinj}.

\section{Semiorthogonal decompositions}\label{parsemiort}
In this section we construct a semiorthogonal decomposition of the $n$-derived category (Theorem \ref{filtrationscat}). Ideally we would like each of the factors to be generated by one of the objects $\Gamma_i$ defined in section \ref{consgen}, but it turns out that these have nontrivial morphisms in both directions and we need to slightly tweak the generators. We define \[
G_n=\operatorname{coCone}(\Gamma_n \to \frac{\Gamma_n}{t^n\Gamma_n}).
\]
\begin{lemma}\label{corepresent}
    For any $A_n$-module $M$, there is a quasi-isomorphism \begin{equation}\label{represent}
    \Hm{A_n}(G_n, M)\cong t^nM.
        \end{equation}In particular, $G_n$ is $n$-homotopy projective.
\end{lemma}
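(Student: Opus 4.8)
The plan is to use the defining cocone triangle for $G_n$ and turn the computation into a cokernel. By construction there is a triangle
\[ G_n \longrightarrow \Gamma_n \xrightarrow{\,q\,} \frac{\Gamma_n}{t^n\Gamma_n} \longrightarrow G_n[1] \]
in $\hot(A_n)$, with $q$ the canonical projection. Applying $\Hm{A_n}(-,M)$ sends this to a triangle $\Hm{A_n}(\Gamma_n/t^n\Gamma_n,M)\xrightarrow{\,q^*\,}\Hm{A_n}(\Gamma_n,M)\to\Hm{A_n}(G_n,M)\to[1]$, so that $\Hm{A_n}(G_n,M)\cong\Cn(q^*)$. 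Since $q$ is surjective, $q^*$ is an \emph{injective} morphism of complexes; and for any injective chain map the natural projection $\Cn(q^*)\to\operatorname{coker}(q^*)$ is a quasi-isomorphism. Hence it suffices to identify $\operatorname{coker}(q^*)$ with $t^nM$.

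To do that I would first record, at the level of underlying graded $A_n^\#$-modules, that $\Gamma_n^\#=A_n^\#\oplus A_{n-1}^\#[1]$ and $(\Gamma_n/t^n\Gamma_n)^\#=A_{n-1}^\#\oplus A_{n-1}^\#[1]$, with $q$ equal to the quotient $A_n^\#\twoheadrightarrow A_{n-1}^\#$ on the first summand and the identity on the second. Using $\Hm{A_n^\#}(A_n^\#,M^\#)=M^\#$ and $\Hm{A_n^\#}(A_{n-1}^\#,M^\#)=(\Ker t^n_M)^\#$, one then sees that $q^*$ acts, on underlying graded modules, as the inclusion $\Ker t^n_M\hookrightarrow M$ on one summand and as the identity on the other, so $\operatorname{coker}(q^*)$ has underlying graded module $(M/\Ker t^n_M)^\#$. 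It remains to check that the differential it inherits from $\Hm{A_n}(\Gamma_n,M)$ is the one induced by $d_M$: the twisting terms in that differential coming from the matrix $\gamma_n$ (those built from $\tfrac ct$ and from the action of $t$) all take values in the summand that becomes zero in the cokernel, so only $d_M$ survives. Finally, multiplication by $t^n$ is an isomorphism of complexes $M/\Ker t^n_M\xrightarrow{\ \sim\ }t^nM$ — it is $R_n$-linear, hence commutes with the predifferential — and stringing together these identifications gives the quasi-isomorphism $\Hm{A_n}(G_n,M)\cong t^nM$. For the final assertion: if $M$ is $n$-acyclic then $t^nM=\Gr^n_t(M)$ is acyclic by definition of $n$-acyclicity, so $\Hm{\hot(A_n)}(G_n,M)=H^0\Hm{A_n}(G_n,M)=0$ and $G_n$ lies in $\prescript{\perp}{}{\operatorname{Ac}^n}$, i.e.\ is $n$-homotopy projective.

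The step I expect to be the main obstacle is the identification of $\operatorname{coker}(q^*)$: writing out the two Hom-complexes and the map between them explicitly, and—most delicately—verifying that on passing to the cokernel the twisted part of the differential on $\Hm{A_n}(\Gamma_n,M)$ is completely absorbed, so that what remains is honestly $M/\Ker t^n_M$ with its ordinary differential rather than some nontrivial twist of it. The surrounding arguments—reducing the cone of an injection to its cokernel, and the isomorphism $M/\Ker t^n_M\cong t^nM$—are formal.
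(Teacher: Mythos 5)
Your proof is correct and follows essentially the same route as the paper's: applying $\Hm{A_n}(-,M)$ to the defining cocone triangle, replacing the cone of the resulting injective chain map by its cokernel, identifying that cokernel with $M/\Ker t^n_M\cong t^nM$, and observing that the twisting terms do not survive in the quotient. The paper's argument is the same modulo the level of detail in writing out the underlying graded modules.
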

\begin{proof}
    One sees that, as graded modules, \[\Hm{A_n}(\Gamma_n, M)\cong \Ker t^n_M[-1] \oplus M\text{ and }\Hm{A_n}\left(\frac{\Gamma_n}{t^n\Gamma_n}, M\right)\cong \Ker t^n_{M}[-1]\oplus \Ker t^n_M\] with predifferentials induced by the usual twisting matrix (\ref{twistinghom}). The functor $\Hm{A_n}(-,M)$ sends cocones to cones, so there is an isomorphism \[
    \Hm{A_n}(G_n, M)\cong \operatorname{Cone}(\Hm{A_n}\left(\frac{\Gamma_n}{t^n\Gamma_n},M\right)\xhookrightarrow{J}\Hm{A_n}(\Gamma_n,M))
    \] where $J$ is given by the identity in the first component and the inclusion $\Ker t^n_M \hookrightarrow M$ in the second. Since $J$ is injective, there is a natural quasi-isomorphism
$\operatorname{Cone }J\cong \Coker J$; looking at the explicit form of $\Coker J$, one sees that it is isomorphic to $M/\Ker t^n_M\cong t^nM$ not only as a graded module but also as complex, since no component of the matrix \eqref{twistinghom} twists the component of the predifferential going from the second factor to itself.
\end{proof}
Recall from Proposition \ref{embeddings} that we had the embedding \[\iota_{n}\colon D^{n-1}(A_{n-1})\hookrightarrow D^n(A_n)\] induced by the forgetful functor: denote by $\iota_{n} D^{n-1}(A_{n-1})\subseteq D^n(A_n)$ its essential image. 
Its left adjoint $M\to \frac{M}{t^nM}$ defines a functor \[\begin{split}
\Coker t^n \colon D^n(A_n) &\to D^{n-1}(A_{n-1})\\
\end{split}\] which, as a consequence of the fact that the forgetful functor carries $(n-1)$-acyclics to $n$-acyclics, is still a left adjoint to $\iota_{n}$.

\begin{lemma}\label{ncategory}
    The subcategory $\iota_{n}D^{n-1}(A_{n-1})\subseteq D^n(A_n)$ is given by the modules $M$ for which $t^nM$ is acyclic.
\end{lemma}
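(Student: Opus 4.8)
The plan is to prove that for an arbitrary cdg $A_n$-module $M$ the natural inclusion $j_M \colon \Ker t^n_M \hookrightarrow M$ becomes an isomorphism in $D^n(A_n)$ if and only if $t^n M$ is acyclic. This would settle the lemma at once. Since $\Ker t^n_M$ is annihilated by $t^n$ it is an $A_{n-1}$-module, with $\iota_n(\Ker t^n_M)$ equal to $\Ker t^n_M$ as an $A_n$-module, so if $t^n M$ is acyclic then $M \cong \Ker t^n_M$ lies in $\iota_n D^{n-1}(A_{n-1})$. Conversely, any $M$ in $\iota_n D^{n-1}(A_{n-1})$ is isomorphic in $D^n(A_n)$ to the restriction of scalars of an $A_{n-1}$-module, which is killed by $t^n$; applying the functor $\Img t^n$ — which descends to $D^n(A_n)\to D(A)$ because it is triangulated ($t$ acting diagonally on cones) and, being equal to $\Gr_t^n(-)$, carries $n$-acyclic modules to acyclic ones — then forces $t^n M$ to be acyclic. (Equivalently, one can organise both implications around the counit $\iota_n\Ker t^n_M \to M$ of the adjunction $\iota_n \dashv \Ker t^n$, which descends to the $n$-derived categories by Corollary \ref{nacyacy} and is represented by $j_M$: the essential image of a fully faithful left adjoint is exactly the class of objects on which the counit is invertible.)

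The technical heart is therefore the claim that $j_M$ is an $n$-quasi-isomorphism precisely when $t^n M$ is acyclic, and I would prove it by inspecting the associated graded of the $K$-filtration, using the criterion recalled after Proposition \ref{kacyatcy} that $n$-quasi-isomorphisms are detected on $\Gr_K$. The $K$-filtration of $\Ker t^n_M$ is the truncation $0 \subseteq \Ker t_M \subseteq \cdots \subseteq \Ker t^n_M$ of that of $M$, so $j_M$ is a strict map of filtered complexes: $\Gr_K^i(j_M)$ is the identity of $\Ker t^{i+1}_M/\Ker t^i_M$ for $0 \le i \le n-1$, while $\Gr_K^n(j_M)$ is the inclusion $0 \to \Ker t^{n+1}_M/\Ker t^n_M = M/\Ker t^n_M$. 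Because $t$ acts diagonally on the mapping cone, $\Gr_K^i(\Cn j_M) \cong \Cn(\Gr_K^i j_M)$ for every $i$; the pieces with $i<n$ are cones of isomorphisms, hence contractible, and the piece $i=n$ is a shift of $M/\Ker t^n_M$. Finally the map $m \mapsto t^n m$ identifies $M/\Ker t^n_M$ with $t^n M$ as a complex of $A$-modules, so $\Gr_K(\Cn j_M)$ is acyclic if and only if $t^n M$ is, that is, $\Cn j_M$ is $n$-acyclic if and only if $t^n M$ is acyclic.

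The main obstacle — and the reason for passing to $\Gr_K$ rather than arguing directly inside $\hot(A_n)$ — is that the short exact sequence $0 \to \Ker t^n_M \to M \to t^n M \to 0$ need not split as a sequence of graded $A_n^\#$-modules, so it does not by itself define a triangle in $\hot(A_n)$ and one cannot simply read off that $\Cn j_M$ is homotopy equivalent to $t^n M$. Routing the argument through the $K$-filtration circumvents this at the cost of the bookkeeping above. A secondary point to check is that all the functors in play ($\iota_n$, $\Ker t^n$, $\Img t^n$, $\Coker t^n$) are triangulated and send the pertinent classes of acyclic modules to acyclic modules, so that they descend to the quotient categories and the adjunctions survive; this is routine given Corollary \ref{nacyacy} and the computation of graded pieces in the proof of Proposition \ref{prodcoprod}.
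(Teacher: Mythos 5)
Your proposal is correct and follows essentially the same strategy as the paper, in mirror image: where the paper exhibits the $n$-quasi-isomorphism $M \to M/t^nM$ and checks it on the $t$-adic filtration (the graded pieces agree except for the extra piece $t^nM$), you exhibit the $n$-quasi-isomorphism $\Ker t^n_M \hookrightarrow M$ and check it on the $K$-filtration (the graded pieces agree except for the top piece $M/\Ker t^n_M \cong t^nM$). The two are interchangeable by Proposition \ref{kacyatcy}, and your forward implication — that $\Img t^n = \Gr^n_t$ descends to the $n$-derived categories and kills anything in the image of $\iota_n$ — is exactly the paper's.
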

\begin{proof}
    For any module in the image of the forgetful functor one has $t^nM=0$, and since isomorphisms in $D^n(A_n)$ induce quasi-isomorphisms on $\Img t^n$ it's clear that all the modules $M$ in $\iota_{n} D^{n-1}(A_{n-1})$ have the desired property. If instead $M$ is such that $t^nM$ is acyclic, the map $M\to \frac{M}{t^nM}$ is an isomorphism in the $n$-derived category between $M$ and an element in the image of the forgetful functor - $M$ and $\frac{M}{t^nM}$ have the same graded pieces with respect to the $t$-filtration, with the exception of $M$ having the extra piece $t^nM$.
\end{proof}
\begin{definition}
    Define the triangulated subcategory $\T_n\subseteq D^n(A_n)$ as given by the modules $M\in D^n(A_n)$ for which $\frac{M}{t^nM}$ is $(n-1)$-acyclic.
\end{definition}
It follows directly from the definition that $\T_n$ is a localizing subcategory.
\begin{proposition}\label{decomposition}
    There is a semiorthogonal decomposition \[
D^n(A_n)=\langle \iota_{n}D^{n-1}(A_{n-1}), \T_n \rangle
    \] and $\T_n$ coincides with the minimal localizing subcategory of $D^n(A_n)$ containing the module $G_n$.
\end{proposition}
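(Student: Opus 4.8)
\emph{Overview.} The plan is to derive the semiorthogonal decomposition from the left admissibility of $\iota_n D^{n-1}(A_{n-1})$, identify $\T_n$ with the resulting left orthogonal, and then recognize $G_n$ as a compact generator of $\T_n$ by means of the representability in Lemma \ref{corepresent}.

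\emph{The decomposition.} Recall from the discussion preceding the statement that $\iota_n$ is fully faithful and admits the left adjoint $\Coker t^n\colon D^n(A_n)\to D^{n-1}(A_{n-1})$, $M\mapsto M/t^nM$. Hence $\iota_n D^{n-1}(A_{n-1})$ is a reflective (that is, left admissible) triangulated subcategory of $D^n(A_n)$, giving a semiorthogonal decomposition $D^n(A_n)=\langle\, \iota_n D^{n-1}(A_{n-1}),\ {}^{\perp}(\iota_n D^{n-1}(A_{n-1}))\,\rangle$. It remains to identify the left orthogonal with $\T_n$: for an $A_n$-module $M$ and $N\in D^{n-1}(A_{n-1})$ the adjunction gives a natural isomorphism $\Hom_{D^n(A_n)}(M,\iota_n N)\cong\Hom_{D^{n-1}(A_{n-1})}(M/t^nM,N)$, and this vanishes for all $N$ precisely when $M/t^nM\cong 0$ in $D^{n-1}(A_{n-1})$ (apply it to $N=M/t^nM$), i.e. precisely when $M/t^nM$ is $(n-1)$-acyclic, which is the defining condition for membership in $\T_n$. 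Thus ${}^{\perp}(\iota_n D^{n-1}(A_{n-1}))=\T_n$ and the decomposition is as claimed.

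\emph{$\T_n$ as a localizing subcategory generated by $G_n$.} By Lemma \ref{corepresent}, $G_n$ is $n$-homotopy projective and $\Hom_{A_n}(G_n,M)$ is quasi-isomorphic to $t^nM$ for every $A_n$-module $M$, so $\Hom_{D^n(A_n)}(G_n,M[l])\cong H^l(t^nM)$ for all $l\in\mathbb Z$. When $M$ lies in $\iota_n D^{n-1}(A_{n-1})$ the complex $t^nM$ is acyclic by Lemma \ref{ncategory}, so $G_n\in{}^{\perp}(\iota_n D^{n-1}(A_{n-1}))=\T_n$; as $\T_n$ is localizing, the minimal localizing subcategory of $D^n(A_n)$ generated by $G_n$ is contained in $\T_n$. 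For the reverse inclusion it is enough to prove that $G_n$ is a compact generator of the triangulated category $\T_n$, which has small coproducts, computed in $D^n(A_n)$ since $\T_n$ is localizing: then $\T_n$ equals the minimal localizing subcategory of $\T_n$ generated by $G_n$, which is the minimal localizing subcategory of $D^n(A_n)$ generated by $G_n$, by \cite[Lemma 2.2.1]{schwedestablemodel}. Compactness: coproducts in $D^n(A_n)$ are represented by direct sums of modules, since the quotient $\hot(A_n)\to D^n(A_n)$ preserves coproducts (Proposition \ref{prodcoprod}), and both $t^n(-)$ and $H^0(-)$ commute with direct sums of modules, so $\Hom_{D^n(A_n)}(G_n,-)\cong H^0(t^n(-))$ commutes with coproducts; being compact in $D^n(A_n)$ and lying in $\T_n$, $G_n$ is compact in $\T_n$. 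Generation: if $M\in\T_n$ and $\Hom_{D^n(A_n)}(G_n[l],M)\cong H^{-l}(t^nM)$ vanishes for every $l$, then $t^nM$ is acyclic; together with the $(n-1)$-acyclicity of $M/t^nM$ (which holds because $M\in\T_n$) this makes every graded piece $t^iM/t^{i+1}M$ of the $t$-adic filtration, $i=0,\dots,n$, acyclic, so $M$ is $n$-acyclic, i.e. $M=0$ in $D^n(A_n)$. Hence $G_n$ generates $\T_n$ and we are done.

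\emph{Main obstacle.} None of the steps is hard, but the one demanding care is the identification ${}^{\perp}(\iota_n D^{n-1}(A_{n-1}))=\T_n$: it depends on $\Coker t^n$ being adjoint to $\iota_n$ already at the level of the $n$- and $(n-1)$-derived categories (established before the statement) and on the elementary fact that an object of a triangulated category is zero once every morphism out of it vanishes. Everything else is bookkeeping with the $t$-adic filtration and the quasi-isomorphism of Lemma \ref{corepresent}.
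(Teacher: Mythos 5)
Your proof is correct, and it reaches the statement by a genuinely different route in its second half. For the existence of the decomposition triangles the paper does not invoke the abstract criterion for left admissible subcategories: it applies Proposition \ref{res1} with $X=\{G_n\}$ to produce, for each $M$, a $G_n$-cell module $P_G$ together with a closed morphism $P_G\to M$ inducing a quasi-isomorphism $t^nP_G\to t^nM$, so that the cone lands in $\iota_nD^{n-1}(A_{n-1})$ by Lemma \ref{ncategory}; since $G_n/t^nG_n$ is contractible, $P_G$ lies in the localizing subcategory $\T_n$, and this one construction yields both the required triangle and (since for $M\in\T_n$ the cone then lies in $\T_n\cap\iota_nD^{n-1}(A_{n-1})$, which vanishes by semiorthogonality) the fact that every object of $\T_n$ is $G_n$-cell, whence the second claim. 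You instead obtain the triangles formally from the left adjoint $\Coker t^n$, identifying $\T_n$ with ${}^{\perp}(\iota_nD^{n-1}(A_{n-1}))$ via the adjunction --- the same adjunction the paper uses for the semiorthogonality --- and you prove the second claim by checking that $G_n$ is a compact generator of $\T_n$ and citing \cite[Lemma 2.2.1]{schwedestablemodel}; your generation step (acyclicity of $t^nM$ plus $(n-1)$-acyclicity of $M/t^nM$ forces $n$-acyclicity) is the correct filtration bookkeeping. Both arguments are complete. The paper's approach buys the extra information that every object of $\T_n$ admits a $G_n$-semifree model, while yours is shorter and leans only on the corepresentability $\Hm{A_n}(G_n,-)\simeq t^n(-)$ from Lemma \ref{corepresent} together with general facts about admissible subcategories and compact generation already recorded in the preliminaries.
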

\begin{proof}
First we show that there are no nonzero morphisms from $\T_n$ to $\iota_n D^{n-1}(A_{n-1})$; let $M\in \T_n$ and $N\in \iota_{n} D^{n-1}(A_{n-1})$ and assume that $N\cong\iota_{n} N'$ for some $N'\in D^{n-1}(A_{n-1})$. We have \[
\Hm{D^n(A_n)}(M, N)\cong\Hm{D^n(A_n)}(M, \iota_{n}N')\cong \Hm{D^{n-1}(A_{n-1})}\left(\frac{M}{t^nM}, N'\right)
\] which is $0$ since $M$ lies in $\T_n$. To verify that this is a semiorthogonal decomposition, we construct for any $M$ a triangle \[
P_G \to M \to N \to P_G[1]
\] where $P_G$ is in $\T_n$ and $N$ is in $D^{n-1}(A_{n-1})$. For this, apply Proposition \ref{res1} to $M$ with $X=G_n$ to obtain a $G_n$-cell module $P_G$ with a closed morphism $P_G\to M$ inducing a quasi-isomorphism $t^n P_G\cong t^nM$; denoting with $N$ its cone we know that $t^n N$ is acyclic, so $N$ lies in $D^{n-1}(A_{n-1})$. Finally,  \[
\frac{G_n}{t^n G_n}\cong\operatorname{coCone}\left(\frac{\Gamma_n}{t^n \Gamma_n} \tow{\id}\frac{\Gamma_n}{t^n \Gamma_n}\right)\
\]is contractible so $G_n$ lies in $\T_n$. Since $\T_n$ is localizing, any $G_n$-cell module lies in $\T_n$; this also proves the second claim.
\end{proof}

\begin{proposition}\label{factors}
    The functor $\Img t^n \colon D^n(A_n)\to D(A)$  induces an equivalence 
\[\begin{tikzcd}
	\T_n & {D^n(A_n)} & {D(A).}
	\arrow[hook, from=1-1, to=1-2]
	\arrow["\Img t^n", from=1-2, to=1-3]
	\arrow["\sim", curve={height=-36pt}, from=1-1, to=1-3]
\end{tikzcd}\]
In particular, the functor $\Img t^n$ identifies $D(A)$ with the quotient $\frac{D^n(A_n)}{\iota_n D^{n-1}(A_{n-1})}$.
\end{proposition}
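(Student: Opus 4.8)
The plan is to deduce the statement from the semiorthogonal decomposition of Proposition \ref{decomposition} together with a dévissage along the generator $G_n$. First I would check that $\Img t^n$ genuinely defines a triangulated functor $D^n(A_n)\to D(A)$ commuting with coproducts: the $A_n$-module $t^nM$ is annihilated by $t$, hence is an $A$-module, and $d_M^2(t^nM)\subseteq t^{n+1}M=0$, so it is an honest complex; the assignment $M\mapsto t^nM$ sends graded split exact sequences to graded split exact sequences and commutes with the shift, and it carries $n$-acyclic modules to acyclic ones since $t^nM=\Gr_t^n(M)$, so it descends to the Verdier quotient; coproduct-compatibility is part of the proof of Proposition \ref{prodcoprod}. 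A short explicit computation then shows $\Img t^n(G_n)\cong A$: as a graded module $t^nG_n=t^n\Gamma_n=t^nA_n\cong A$, the twisting matrix $\gamma_n$ of $\Gamma_n$ vanishes on $t^n\Gamma_n$ (one entry lands in a $t^n$-torsion piece, the other is restricted to one), so the induced differential is just $d_A$.

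Next I would prove that $\Img t^n|_{\T_n}\colon \T_n\to D(A)$ is fully faithful, by dévissage from $G_n$. The base case is the object $G_n$ itself: by Lemma \ref{corepresent} and the $n$-homotopy projectivity of $G_n$, for every $A_n$-module $M$ one has
\[
\Hm{D^n(A_n)}(G_n,M[l])\;\cong\;H^l(t^nM)\;\cong\;\Hm{D(A)}\bigl(A,\Img t^n(M)[l]\bigr),
\]
and, unwinding the cone/cokernel identifications in the proof of Lemma \ref{corepresent}, this isomorphism is precisely the map induced by $\Img t^n$ under $\Img t^n(G_n)\cong A$. Now fix $N\in D^n(A_n)$. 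The full subcategory of those $X\in \T_n$ for which $\Img t^n$ induces isomorphisms $\Hm{D^n(A_n)}(X,N[l])\to \Hm{D(A)}(\Img t^nX,\Img t^nN[l])$ for all $l$ is triangulated, and it is closed under coproducts, since $\Hm{}{}$ turns coproducts in the first variable into products and $\Img t^n$ preserves coproducts; by the base case it contains $G_n$, hence it contains the minimal localizing subcategory generated by $G_n$, which is $\T_n$ by Proposition \ref{decomposition}. Taking $N\in\T_n$ gives full faithfulness.

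Essential surjectivity is then formal: the essential image of $\Img t^n|_{\T_n}$ is closed under coproducts ($\T_n$ is localizing and $\Img t^n$ preserves coproducts) and, by the full faithfulness just established, under cones and shifts, so it is a localizing subcategory of $D(A)$; it contains $\Img t^n(G_n)\cong A$, the free module, which is a compact generator of $D(A)$, hence it is all of $D(A)$. This yields the equivalence $\T_n\xrightarrow{\ \sim\ }D(A)$. For the last assertion, since $\iota_nD^{n-1}(A_{n-1})$ is left admissible with left adjoint $\Coker t^n$ and $\T_n={}^{\perp}(\iota_nD^{n-1}(A_{n-1}))$ by Proposition \ref{decomposition}, Proposition \ref{krause} applied to $D^n(A_n)=\langle \iota_nD^{n-1}(A_{n-1}),\T_n\rangle$ shows that $D^n(A_n)\to D^n(A_n)/\iota_nD^{n-1}(A_{n-1})$ admits a fully faithful left adjoint and that $\T_n\hookrightarrow D^n(A_n)\to D^n(A_n)/\iota_nD^{n-1}(A_{n-1})$ is an equivalence; as $\Img t^n$ kills $\iota_nD^{n-1}(A_{n-1})$ by Lemma \ref{ncategory}, it factors through this quotient, and comparison with the two equivalences above shows the induced functor $D^n(A_n)/\iota_nD^{n-1}(A_{n-1})\to D(A)$ is an equivalence.

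The one point that needs genuine care — and which I expect to be the main obstacle — is the compatibility claim in the second paragraph: that the isomorphism $\Hm{D^n(A_n)}(G_n,M[l])\cong H^l(t^nM)$ of Lemma \ref{corepresent} is really the one induced by the functor $\Img t^n$ together with $\Img t^n(G_n)\cong A$. Establishing this forces one back into the explicit identifications $\Hm{A_n}(G_n,M)\cong\Cn(J)\cong\Coker J\cong t^nM$ from the proof of Lemma \ref{corepresent}, tracking which component of a morphism $G_n\to M$ carries the information and verifying that applying $t^n$ extracts exactly that component. An alternative that avoids this bookkeeping would be to construct an explicit quasi-inverse $D(A)\to\T_n$ out of $G_n$ and check directly that the unit and counit are isomorphisms, but the dévissage route above seems the most economical.
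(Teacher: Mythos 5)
Your proposal is correct and follows essentially the same route as the paper: reduce full faithfulness to the compact generator $G_n$ via Lemma \ref{corepresent}, get essential surjectivity from coproduct-preservation and $\Img t^n(G_n)\cong A$, and identify the quotient via the semiorthogonal decomposition. The compatibility point you flag as the main obstacle is exactly what the paper verifies, by writing the quasi-isomorphism of Lemma \ref{corepresent} explicitly as $f\mapsto t^nf(1_n)$ and checking that the resulting square commutes.
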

\begin{proof}
    The functor is essentially surjective since it preserves coproducts and it sends $G_n\in \T_n$ to the generator $A\in D(A)$, so we only have to verify full faithfulness; since $G_n$ is a compact generator of $\T_n$ and it is sent to a compact object, it is enough (by the same classical argument as e.g. \cite[Proposition 1.15]{Lunts_2010}) to show that \[
    \Hm{A_n}(G_n, G_n) \to \Hm{A}(A,A)
    \]is a quasi-isomorphism; this will follow once we prove the commutativity of the square
\[\begin{tikzcd}
	{\Hm{A_n}(G_n, G_n)} & {\Hm{A}(A,A)} \\
	{t^nG_n} & A
	\arrow[ from=1-1, to=1-2]
	\arrow["\varphi"', from=1-1, to=2-1]
	\arrow["\sim", from=2-1, to=2-2]
	\arrow["\sim", sloped,  from=1-2, to=2-2]
\end{tikzcd}\]
where the right and lower arrows are the obvious isomorphisms, and $\varphi$ is the quasi-isomorphism of Lemma \ref{corepresent}. To see the explicit form of the morphism $\varphi$, recall that $G_n$ has a graded submodule (not preserved by the predifferential) $A_n\subseteq G_n$ corresponding to the graded submodule $A_n\subseteq \Gamma_n$; denote by $1_n\in G_n$ the element corresponding to the unit of $A_n$. The morphism $\varphi$ is then given explicitly by 
\[\begin{split}
\Hm{A_n}(G_n, M) &\to t^n M\\
[f\colon G_n \to M ] & \to t^nf(1_n);
\end{split}
\]in this form, it is clear that the square commutes and we are done.
\end{proof}
\begin{remark}\label{remrec}
    In fact it is very easy to see that $\iota_{n}D^{n-1}(A_{n-1})\subseteq D^n(A_n)$ is an admissible 
    subcategory: the right adjoint to the embedding is given by the functor $\Ker t^n$ and the left adjoint by $\Coker t^n$; using the description of the quotient $\frac{D^n(A_n)}{\iota_{n}D^{n-1}(A_{n-1})}$ given in Proposition \ref{factors}, we see that there is a recollement
\[\begin{tikzcd}
	{D^{n-1}(A_{n-1})} & {D^{n}(A_{n})} & {D(A).}
	\arrow["\iota_{n}", hook, from=1-1, to=1-2]
	\arrow["{\Img t^n}"{pos=0.6}, from=1-2, to=1-3]
	\arrow[curve={height=-28pt}, from=1-2, to=1-1]
	\arrow[curve={height=28pt}, from=1-2, to=1-1]
	\arrow[curve={height=28pt}, from=1-3, to=1-2]
	\arrow[curve={height=-28pt}, from=1-3, to=1-2]
\end{tikzcd}\]
\end{remark}
\begin{definition}

Define the subcategories $\T_i\subseteq D^n(A_n)$ for $i=0,\ldots, n$ as given by the modules $M$ for which all the graded components $\Gr_t^j(M)$ for $j\neq i$ are acyclic. 
    
\end{definition}
By the same argument as Lemma \ref{ncategory}, the subcategory $\T_0$ coincides with $\iota_{1}D(A)$. 
\begin{theorem}\label{filtrationscat}
    The $n$-derived category $D^n(A_n)$ admits a semiorthogonal decomposition \[
   D^n(A_n)= \langle \T_0, \T_1, \ldots \T_n \rangle
    \]and the functors \[\begin{split}
       \Gr_t^i(-)\colon  D^n(A_n) &\to D(A)\\
    \end{split}\] induce equivalences $\T_i\tow{\sim} D(A)$.
\end{theorem}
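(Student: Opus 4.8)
The plan is to prove both assertions at once by induction on $n$, bootstrapping from the two‑term semiorthogonal decomposition already in hand (Proposition \ref{decomposition}) and the admissibility recorded in Remark \ref{remrec}. The base case $n=0$ is trivial: $D^0(A_0)=D(A)$, the subcategory $\T_0$ is everything, and $\Gr_t^0=\id$.

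For the inductive step I would assume the theorem for $n-1$, writing $\T_0',\dots,\T_{n-1}'\subseteq D^{n-1}(A_{n-1})$ for the subcategories it produces, with $D^{n-1}(A_{n-1})=\langle\T_0',\dots,\T_{n-1}'\rangle$ and $\Gr_t^i\colon\T_i'\xrightarrow{\ \sim\ }D(A)$. Since $\iota_n$ is triangulated and fully faithful (Corollary \ref{embeddings}), applying it to the iterated decomposition triangles of $D^{n-1}(A_{n-1})$ exhibits $\langle\iota_n\T_0',\dots,\iota_n\T_{n-1}'\rangle$ as a semiorthogonal decomposition of the admissible subcategory $\iota_nD^{n-1}(A_{n-1})\subseteq D^n(A_n)$. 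I would then concatenate these triangles with the decomposition triangle $S_1\to X\to S_0\to S_1[1]$ of Proposition \ref{decomposition} ($S_1\in\T_n$, $S_0\in\iota_nD^{n-1}(A_{n-1})$), using that $\Hm{D^n(A_n)}(\T_n,\iota_n\T_i')\subseteq\Hm{D^n(A_n)}(\T_n,\iota_nD^{n-1}(A_{n-1}))=0$ for each $i<n$; this is the familiar fact that a semiorthogonal decomposition of a component refines the ambient one, and it yields $D^n(A_n)=\langle\iota_n\T_0',\dots,\iota_n\T_{n-1}',\T_n\rangle$.

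The remaining work is to recognize $\iota_n\T_i'$ as the subcategory $\T_i\subseteq D^n(A_n)$ of the statement for $i\le n-1$ and to carry across the equivalences. The crucial elementary observation is that restriction of scalars along $A_n\twoheadrightarrow A_{n-1}$ leaves $t^jM$ unchanged for $j\le n-1$ and forces $t^nM=0$, so $\Gr_t^j\circ\iota_n=\Gr_t^j$ for $j\le n-1$ while $\Gr_t^n\circ\iota_n=0$. One inclusion $\iota_n\T_i'\subseteq\T_i$ is then immediate; for the reverse, if $M\in\T_i$ with $i\le n-1$ then $\Gr_t^n(M)=t^nM$ is acyclic, so Lemma \ref{ncategory} gives $M\cong\iota_nM'$, and since $\Gr_t^j$ descends to $D^n(A_n)$ it preserves this isomorphism, so $\Gr_t^j(M')\cong\Gr_t^j(M)$ is acyclic for every $j\in\{0,\dots,n-1\}\setminus\{i\}$, i.e. $M'\in\T_i'$; hence $\T_i=\iota_n\T_i'$. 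Finally, the equivalence for $\T_n$ is exactly Proposition \ref{factors} (with $\Img t^n=\Gr_t^n$), and for $i<n$ the relation $\Gr_t^i\circ\iota_n=\Gr_t^i$ restricts to $\Gr_t^i|_{\T_i}=(\Gr_t^i|_{\T_i'})\circ(\iota_n|_{\T_i'})^{-1}$, a composite of equivalences by the inductive hypothesis and Corollary \ref{embeddings}.

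The step I expect to need the most care is the identification $\iota_n\T_i'=\T_i$: one has to keep the $t$‑adic filtrations of $M$ over $A_n$ and over $A_{n-1}$ carefully aligned, and one genuinely needs Lemma \ref{ncategory} to upgrade the acyclicity of $t^nM$ that is built into the definition of $\T_i$ (for $i<n$) into actual membership in the essential image of $\iota_n$. The semiorthogonality vanishings and the splicing of decomposition triangles are then routine.
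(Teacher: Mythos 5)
Your proposal is correct and follows essentially the same route as the paper: the paper's proof is precisely the iteration of Propositions \ref{decomposition} and \ref{factors}, with the identification of the resulting factors with the $\T_i$ carried out ``by reasoning as in Lemma \ref{ncategory}'', which is exactly your induction together with the step $\T_i=\iota_n\T_i'$ via Lemma \ref{ncategory}. Your write-up simply makes explicit the splicing of decomposition triangles and the compatibility $\Gr_t^j\circ\iota_n=\Gr_t^j$ ($j\le n-1$), $\Gr_t^n\circ\iota_n=0$, that the paper leaves implicit.
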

\begin{proof}
     This follows by iterating the arguments of Propositions \ref{decomposition} and \ref{factors}; the only further thing to check is that the intersection of $\iota_{n} D^{n-1}(A_{n-1})$ with the subcategory of the modules $M$ for which $\Gr_t^{n-1}(M)$ is $(n-2)$-acyclic is given as claimed by the modules $M$ for which $\Gr_t^i(M)$ is acyclic for $i\neq n-1$, but this follows from reasoning as in Lemma \ref{ncategory}.
\end{proof}
In particular if $A_n$ is a first order deformation of $A$, there is a short exact sequence of categories\footnote{By this we mean that the first arrow is fully faithful and the second is a quotient whose kernel is given by the essential image of the first one.} \[
0 \to D(A) \hookrightarrow D^1(A_1) \tow{M \to tM} D(A) \to 0
\] categorifying 
the usual square zero extension \[
0 \to A \overset{}{\hookrightarrow} A_1 \to A \to 0.
\]corresponding to the deformation $A_1$. 

\subsection{Categorical resolutions}
In this section we will assume that $A_n$ has no curvature, so that we can talk about the classical derived category $D(A_n)$. Recall from \cite{Kuznetsov_2014} that a dg-algebra $A$ is said to be \emph{smooth} if the diagonal bimodule $A\in D(A\otimes \op{A})$ is compact; the same definition holds for a dg-category\footnote{One should be careful about size issues, which we ignore here; for an in depth treatment, see \cite[Appendix A]{Lunts_2010}.}. A \emph{categorical resolution} of a pretriangulated dg-category $\A$ is a smooth pretriangulated dg-category $\C$ equipped with a dg-functor $\A\to \C$ which induces a fully faithful functor between the homotopy categories. One sees that if $A$ is a smooth dg-algebra, then its (dg-enhanced) derived category $D(A)$ is smooth. On the other hand, even when $A$ is smooth, the deformation $A_n$ is never smooth and thus neither is $D(A_n)$; this is due to the fact that the base ring $R_n$ is not reduced and thus is not itself smooth. On the other hand, it turns out that if $A$ is smooth the same holds for $D^n(A_n)$, and we already know from Corollary \ref{embedding} that there is a natural dg-functor $D(A_n)\to D^n(A_n)$ which is fully faithful at the homotopy level. We can therefore prove the following fact:

\begin{proposition}
    If the dg-algebra $A$ is smooth, the $n$-derived category $D^n(A_n)$ is a categorical resolution of $D(A_n)$.
\end{proposition}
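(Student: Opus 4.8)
The statement has two parts, and the first is already granted by the discussion preceding the proposition: by Corollary~\ref{embedding} there is a dg-functor $D(A_n)\to D^n(A_n)$ that is fully faithful on homotopy categories. So the entire task is to show that (the dg enhancement of) $D^n(A_n)$ is a smooth dg-category. Since $D^n(A_n)$ is compactly generated by the perfect objects $\Gamma_0,\dots,\Gamma_n$ (Theorem~\ref{propcompgen}), which are $n$-homotopy projective, it is equivalent to $D(B)$ for the dg algebra $B=\operatorname{REnd}_{A_n}\!\big(\bigoplus_{i=0}^n\Gamma_i\big)$ computed inside $A_n\Mod$; and smoothness of $D^n(A_n)$ — equivalently of its enhancement $\mathcal{SF}^n(A_n)$, smoothness being Morita-invariant — is equivalent to smoothness of $B$. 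I would prove this through the semiorthogonal decomposition.

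By Theorem~\ref{filtrationscat}, $D^n(A_n)=\langle\T_0,\dots,\T_n\rangle$ with each $\T_i$ equivalent to $D(A)$ via $\Gr^i_t$. As $A$ is smooth, each factor $D(A)$ is a smooth dg-category, so $D^n(A_n)$ is an iterated gluing of smooth dg-categories; equivalently, $B$ is an ``upper-triangular'' dg algebra with diagonal blocks $\operatorname{REnd}(G_i)\simeq A$ (Proposition~\ref{factors}, where $G_i$ denotes a compact generator of $\T_i$) and off-diagonal blocks the bimodules $\operatorname{RHom}_{D^n(A_n)}(G_i,G_j)$. By the gluing theorem for smooth dg-categories (Orlov; Kuznetsov--Lunts), it suffices to check that these off-diagonal bimodules are perfect. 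Here I would iterate the recollement~\eqref{eqrecol}: it presents $D^n(A_n)$ as the gluing of $D^{n-1}(A_{n-1})$ — smooth by induction, the base case $D^0(A_0)=D(A)$ being smooth — and $D(A)$, with gluing bimodule $\varphi=\bigoplus_{i<n}\operatorname{RHom}_{A_n}(G_i,G_n)$, a left $\operatorname{REnd}(G_n)$-module and right $\operatorname{REnd}\!\big(\bigoplus_{k<n}G_k\big)$-module.

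It remains to identify $\varphi$. Since $G_i$ is $n$-homotopy projective, $\operatorname{RHom}_{A_n}(G_i,-)$ is the honest hom-complex $\Hm{A_n}(G_i,-)$, which by the generalization of Lemma~\ref{corepresent} to all $i$ is naturally isomorphic to $t^i(-)$; being a hom-functor it is exact on $D^n(A_n)$. Applying it to the triangle $G_n\to\Gamma_n\to\Gamma_n/t^n\Gamma_n\to G_n[1]$ defining $G_n$, and using that for $i<n$ the map $t^i\Gamma_n\to t^i(\Gamma_n/t^n\Gamma_n)=t^i\Gamma_n/t^n\Gamma_n$ is the quotient by $t^n\Gamma_n$, one gets $\operatorname{RHom}_{A_n}(G_i,G_n)\simeq t^n\Gamma_n$. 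Here the hypothesis $c=0$ enters decisively: it makes the twisting matrix of $\Gamma_n$ involve only $t$, so that $t^n\Gamma_n$ is simply the diagonal $A$-bimodule $A$ (concentrated in degree zero with differential $d_A$). Thus $\varphi\simeq A^{\oplus n}$ is perfect as a left $\operatorname{REnd}(G_n)\simeq A$-module; since $A$ is smooth, the standard fact that a bimodule over a smooth dg algebra which is perfect on one side is perfect as a bimodule shows $\varphi$ is a perfect bimodule. Hence $D^n(A_n)$ is smooth, and together with Corollary~\ref{embedding} this exhibits $D^n(A_n)$ as a categorical resolution of $D(A_n)$.

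The main obstacle is the bookkeeping in the last step: one must track the left $\operatorname{REnd}(G_n)$- and right $\operatorname{REnd}(G_i)$-module structures through the quasi-isomorphism $\operatorname{RHom}_{A_n}(G_i,G_n)\simeq A$ and check that it is genuinely the diagonal $A$-bimodule, and one must verify that the generalization of Lemma~\ref{corepresent} is natural enough to make $\operatorname{RHom}_{A_n}(G_i,-)\simeq t^i(-)$ an isomorphism of functors (so that it may be applied to triangles). A softer route, which avoids pinning down the precise bimodule structure, is to observe only that $\operatorname{RHom}_{A_n}(G_i,G_n)$ admits a finite filtration with subquotients isomorphic to $A$ as $A$-$A$-bimodules — an immediate consequence, when $c=0$, of the $t$-adic filtration on $\Gamma_n$ — so that it lies in the thick subcategory of $D(A\otimes_k\op{A})$ generated by $A$, which is $\operatorname{perf}(A\otimes_k\op{A})$ precisely because $A$ is smooth. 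Everything else is formal once the gluing theorem and the inductive recollement are in place.
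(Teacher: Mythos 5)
Your overall strategy is the same as the paper's: induct along the recollement \eqref{eqrecol}, view $D^n(A_n)$ as a gluing of smooth pieces along the bimodule of morphisms between the semiorthogonal generators, reduce to one-sided perfection of that bimodule (the paper cites exactly the Kuznetsov--Lunts and To\"en--Vaqui\'e lemmas you have in mind), and then compute the bimodule from the explicit shape of $G_n$. The paper carries this out for $n=1$ with $E=\bigl[\begin{smallmatrix} A & 0\\ X & E_1\end{smallmatrix}\bigr]$, $X=\Hm{A_1}(G_0,G_1)$, and asserts the general case is the analogous induction.

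There is, however, a concrete error in your identification of the gluing bimodule. The corepresentability statement is $\Hm{A_n}(G_i,-)\simeq t^i\Ker t^{i+1}(-)=\Gr_K^i(-)$, not $t^i(-)$: Lemma \ref{corepresent} gives $t^n(-)$ only because $\Ker t^{n+1}_M=M$, and already for $i=0$ one has $\Hm{A_n}(G_0,-)=\Hm{A_n}(A,-)=\Ker t(-)$, which is very far from the identity functor. Consequently your conclusion $\operatorname{RHom}_{A_n}(G_i,G_n)\simeq t^n\Gamma_n\simeq A$ is wrong: the paper's computation (for $n=1$) gives $X\simeq \Ker t_{G_1}\simeq \frac{\Gamma_1}{t\Gamma_1}[-1]\simeq \operatorname{Cone}(\frac{c}{t}\colon A[-1]\to A[1])[-1]$, i.e.\ an \emph{extension of two} copies of $A$, not a single copy. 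This does not sink the argument, because perfection only requires the bimodule to lie in the thick subcategory generated by the diagonal, and your own ``softer route'' (a finite filtration with subquotients $A$) is exactly what survives and is essentially what the paper proves via the triangle $A\to X\to A\to A[1]$. Two further remarks: the hypothesis $c=0$ does \emph{not} enter the perfection of $X$ decisively as you claim --- the paper's triangle holds for arbitrary curvature, with $\frac{c}{t}$ appearing as the connecting map; $c=0$ is needed only so that $D(A_n)$ exists and the comparison functor of Corollary \ref{embedding} makes sense. And your reduction of one-sided perfection should be checked against the correct side: the paper forgets the $E_1$-action and verifies perfection over $A$, which is the smooth factor, matching the hypotheses of the cited lemma.
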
\begin{proof}
    We prove the case $n=1$, the general case being a similar (but computationally more involved) induction argument. We only have to prove that $D^1(A_1)$ is a smooth dg category. By Proposition \ref{decomposition}, the category $D^1(A_1)$ admits two semiorthogonal compact generators $G_0, G_1$ such that $\Hm{A_1}(G_0, G_0)$ is isomorphic to $A$ and there is a natural quasi-isomorphism \[E_1=\Hm{A_1}(G_1, G_1)\to A.\] By a standard argument, $D^1(A_1)$ is then equivalent to the derived category of the matrix algebra \[
E=\begin{bmatrix}
     A & 0 \\
     X & E_1\\
\end{bmatrix}
\] where $X$ is the $A$-$E_1$ bimodule $\Hm{A_n}(G_0, G_1)$. We know that $A$ is homologically smooth, and that, being quasi-isomorphic to $A$, so is $E_1$. Therefore by \cite[Theorem 3.24]{smootheq} to show that $D^1(A_1)$ is homologically smooth it is sufficient to show that $X$ is perfect as an $A$-$E_1$ bimodule. We can apply \cite[Lemma 2.8]{moduliofob} to show that in order to prove that $X$ is perfect as an $A$-$E_1$ bimodule it is sufficient to see that the induced $A$-module obtained by forgetting the $E_1$-action is perfect.
For that it is enough to look at the explicit form of $X$. Indeed, we have \[
X\cong \Ker t_{G_1} \cong \operatorname{coCone}(\Ker t_{\Gamma_1} \to \Ker t_{\frac{\Gamma_1}{t\Gamma_1}});
\] The module $\Ker t_{\Gamma_1}$ is contractible and $\Ker t_{\frac{\Gamma_1}{t\Gamma_1}}=\frac{\Gamma_n}{t\Gamma_1}$, so there is a natural homotopy equivalence of $A_n$-modules - and thus of $A$-modules \[
X[1] \cong \frac{\Gamma_n}{t\Gamma_1}\cong \operatorname{Cone}\left(\frac{c}{t}\right)
\] where $\frac{c}{t}$ is seen as a closed morphism $A[-1]\to A[1]$; hence there is a triangle in $\hot(A)$ \[
A\to X \to A \to A[1]
\] and $X$ is perfect as an $A$-module.
\end{proof}
\section{Relation with the semiderived category} \label{parsemider}

In this whole section we will assume $k$ to be a field. In \cite{Positselski_2011, Positselski_2018}, Positselski defines several notions of acyclicity for cdg modules; we recall those, and explain the relation to ours (Corollary \ref{corsemider}). 
\begin{definition}{(\cite{Positselski_2011})}
    A cdg $A_n$-module is said to be \emph{absolutely acyclic} if it lies in the minimal thick subcategory of $\hot(A_n)$ containing all totalizations of short exact sequences; it is \emph{coacyclic} if it lies in the minimal triangulated subcategory of $\hot(A_n)$ which contains all totalizations of short exact sequences and is closed under arbitrary coproducts, and \emph{contraacyclic} if it lies in the minimal triangulated subcategory of $\hot(A_n)$ which contains totalizations of short exact sequences and is closed under arbitrary products.

\end{definition}
One key property of contraacyclic modules is that they are right orthogonal to the graded projective $A_n$ modules; dually, coacyclic modules are left orthogonal to the graded injective $A_n$-modules (\cite[Theorem 3.5.1]{Positselski_2011}).

\begin{definition}{(\cite{Positselski_2018})}
    An $R_n$-free $A_n$-module $M$ is said to be \emph{semiacyclic} if $M/tM$ is an acyclic $A$-module. An arbitrary $A_n$-module is \emph{semiacyclic as a comodule} if it lies in the minimal thick subcategory of $\hot(A_n)$ containing all coacyclic modules and all $R_n$-free semiacyclic modules, and \emph{semiacyclic as a contramodule} if it lies in the minimal thick subcategory of $\hot(A_n)$ containing all contraacyclic modules and all $R_n$-free semiacyclic modules. Define the category $\dsi(A_n^{R_n-\operatorname{fr}})$ as the quotient of the homotopy category of $R_n$-free modules by the semiacyclic $A_n$-modules, the category $\dsi(A_n^{\operatorname{co}})$ as the quotient of $\hot(A_n)$ by modules which are semiacyclic as comodules and $\dsi(A_n^{\operatorname{contra}})$ as the quotient of $\hot(A_n)$ by the modules which are semiacyclic as contramodules.

\end{definition}
There are natural functors \[
\dsi(A_n^{R_n-\operatorname{fr}}) \to \dsi(A_n^{\operatorname{contra}}) \text{ and } \dsi(A_n^{R_n-\operatorname{fr}}) \to \dsi(A_n^{\operatorname{co}})
\]
induced by the inclusion of the homotopy category of $R_n$-free $A_n$-modules into $\hot(A_n)$ which are shown in \cite[Theorem 4.2.1]{Positselski_2018} to be equivalences; one refers to either version interchangeably as the \emph{semiderived category} of $A_n$, which is then denoted with $\dsi(A_n)$. In particular, it follows from \cite[Theorem 4.2.1]{Positselski_2018} that an $R_n$-free $A_n$-module is semiacyclic as a co/contramodule if and only if it is semiacyclic as an $R_n$-free module.
\begin{remark}
    In the definitions of \cite{Positselski_2018}, the different versions of the semiderived categories are defined starting from categories of comodules and contramodules; however, since the rings $R_n$ are artinian every module is both a comodule and a contramodule, and the only difference lies in the class of acyclics.
\end{remark}
There is a fundamental difference between $D^n(A_n)$ and the categories that we just presented: since in the semiderived category - and in general, in all the derived categories considered in \cite{Positselski_2011} and \cite{Positselski_2018} - the absolutely acyclic modules are quotiented out, short exact sequences give rise to triangles in the quotient. This crucially is not the case in $D^n(A_n)$, where it is easy to give examples of absolutely acyclic modules which are not $n$-acyclic; consider again the simple case $A=k$, $A_1=R_1$. Then the $R_1$-module \[
M=0 \to k \to R_1 \to k \to 0
\]is absolutely acyclic but not $1$-acyclic. There are nonetheless relations between the two notions, as we now show.
\begin{lemma}\label{Rnfree}
    An $R_n$-free module $M$ is semiacyclic if and only if it is $n$-acyclic.
\end{lemma}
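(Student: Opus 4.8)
The plan is to use $R_n$-freeness to identify, as complexes, every graded piece of the $t$-adic filtration of $M$ with $M/tM$; once this is done the equivalence is immediate, since semiacyclicity of an $R_n$-free module is by definition the acyclicity of $M/tM$, whereas $n$-acyclicity is the acyclicity of all of $\Gr_t^0(M), \dots, \Gr_t^n(M)$.

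The key point I would establish first is that if $M$ is $R_n$-free, then $\Ker t^i_M = t^{\,n+1-i}M$ for all $0 \le i \le n+1$ — this is checked componentwise, each $M^j$ being a free $\kn$-module, and is the same computation already used in the excerpt for $A_n$ itself. Granting this, for each $0 \le i \le n$ multiplication by $t^i$ induces a map
\[
\bar t^{\, i} \colon M/tM \tow{\sim} t^iM/t^{i+1}M = \Gr_t^i(M),
\]
which is well defined and surjective by construction, and injective because $t^i m \in t^{i+1}M$ forces $m - t m' \in \Ker t^i_M = t^{\,n+1-i}M \subseteq tM$ for a suitable $m'$ (here $i \le n$ is used), hence $m \in tM$. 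It is a chain map because $t \in R_n$ is central and $d_{A_n}$-closed, so $d_M(t^i m) = t^i d_M(m)$, and this descends to the quotients, which do carry well-defined differentials since $c \in tA_n$ gives $d_M^2(t^iM) \subseteq t^{i+1}M$. Thus each $\bar t^{\,i}$ is an isomorphism of complexes.

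With this in hand the lemma follows at once in both directions: if $M$ is $n$-acyclic then in particular $\Gr_t^0(M) = M/tM$ is acyclic, i.e.\ $M$ is semiacyclic; conversely, if $M$ is semiacyclic then $M/tM$ is acyclic, so via the isomorphisms $\bar t^{\,i}$ every $\Gr_t^i(M)$ is acyclic, whence $\Gr_t(M)$ is acyclic and $M$ is $n$-acyclic. I do not expect any genuine obstacle here; the only point needing a little care is checking that the $\bar t^{\,i}$ are isomorphisms of \emph{complexes} and not merely of graded modules, which is exactly what centrality and $d_{A_n}$-closedness of $t$ provide. One could equivalently argue through the $K$-filtration via Proposition \ref{kacyatcy}, noting that for $R_n$-free $M$ one has $\Gr_K^i(M) = \Ker t^{i+1}_M/\Ker t^i_M = t^{\,n-i}M/t^{\,n+1-i}M = \Gr_t^{n-i}(M)$, so the two filtrations simply reverse one another; but the $t$-adic version above is the most direct.
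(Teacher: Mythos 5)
Your proposal is correct and follows essentially the same route as the paper: both arguments reduce to the identity $\Ker t^i_M = t^{\,n+1-i}M \subseteq tM$ for $R_n$-free $M$, which yields the isomorphism of complexes $M/tM \cong \Gr_t^i(M)$ (the paper packages this via the short exact sequence $0 \to \Ker t^i_M/(\Ker t^i_M \cap tM) \to M/tM \to t^iM/t^{i+1}M \to 0$, you check injectivity of $\bar t^{\,i}$ directly). No gaps.
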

\begin{proof}
    Obviously any $n$-acyclic $R_n$-free module is semiacyclic. For the converse, assume $M$ to be semiacyclic and consider the short exact sequence \[
    0 \to \frac{\Ker t^i_M}{\Ker t^i_M\cap tM }\to \frac{M}{tM} \tow{t^i} \frac{t^iM}{t^{i+1}M}\to 0;
    \] since $M$ is $R_n$-free, $\Ker t^i_M= t^{n-i+1}M$ and $\Ker t^i_M\cap tM= \Ker t^i_M$ so $\Gr_t^i(M)\cong M/tM$ for all $i$, and $M$ is $n$-acyclic.

\end{proof}

As a consequence, the inclusion from the homotopy category of $R_n$-free $A_n$-modules into $\hot(A_n)$ carries semiacyclic modules to $n$-acyclic modules and defines a functor \[\varphi \colon
\dsi(A_n^{R_n\operatorname{-fr}}) \to D^n(A_n).
\]

\subsubsection*{Free resolutions}
In \cite{Positselski_2018}, the author constructs for any $A_n$-module $M$ a triangle in $\hot(A_n)$ \begin{equation}\label{freeres}
M^{\operatorname{fr}} \to M \to C \to M^{\operatorname{fr}}[1]    
\end{equation} where $C$ is contraacyclic and $M^{\operatorname{fr}}$ is $R_n$-free. Concretely, he builds an exact sequence \[
\ldots \to F_2 \to F_1 \to M \to 0
\] where each $F_i$ is $R_n$-free, and defines \[
M^{\operatorname{fr}}=\Tot^\Pi(\ldots\to F_2 \to F_1  \to 0)
\]so that \[
C=\Tot^\Pi(\ldots \to F_2 \to F_1 \to M \to 0)
\] is contraacyclic by \cite[Lemma 4.2.2]{Positselski_2018}. Note that, since for the ring $R_n$ the classes of free and cofree modules coincide, a product of free modules is still a free module. For the same reason, any $R_n$-module admits an injective map into an $R_n$-free module and the argument above dualizes to obtain a triangle \[B \to M \to M^{\operatorname{cofr}} \to B[1]  \] with \[
M^{\operatorname{cofr}}= \Tot^\oplus(0 \to F_1' \to F_2' \to \ldots)
\] where each $F_i'$ is $R_n$-free such that \[
B=\Tot^\oplus(0 \to M \to F_1' \to F_2' \to \ldots)
\] is coacyclic.

\subsection{Derived functors of the reductions}

Consider the functor \[
\Coker t\colon A_n\Mod \to A\Mod,
\] and denote it with $Q$ for simplicity. 

The categories $A_n\Mod$ and $A\Mod$ are dg categories, but their underlying $1$-categories $Z^0A_n\Mod$ and $Z^0A\Mod$ have a natural abelian structure. Seen as a functor between abelian categories, the functor $Q$ is right exact and as such we are interested in its left derived functors. Although we have not shown the category $Z^0 A_n\Mod$ to have enough projectives, $R_n$-free modules are adapted to $Q$ and there are enough $R_n$-free modules, so the left derived functors exist. The next proposition gives a proof of this fact by explicitly describing the higher derived functors.
\begin{proposition}
    The functor $Q$ admits left derived functors $L^iQ$ defined as \[ L^iQ(M)=\begin{cases}\displaystyle Q(M) \text{ for } i=0, \\ \\
        \displaystyle \frac{\Ker t_M}{t^nM} \; \operatorname{ for } i \operatorname{ odd},\\
         \\
       \displaystyle \frac{\Ker t^n_M}{tM}\; \operatorname{ for } i>0  \operatorname{ and \, even.}
    \end{cases}
    \] 
\end{proposition}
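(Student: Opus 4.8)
The plan is to compute $L^iQ$ through the adapted class of $R_n$-free $A_n$-modules, which the proposition's preamble already singles out. Two inputs are needed to run the relative-derived-functor machinery: \emph{(i)} every $A_n$-module $M$ is a quotient of an $R_n$-free one --- e.g. the multiplication map $A_n\otimes_k M^\#\to M$ is a closed surjection and $A_n\otimes_k M^\#$ is $R_n$-free since $A_n$ is, or one may invoke the free-resolution construction recalled above --- and \emph{(ii)} the $R_n$-free modules are closed under finite direct sums. The zeroth derived functor equals $Q$ itself, as $Q$ is right exact.

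The one genuinely substantive point is that an $R_n$-free $A_n$-module is \emph{flat} over $R_n$. Identifying $Q(-)=k\otimes_{R_n}(-)$ on underlying graded $R_n$-modules (with $A=A_n/tA_n$ acting residually), one deduces that $Q$ carries any exact complex $\cdots\to F_1\to F_0\to 0$ of $R_n$-free modules to an exact complex: such a complex is a flat resolution of the zero module, so tensoring it over $R_n$ with $k$ preserves exactness. Hence the $R_n$-free modules are adapted to $Q$, the functors $L^iQ$ are well defined, and $L^iQ(M)=H_i(F_\bullet/tF_\bullet)$ for any $R_n$-free resolution $F_\bullet\to M$.

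To make the homology explicit, I would observe that, since an exact sequence of $A_n$-modules is exact on underlying graded $R_n$-modules and each $F_j$ is $R_n$-free, $F_\bullet\to M$ is in particular an $R_n$-free resolution of $M$ over $R_n$; hence $H_i(F_\bullet/tF_\bullet)=\operatorname{Tor}^{R_n}_i(M,k)\cong\operatorname{Tor}^{R_n}_i(k,M)$, which I would then evaluate from the standard $2$-periodic $R_n$-free resolution
\[
\cdots\tow{t^n}R_n\tow{t}R_n\tow{t^n}R_n\tow{t}R_n\longrightarrow k\longrightarrow 0
\]
of $k=R_n/(t)$, exact because $\Ker(t\colon R_n\to R_n)=t^nR_n$ and $\Ker(t^n\colon R_n\to R_n)=tR_n$. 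Tensoring it over $R_n$ with $M$ produces the complex $\cdots\tow{t^n}M\tow{t}M\tow{t^n}M\tow{t}M$ in non-negative degrees, whose homology is $M/tM=Q(M)$ in degree $0$, $\Ker t_M/t^nM$ in every odd degree, and $\Ker t^n_M/tM$ in every positive even degree --- exactly the asserted formula. A final remark fixes the module structure: $t$ annihilates each of these modules (since $t\cdot\Ker t_M=0$, $t\cdot t^nM=0$ and $t\cdot\Ker t^n_M\subseteq tM$), so the residual $A_n$-action factors through $A$ and agrees with the intrinsic $A$-module structure carried by $L^iQ(M)$.

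The main obstacle is the adaptedness statement, but it collapses to the standard facts that $R_n$-freeness implies $R_n$-flatness and that tensoring a bounded-above complex of flats preserves acyclicity; granting these, the remainder is the routine computation of a periodic $\operatorname{Tor}$ over the hypersurface ring $R_n$. I expect the fussiest part to be not any homological difficulty but the bookkeeping checking that the comparison between the two standard models of $\operatorname{Tor}$ respects the differentials and the residual $A$-action --- which, however, is forced by functoriality of the comparison isomorphism.
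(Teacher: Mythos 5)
Your argument is correct and arrives at the stated formulas, but by a genuinely different route. The paper never sets up an adapted class: it takes the formulas as candidates, realizes them as the cohomology of the periodic complex $K^t=\cdots\to K\tow{t}K\tow{t^n}K\tow{t}K\to 0$, obtains the long exact sequence directly from the termwise short exact sequence of these complexes, and then concludes by coeffaceability (the candidates vanish in positive degree on $R_n$-free modules, and every module is a quotient of one) that this $\delta$-functor is universal and hence equals $L^\bullet Q$. You instead verify that the $R_n$-free modules form a class adapted to $Q$ --- via flatness of the underlying graded $R_n$-modules --- and identify $L^iQ(M)$ with $\operatorname{Tor}^{R_n}_i(M,k)$ computed from the $2$-periodic resolution of $k$ over the hypersurface ring $R_n$. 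The two computations meet in the same periodic complex: your $M\otimes_{R_n}P_\bullet$ is exactly the paper's $M^t$. The paper's approach has the advantage of producing the long exact sequence explicitly (which is what is actually used in Proposition \ref{nacysacy}), while yours makes the $2$-periodicity conceptually transparent and immediately justifies the ``compute by $R_n$-free resolutions'' recipe that the paper only states after the proof.

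One caveat: the ad hoc surjection $A_n\otimes_k M^\#\to M$ you offer first is problematic. The natural predifferential $d_{A_n}\otimes 1\pm 1\otimes d_M$ on $A_n\otimes_k M^\#$ squares to $[c,-]\otimes 1+1\otimes(c\,\cdot)$ rather than to left multiplication by $c$, since the tensor product is taken over $k$ and $ac\otimes m\neq a\otimes cm$ in general; so this graded module does not obviously carry a cdg structure for which the multiplication map is closed. This does not damage the proof, because your stated fallback --- the surjection from an $R_n$-free module constructed in Positselski's proof of \cite[Theorem 4.2.1]{Positselski_2018}, which the paper itself invokes for coeffaceability --- supplies the required input; but the first construction should be dropped or repaired (e.g.\ by using the left adjoint $G^+$ to the forgetful functor to graded modules applied to a free graded $A_n^\#$-module surjecting onto $M^\#$).
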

\begin{proof}
    We show that, by setting $L^0Q=Q$ and $L^iQ$ as above, the collection $L^*Q$ gives a universal homological $\delta$-functor. Let \[
    0 \to M \to N \to L \to 0
    \] be a short exact sequence of $A_n$-modules. Consider, for any $A_n$-module $K$, the complex of $A_n$-modules \[
K^t= \ldots\to K  \tow{t} K \tow{t^n} K \tow{t} K \to 0. 
\] By definition, one has $H^0K^t=Q(K)$, $H^{-i} K^t= L^iQ(K)$ for $i>0$. Since the morphisms $M\to N$ and $N \to L$ are morphisms of $A_n$-modules, they commute with the action of $t$ and induce a short exact sequence of complexes \[
0 \to M^t \to N^t \to L^t \to 0.
\] The long exact sequence in homology reads \[\ldots \to
\frac{\Ker t^n_L}{tL} \to \frac{\Ker t_M}{t^nM}\to \frac{\Ker t_N}{t^nN} \to \frac{\Ker t_L}{t^nL} \to \frac{M}{tM}\to  \frac{N}{tN}\to  \frac{L}{tL} \to 0.
\] Since everything is functorial, we have proven that our candidate derived functors give rise to a $\delta$-functor. To show that it is universal, we use the fact that it is coeffaceable: indeed, if an $A_n$-module $M$ is $R_n$-free then $\frac{\Ker t_M}{t^nM}=\frac{\Ker t^n_M}{tM}=0$ and, as shown in the proof of \cite[Theorem 4.2.1]{Positselski_2018}, any cdg $A_n$-module admits a surjection from an $R_n$-free module.
\end{proof}
Since $L^iQ(F)=0$ for all $R_n$-free modules $F$ and $i>0$, to compute $L^i(M)$ it is enough to take an $R_n$-free resolution \[
\ldots\to F_2 \to F_1 \to M \to 0,
\]apply the functor $Q$ term-wise and read the functor $L^iQ(M)$ as the horizontal cohomology of the complex of $A$-modules \[
\ldots \to Q(F_2) \to Q(F_1) \to 0 .
\]
\subsubsection*{Deriving the right adjoint}
denote by $K$ the functor $\Ker t\colon A_n\Mod\to A\Mod$. The functor $K$ is left exact, and the arguments of the previous sections dualize to show that $K$ admits right derived functors.
\begin{proposition}\label{dualderived}
    The functor $K$ admits right derived functors $R^iK$, and one has \[R^iK(M)=\begin{cases} K(M) \operatorname{ for } i=0,\\ \\
         \displaystyle R^iK(M)= \frac{\Ker t^n_M}{tM} \;\operatorname{ for } i \operatorname{ odd},\\
         \\
         \displaystyle \frac{\Ker t_M}{t^nM} \;\operatorname{ for } i>0 \operatorname{ and \, even.} 
    \end{cases}
    \] 
\end{proposition}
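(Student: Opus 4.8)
The plan is to run the dual of the argument just given for $Q = \Coker t$. For an $A_n$-module $M$ introduce the cochain complex of $A_n$-modules
\[
M^t:\quad 0 \to M \tow{t} M \tow{t^n} M \tow{t} M \tow{t^n} \cdots,
\]
concentrated in non-negative degrees with $M$ placed in degree $0$. A direct computation of the cohomology gives $H^0(M^t) = \Ker t_M = K(M)$, $H^i(M^t) = \Ker t^n_M / tM$ for $i$ odd, and $H^i(M^t) = \Ker t_M / t^n M$ for $i>0$ even, which are exactly the asserted formulas. First I would carry out this cohomology computation and observe that $M \mapsto M^t$ is functorial in $M$.

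Next, given a short exact sequence $0 \to M \to N \to L \to 0$ of $A_n$-modules, the maps are $A_n$-linear and hence commute with multiplication by $t$ and by $t^n$; applying $(-)^t$ degreewise yields a short exact sequence of complexes $0 \to M^t \to N^t \to L^t \to 0$, whose long exact sequence in cohomology
\[
0 \to \Ker t_M \to \Ker t_N \to \Ker t_L \to \frac{\Ker t^n_M}{tM} \to \frac{\Ker t^n_N}{tN} \to \frac{\Ker t^n_L}{tL} \to \frac{\Ker t_M}{t^nM} \to \cdots
\]
is natural in the sequence. This exhibits the collection $R^*K$, together with the connecting maps, as a cohomological $\delta$-functor extending $K$.

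It then remains to see this $\delta$-functor is universal, for which I would verify that $K$ is effaceable. If $F$ is $R_n$-free then, exactly as in the case of $Q$, $\Ker t^n_F / tF = 0$ and $\Ker t_F / t^n F = 0$, so $R^iK(F) = 0$ for all $i > 0$; moreover every $A_n$-module admits an $A_n$-linear injection into an $R_n$-free module, since over $R_n$ the free and cofree modules coincide — this is precisely the input recorded above in the discussion of free resolutions. Hence $K$ is effaceable, $R^*K$ is the universal $\delta$-functor extending $K$, and by the usual dimension-shifting argument it can be computed from an $R_n$-free coresolution $0 \to M \to F^1 \to F^2 \to \cdots$ by applying $K$ degreewise and taking cohomology of $0 \to K(F^1) \to K(F^2) \to \cdots$.

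The argument is essentially formal once the statement for $Q$ is in hand; the one genuine point requiring care is the self-duality of the artinian ring $R_n$, namely that projective and injective $R_n$-modules agree, since this is what makes ``enough $R_n$-free injectives'' available and lets the previous proof dualize verbatim. A minor bookkeeping point is the indexing of $M^t$: the alternating pattern of maps must begin with $t$ rather than $t^n$, so that $\Ker t_M$ (and not $\Ker t^n_M$) occupies degree $0$.
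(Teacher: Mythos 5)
Your proposal is correct and follows the same route the paper intends: the paper's own justification is literally that "the arguments of the previous sections dualize," and you have carried out exactly that dualization — the cochain complex $M^t$ with maps alternating $t, t^n, t, t^n,\dots$ starting with $t$, the long exact sequence giving the $\delta$-functor structure, and effaceability via the vanishing of the higher terms on $R_n$-free modules together with the existence of $A_n$-linear embeddings into $R_n$-free modules (which rests on the self-injectivity of $R_n$, as recorded in the paper's discussion of free resolutions). Your cohomology computation and the indexing remark are both accurate, so nothing is missing.
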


Peculiarly, it turns out that the right derived functors of $K$ coincide up to a shift in periodicity with the left derived functors of $Q$. 

 We can now prove the main result of this section.
 \begin{proposition}\label{nacysacy}
     Any $n$-acyclic $A_n$-module is semiacyclic both as a comodule and as a contramodule.
 \end{proposition}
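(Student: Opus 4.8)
\emph{Proof plan.} The plan is to reduce, by means of Positselski's free and cofree resolutions, to the case of an $R_n$-free module, and then to identify the reduction modulo $t$ with the derived functors $L^\bullet Q$ and $R^\bullet K$ computed above.

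For the contramodule statement, recall from the free resolution that an arbitrary $A_n$-module $M$ fits into a triangle $M^{\operatorname{fr}} \to M \to C \to M^{\operatorname{fr}}[1]$ in $\hot(A_n)$ with $C$ contraacyclic and $M^{\operatorname{fr}} = \Tot^\Pi(\ldots \to F_2 \to F_1)$, where $\ldots \to F_2 \to F_1 \to M \to 0$ is an exact sequence of $A_n$-modules with all $F_i$ being $R_n$-free; in particular $M^{\operatorname{fr}}$ is $R_n$-free. Since the thick subcategory of modules semiacyclic as contramodules contains every contraacyclic module, it contains $C$, so by the triangle it suffices to prove that $M^{\operatorname{fr}}$ is a semiacyclic $R_n$-free module, i.e. that the complex $M^{\operatorname{fr}}/tM^{\operatorname{fr}}$ is acyclic. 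Dually, the cofree resolution yields a triangle $B \to M \to M^{\operatorname{cofr}} \to B[1]$ with $B$ coacyclic and $M^{\operatorname{cofr}} = \Tot^\oplus(0 \to F'_1 \to F'_2 \to \ldots)$ for an exact coresolution of $M$ by $R_n$-free modules, and the comodule statement reduces to the acyclicity of $M^{\operatorname{cofr}}/tM^{\operatorname{cofr}}$, equivalently of $\Ker t_{M^{\operatorname{cofr}}}$ (these agree since $M^{\operatorname{cofr}}$ is $R_n$-free).

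Next I would compute these reductions. Since $Q = \Coker t$ commutes with $\Tot^\Pi$ and $K = \Ker t$ commutes with $\Tot^\oplus$, one has $M^{\operatorname{fr}}/tM^{\operatorname{fr}} = \Tot^\Pi(\ldots \to Q(F_2) \to Q(F_1))$ and $\Ker t_{M^{\operatorname{cofr}}} = \Tot^\oplus(0 \to K(F'_1) \to K(F'_2) \to \ldots)$. As $R_n$-free modules are adapted both to $Q$ and to $K$, the cohomology objects, in the resolution direction, of these complexes of dg $A$-modules are precisely $Q(M) = L^0Q(M)$ together with the $L^iQ(M)$, $i \geq 1$, in the first case, and $K(M) = R^0K(M)$ together with the $R^iK(M)$, $i \geq 1$, in the second. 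Now if $M$ is $n$-acyclic each of these is an acyclic dg $A$-module: $Q(M) = \Gr_t^0(M)$ and $K(M) = \Gr_K^0(M)$ are acyclic by definition, and by the explicit formulas the higher derived functors are, up to periodicity, the complexes $\Ker t_M/t^nM$ and $\Ker t^n_M/tM$; the former is acyclic because $\Ker t_M = \Gr_K^0(M)$ and $t^nM = \Gr_t^n(M)$ are acyclic and $t^nM \subseteq \Ker t_M$, and the latter because $\Ker t^n_M$ is $(n-1)$-acyclic, hence acyclic, by Corollary \ref{nacyacy}, the module $tM$ is acyclic (it carries the finite filtration by the $t^jM$, $j \geq 1$, with acyclic quotients), and $tM \subseteq \Ker t^n_M$.

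Finally, it remains to observe that a totalisation of a complex of dg $A$-modules which is bounded on one side and whose cohomology objects in the complex-of-complexes direction are all acyclic is itself acyclic. For the bounded-below complex $0 \to K(F'_1) \to K(F'_2) \to \ldots$ this is immediate, a bounded-below complex of objects of $D(A)$ with zero cohomology objects being zero (build it up from its canonical truncations from above, which are bounded and have zero cohomology, hence zero, and pass to the homotopy colimit). For the bounded-above complex $\ldots \to Q(F_2) \to Q(F_1)$ one realises $\Tot^\Pi$ as the homotopy limit of the totalisations of its canonical truncations $\tau_{\geq -m}$, each of which is bounded with zero cohomology, hence zero. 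Thus $M^{\operatorname{fr}}/tM^{\operatorname{fr}}$ and $\Ker t_{M^{\operatorname{cofr}}}$ are acyclic, so $M^{\operatorname{fr}}$ and $M^{\operatorname{cofr}}$ are semiacyclic $R_n$-free modules, and the two triangles show that $M$ is semiacyclic as a comodule and as a contramodule. The single delicate point is this passage to the limit for the product totalisation; it is of the same nature as \cite[Lemma 4.2.2]{Positselski_2018} and can be handled in the same way.
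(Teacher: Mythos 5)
Your proof follows the paper's route exactly: reduce via the free and cofree resolution triangles to showing that $Q(M^{\operatorname{fr}})$ and $\Ker t_{M^{\operatorname{cofr}}}$ are acyclic, identify the cohomology of these totalizations in the resolution direction with the derived functors $L^iQ(M)$ and $R^iK(M)$, and conclude from their acyclicity. The paper handles the convergence of the product totalization by citing \cite[Proposition 6.2]{EILENBERG19621}; your truncation/limit argument is an acceptable substitute, and you rightly flag it as the delicate point. You also spell out the comodule half, which the paper dispatches with ``the argument is dual.''

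One step, however, is justified by an argument that does not parse in the curved case. To show that $L^iQ(M)=\Ker t^n_M/tM$ ($i>0$ even) is acyclic, you assert that $\Ker t^n_M$ is ``$(n-1)$-acyclic, hence acyclic,'' that $tM$ is acyclic via its finite filtration, and that the quotient of acyclic by acyclic is acyclic. But when $c\neq 0$ neither $\Ker t^n_M$ nor $tM$ is a complex: $d^2=c\cdot(-)$ lands in $t\Ker t^n_M$, resp.\ in $t^2M$, but need not vanish. So ``acyclic'' is undefined for these submodules and the two-out-of-three argument cannot be run on them; only the quotient $\Ker t^n_M/tM$ itself is a complex (because $cM\subseteq tM$). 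The repair is the short exact sequence of genuine complexes that the paper establishes as \eqref{ses1} with $i=n$, namely
\[
0 \to t^nM \to \frac{\Ker t^n_M}{t\Ker t^n_M} \to \frac{\Ker t^n_M}{tM} \to 0,
\]
in which $t^nM=\Gr^n_t(M)$ is acyclic by the $n$-acyclicity of $M$ and the middle term is $\Gr^0_t(\Ker t^n_M)$, acyclic because $\Ker t^n_M$ is $(n-1)$-acyclic by Corollary \ref{nacyacy}. Your treatment of the odd case $\Ker t_M/t^nM$ has the same surface form but is in fact fine, since $\Ker t_M=\Gr^0_K(M)$ and $t^nM=\Gr^n_t(M)$ genuinely are complexes; it is only the intermediate submodules $\Ker t^n_M$ and $tM$ that are not. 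With this one substitution the proof is complete and agrees with the paper's.
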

\begin{proof}

Assume that $M$ is an $n$-acyclic $A_n$-module, and consider the triangle \eqref{freeres}. Since $C$ is contraacyclic it is in particular semiacyclic as a contramodule, and in order to prove that $M$ is semiacyclic it will be enough to prove that \[
M^{\operatorname{fr}}=\Tot^\Pi(\ldots \to F_2 \to F_1 \to 0)
\] is semiacyclic. This, since $M^{\operatorname{fr}}$ is $R_n$-free, is equivalent to $Q(M^{\operatorname{fr}})$ being acyclic. Clearly \[
Q(M^{\operatorname{fr}})\cong \Tot^\Pi(\ldots \to Q(F_2) \to Q(F_1) \to 0)
\] and by \cite[Proposition 6.2]{EILENBERG19621} we can compute the cohomology of the total complex by taking first horizontal and then vertical cohomology. The horizontal cohomology computes the derived functors $L^iQ(M)$, so the first page reads
\[\begin{tikzcd}
	\ldots & {L^2Q(M)} & {L^1Q(M)} & {L^0Q(M).}
\end{tikzcd}\]
Since $M$ is $n$-acyclic we know that $L^0Q(M)\cong Q(M)$ is acyclic; similarly, it is straightforward to see that since $M$ is $n$-acyclic $L^iQ(M)$ is also acyclic for $i>0$ so we are done. The argument for semiacyclicity as a comodule is dual.
\end{proof}
\begin{remark}
    If $A_n$ has no curvature one can give a different proof of this fact, at least for semiacyclicity as a contramodule. First of all observe that, since $M$ is $n$-acyclic, it is enough to prove that $Q(C)$ is acyclic to conclude that $Q(M^{\operatorname{fr}})$ is acyclic. Then, consider the dg $A_n$-module
\[
L_n=\Tot^\oplus(0 \to A_n \tow{t} A_n \tow{t^n}A_n \tow{t} A_n \to\ldots ).
\] The module $L_n$ is $A_n$-free, so by \cite[Theorem 3.5.1]{Positselski_2011} since $C$ is contraacyclic we know that $\Hm{A_n}(L_n, C)$ is acyclic. Explicitly, we have 
\[
\Hm{A_n}(L_n, M)\cong \Tot^\Pi(\ldots \to C \tow{t} C \tow{t^n} C \tow{t} C \to 0)
\] and we can again compute its cohomology by taking first horizontal and then vertical cohomology. The first page is
\[ \begin{tikzcd}
	{\displaystyle \frac{\Ker t_C}{t^nC}} & {\displaystyle\frac{\Ker t^n_C}{tC}} & {\displaystyle\frac{\Ker t_C}{t^nC}} & {\displaystyle\frac{C}{tC}}
\end{tikzcd}\]
with the natural vertical differentials. Since $M$ is $n$-acyclic and $M^{\operatorname{fr}}$ is $R_n$-free one has that $\frac{\Ker t_C}{t^nC}$ and $\frac{\Ker t^n_C}{tC}$ are acyclic; the second page then reads 
\[\begin{tikzcd}
	0 & 0 & 0 & {\displaystyle H^\bullet\frac{C}{tC}}
\end{tikzcd}\] so $\frac{C}{tC}$ has to be acyclic. This proof does not generalize well to the curved case due to the difficulty of finding explicit noncontractible $A_n$-free cdg modules.
\end{remark}
We have then proven the following:

\begin{corollary}
    The semiderived category $\dsi(A_n)$ is a quotient of the $n$-derived category $D^n(A_n)$.
\end{corollary}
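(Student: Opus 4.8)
The plan is to deduce the corollary formally from Proposition \ref{nacysacy}, using the standard behaviour of Verdier quotients under nested thick subcategories. Write $\operatorname{Ac}^n\subseteq\hot(A_n)$ for the subcategory of $n$-acyclic modules: it is triangulated and closed under coproducts by Proposition \ref{prodcoprod}, hence thick (a localizing subcategory is closed under retracts), and $D^n(A_n)=\hot(A_n)/\operatorname{Ac}^n$ by definition. On the other side, by definition $\dsi(A_n^{\operatorname{co}})=\hot(A_n)/\mathcal{N}^{\operatorname{co}}$, where $\mathcal{N}^{\operatorname{co}}\subseteq\hot(A_n)$ is the thick subcategory of modules semiacyclic as comodules, and $\dsi(A_n^{\operatorname{co}})\simeq\dsi(A_n)$ by \cite[Theorem 4.2.1]{Positselski_2018}. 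Proposition \ref{nacysacy} is exactly the inclusion $\operatorname{Ac}^n\subseteq\mathcal{N}^{\operatorname{co}}$.

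First I would note that the quotient functor $\hot(A_n)\to\dsi(A_n^{\operatorname{co}})$ kills $\operatorname{Ac}^n$, so by the universal property of the localization $\hot(A_n)\to D^n(A_n)$ it factors uniquely through a triangulated functor $\Phi\colon D^n(A_n)\to\dsi(A_n^{\operatorname{co}})$. Then I would identify $\Phi$ as a Verdier quotient: letting $\overline{\mathcal{N}^{\operatorname{co}}}\subseteq D^n(A_n)$ be the essential image of $\mathcal{N}^{\operatorname{co}}$ under $\hot(A_n)\to D^n(A_n)$ --- a thick subcategory, since it contains the kernel $\operatorname{Ac}^n$ of that quotient --- the usual description of iterated Verdier quotients (see e.g.\ \cite{krause2009localization}) gives a canonical equivalence
\[
D^n(A_n)\big/\overline{\mathcal{N}^{\operatorname{co}}}\;\xrightarrow{\ \sim\ }\;\hot(A_n)/\mathcal{N}^{\operatorname{co}}\;=\;\dsi(A_n^{\operatorname{co}})\;\simeq\;\dsi(A_n)
\]
through which $\Phi$ factors as the quotient functor followed by this equivalence. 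This exhibits $\dsi(A_n)$ as a Verdier quotient of $D^n(A_n)$, as claimed; one may run the same argument with $\dsi(A_n^{\operatorname{contra}})$ in place of $\dsi(A_n^{\operatorname{co}})$, using the contramodule half of Proposition \ref{nacysacy}.

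I do not expect a genuine obstacle here: all the content of the corollary sits in Proposition \ref{nacysacy}, and what remains is the formal machinery of localizing triangulated categories. The only minor points to be careful about are that $\operatorname{Ac}^n$ is honestly thick (automatic, as noted) and that the image in $D^n(A_n)$ of a thick subcategory containing the localization kernel is again thick, both of which are part of the standard package. If one wants to be fully explicit, $\overline{\mathcal{N}^{\operatorname{co}}}$ can be described as the subcategory of $D^n(A_n)$ generated under shifts, cones and retracts by the images of the $R_n$-free semiacyclic modules and of the coacyclic modules, but this is not needed for the statement.
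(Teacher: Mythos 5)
Your proposal is correct and is essentially the paper's own argument: the paper presents this corollary as an immediate consequence of Proposition \ref{nacysacy} (``We have then proven the following''), with the Verdier-quotient formalism for nested thick subcategories left implicit, and the subsequent remark describes exactly your $\overline{\mathcal{N}^{\operatorname{co}}}$ as the closure of the (co/contra)acyclic modules under filtered quasi-isomorphisms. You have simply spelled out the standard localization bookkeeping that the paper takes for granted.
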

    Explicitly, the semiderived category $\dsi(A_n^{\operatorname{contra}})$ is the quotient of $D^n(A_n)$ by (the closure under filtered quasi-isomorphisms of) the contraacyclic modules; similarly the semiderived category $\dsi(A_n^{\operatorname{co}})$ is the quotient of $D^n(A_n)$ by the closure of the coacyclic modules.
\begin{corollary}\label{corsemider}
    The quotient functor $D^n(A_n) \to \dsi(A_n^{\operatorname{contra}})$ admits a left adjoint $\mathbf{fr}$, defined by assigning to a module $M$ its free resolution $M^{\operatorname{fr}}$. Dually, the quotient functor $D^n(A_n) \to \dsi(A_n^{\operatorname{co}})$ admits a right adjoint $\mathbf{cf}$, defined by assigning to a module $M$ its cofree resolution $M^{\operatorname{cofr}}$.
\end{corollary}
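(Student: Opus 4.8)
The plan is to deduce the statement formally from Proposition~\ref{krause}, Proposition~\ref{nacysacy}, Lemma~\ref{Rnfree}, and the existence of the free/cofree resolutions; I describe the contravariant (free) half in detail, the covariant (cofree) half being entirely dual.

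First I would set up the framework. By Proposition~\ref{nacysacy}, $\operatorname{Ac}^n$ is contained in the subcategory $\mathcal{S}$ of modules that are semiacyclic as contramodules, so $q\colon D^n(A_n)\to\dsi(A_n^{\operatorname{contra}})$ is the Verdier quotient of $D^n(A_n)$ by the image $\bar{\mathcal S}$ of $\mathcal{S}$; since the $R_n$-free semiacyclic generators of $\mathcal{S}$ are $n$-acyclic by Lemma~\ref{Rnfree}, they vanish in $D^n(A_n)$, so $\bar{\mathcal S}$ is the thick subcategory of $D^n(A_n)$ generated by the (images of the) contraacyclic modules. By Proposition~\ref{krause} applied with $\mathcal{T}=D^n(A_n)$ and $\cS=\bar{\mathcal S}$, the functor $q$ has a fully faithful left adjoint as soon as every $M$ fits into a triangle $P\to M\to S\to P[1]$ with $S\in\bar{\mathcal S}$ and $P\in{}^{\perp}\bar{\mathcal S}$, and that adjoint then sends $M$ to $P$. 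I take the free-resolution triangle~\eqref{freeres}, namely $M^{\operatorname{fr}}\to M\to C\to M^{\operatorname{fr}}[1]$: here $C$ is contraacyclic, hence $C\in\bar{\mathcal S}$, and the whole problem reduces to showing $M^{\operatorname{fr}}\in{}^{\perp}\bar{\mathcal S}$.

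To prove $M^{\operatorname{fr}}\in{}^{\perp}\bar{\mathcal S}$, observe that the full subcategory $\{X : \Hm{D^n(A_n)}(M^{\operatorname{fr}},X[\ell])=0 \text{ for all }\ell\}$ is thick, so it suffices to check $\Hm{D^n(A_n)}(M^{\operatorname{fr}},C')=0$ for every contraacyclic $C'$. For this I would first invoke Positselski: the free-resolution construction of \cite{Positselski_2018} exhibits $M\mapsto M^{\operatorname{fr}}$ as a fully faithful left adjoint to $\hot(A_n)\to\dsi(A_n^{\operatorname{contra}})$, so $M^{\operatorname{fr}}$ is left orthogonal to $\mathcal{S}$ already inside $\hot(A_n)$. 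I then transfer this orthogonality across the localization $\hot(A_n)\to D^n(A_n)$: a morphism $M^{\operatorname{fr}}\to N$ in $D^n(A_n)$ with $N\in\mathcal{S}$ is represented by a fraction $M^{\operatorname{fr}}\to N'\leftarrow N$ whose backward arrow has $n$-acyclic cone; by Proposition~\ref{nacysacy} that cone lies in $\mathcal{S}$, hence so does $N'$, and therefore the forward arrow $M^{\operatorname{fr}}\to N'$ already vanishes in $\hot(A_n)$, so the fraction is zero. This gives $M^{\operatorname{fr}}\in{}^{\perp}\bar{\mathcal S}$, and hence the left adjoint $\mathbf{fr}$, which by the construction in Proposition~\ref{krause} is $M\mapsto M^{\operatorname{fr}}$. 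The statement about $\mathbf{cf}$ is obtained by running the dual argument with the cofree-resolution triangle $B\to M\to M^{\operatorname{cofr}}\to B[1]$ ($B$ coacyclic): use the dual form of Proposition~\ref{krause}, the inclusion of $\operatorname{Ac}^n$ into the modules semiacyclic as comodules (again Proposition~\ref{nacysacy}) together with Lemma~\ref{Rnfree} to identify the kernel of $D^n(A_n)\to\dsi(A_n^{\operatorname{co}})$ with the thick subcategory generated by the coacyclic modules, the fact that the cofree resolution realises a right adjoint to $\hot(A_n)\to\dsi(A_n^{\operatorname{co}})$, and the same transfer-by-fractions argument.

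I expect the only delicate point to be this transfer step across the two localizations. It is tempting to compare $\Hm{D^n(A_n)}(M^{\operatorname{fr}},-)$ directly with $\Hm{\hot(A_n)}(M^{\operatorname{fr}},-)$, but $M^{\operatorname{fr}}$ is merely $R_n$-free and in general \emph{not} $n$-homotopy projective (already $R_n$ over $R_n$ fails this for $n\ge 1$), so that comparison map need not be an isomorphism; what makes the fraction argument work is exactly the containment $\operatorname{Ac}^n\subseteq\mathcal{S}$ (resp. inside the comodule-semiacyclics) furnished by Proposition~\ref{nacysacy}, which ensures that enlarging $N$ to $N'$ by an $n$-acyclic cone keeps us inside $\mathcal{S}$. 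Granting this, and quoting the adjointness of the free and cofree resolutions from \cite{Positselski_2018}, the corollary is a formal consequence of the results already in place.
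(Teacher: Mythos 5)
Your overall architecture --- identifying the kernel of $D^n(A_n)\to\dsi(A_n^{\operatorname{contra}})$ with the thick closure of the (images of the) contraacyclics, feeding the triangle \eqref{freeres} into Proposition \ref{krause}, and reducing everything to $M^{\operatorname{fr}}\in{}^{\perp}\bar{\mathcal S}$ in $D^n(A_n)$ --- is exactly the paper's, and your roof argument is the paper's roof argument. The gap is in the step where you kill the forward arrow of the roof. You assert that the free resolution of \cite{Positselski_2018} exhibits $M\mapsto M^{\operatorname{fr}}$ as a left adjoint to $\hot(A_n)\to\dsi(A_n^{\operatorname{contra}})$, so that $M^{\operatorname{fr}}$ is left orthogonal to the contramodule-semiacyclics \emph{already in $\hot(A_n)$}, and you conclude that $M^{\operatorname{fr}}\to N'$ ``already vanishes in $\hot(A_n)$''. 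This is false. If every $R_n$-free module were left orthogonal in $\hot(A_n)$ to every module semiacyclic as a contramodule, then for any $R_n$-free semiacyclic $S$ (which lies in that class by definition, and for which one may take $S^{\operatorname{fr}}=S$) one would get $\Hm{\hot(A_n)}(S,S)=0$, i.e.\ every $R_n$-free semiacyclic module would be contractible; already for an uncurved deformation, $S=P\otimes_kR_n$ with $P$ an acyclic non-contractible dg $A$-module is a counterexample. What \cite[Theorem 4.2.1]{Positselski_2018} gives is the equivalence $\dsi(A_n^{R_n\operatorname{-fr}})\simeq\dsi(A_n^{\operatorname{contra}})$, not an adjoint at the level of $\hot(A_n)$; the content of the corollary is precisely that the adjoint only materialises after passing to $D^n(A_n)$.

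The correct replacement --- and this is what the paper extracts from the proof of \cite[Theorem 4.2.1]{Positselski_2018} --- is weaker but sufficient: any \emph{closed} morphism from an $R_n$-free module to a module semiacyclic as a contramodule factors in $Z^0A_n\Mod$ through an $R_n$-free semiacyclic, hence by Lemma \ref{Rnfree} $n$-acyclic, module. One checks that the class $\F$ of targets with this factorization property is closed under cones, contains the contraacyclics (a closed morphism from an $R_n$-free module into a contraacyclic module factors through an $R_n$-free contraacyclic, which is $n$-acyclic), and trivially contains the $R_n$-free semiacyclics. The forward arrow of your roof therefore vanishes in $D^n(A_n)$ --- not in $\hot(A_n)$ --- which is all that is needed to conclude the fraction is zero. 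Your closing remark that $M^{\operatorname{fr}}$ is not $n$-homotopy projective is exactly the right instinct; the same caution invalidates the orthogonality-in-$\hot(A_n)$ claim on which your final step rests.
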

\begin{proof}
We prove the first statement, the second being dual. The claim will follow once we prove that $\Hm{D^n(A_n)}(F,S)=0$ as soon as $F$  is $R_n$-free and $S$ is semiacyclic as a contramodule, since at that point we will be able to use the triangle \eqref{freeres} to apply Proposition \ref{krause}. Any morphism $F \to S$ in $D^n(A_n)$ can be represented as a roof $F \to M \overset{\sim}{\leftarrow} S$ where both arrows are morphisms in $\hot(A_n)$ and $S\to M$ has $n$-acyclic cone; since $S$ is semiacyclic as a contramodule, by Proposition \ref{nacysacy} so is $M$. We will then be done if we prove that any closed morphism between an $R_n$-free $A_n$-module and a module which is semiacyclic as a contramodule factors through an $n$-acyclic module.
The proof of this fact is essentially contained in \cite{Positselski_2018}; indeed by the discussion in the proof of \cite[Theorem 4.2.1]{Positselski_2018} the class $\F$ of modules $M$ for  which any closed morphism $F\to M$ from an $R_n$-free module factors through an $R_n$-free $n$-acyclic module is closed under cones; it is also shown there that any morphism $F \to C$ where $C$ is contraacyclic factors through an $R_n$-free contraacyclic module, and since any $R_n$-free contraacyclic module is $n$-acyclic, all contraacyclic modules lie in $\F$. Finally, all $R_n$-free $n$-acyclic modules lie in $\F$ - just take the trivial factorization $F \to N \tow{\id} N$ - so the class $\F$ contains all the modules that are semiacyclic as contramodules and we are done.
\end{proof}
Recall the functor $\varphi \colon \dsi(A_n^{R_n\operatorname{-fr}}) \to D^n(A_n)$ defined after Lemma \ref{Rnfree}.
\begin{corollary}\label{corsemi}
       The functor $\varphi$ coincides both with the composition \[
       \dsi(A_n^{R_n\operatorname{-fr}}) \tow{\sim} \dsi(A_n^{\operatorname{contra}}) \tow{\mathbf{fr}} D^n(A_n)
       \] as well as with the composition \[
       \dsi(A_n^{R_n\operatorname{-fr}}) \tow{\sim} \dsi(A_n^{\operatorname{co}}) \tow{\mathbf{cf}} D^n(A_n).
       \]In particular, it is fully faithful and admits both a left and a right adjoint.
\end{corollary}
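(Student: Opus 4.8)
The plan is to identify $\varphi$ with $\mathbf{fr}\circ\theta_{\operatorname{contra}}$ and with $\mathbf{cf}\circ\theta_{\operatorname{co}}$, where $\theta_{\operatorname{contra}}\colon\dsi(A_n^{R_n\operatorname{-fr}})\xrightarrow{\sim}\dsi(A_n^{\operatorname{contra}})$ and $\theta_{\operatorname{co}}\colon\dsi(A_n^{R_n\operatorname{-fr}})\xrightarrow{\sim}\dsi(A_n^{\operatorname{co}})$ are the equivalences of \cite[Theorem 4.2.1]{Positselski_2018}; once this is established, full faithfulness of $\varphi$ and the existence of its two adjoints are purely formal.

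First I would introduce the two auxiliary quotient functors this requires. By Proposition \ref{nacysacy} every $n$-acyclic module is semiacyclic both as a comodule and as a contramodule, so the universal property of the Verdier quotient $\hot(A_n)\to D^n(A_n)$ produces functors $q_{\operatorname{co}}\colon D^n(A_n)\to\dsi(A_n^{\operatorname{co}})$ and $q_{\operatorname{contra}}\colon D^n(A_n)\to\dsi(A_n^{\operatorname{contra}})$ which exhibit $\dsi(A_n^{\operatorname{co}})$ and $\dsi(A_n^{\operatorname{contra}})$ as Verdier quotients of $D^n(A_n)$ and are compatible with the quotient functors out of $\hot(A_n)$. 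These $q$'s are precisely the quotient functors appearing in Corollary \ref{corsemider}, so $\mathbf{fr}$ is a left adjoint of $q_{\operatorname{contra}}$ and $\mathbf{cf}$ a right adjoint of $q_{\operatorname{co}}$; by Proposition \ref{krause} and its dual, $\mathbf{fr}$ is then fully faithful with essential image $\prescript{\perp}{}{\Ker q_{\operatorname{contra}}}$ and $\mathbf{cf}$ is fully faithful with essential image $(\Ker q_{\operatorname{co}})^{\perp}$. Moreover, since $\varphi$, $q_{\operatorname{contra}}$, $q_{\operatorname{co}}$, $\theta_{\operatorname{contra}}$ and $\theta_{\operatorname{co}}$ are all the canonical functors induced by the inclusion $\hot(A_n^{R_n\operatorname{-fr}})\hookrightarrow\hot(A_n)$ followed by the appropriate Verdier quotient, the uniqueness clause of the universal property gives $q_{\operatorname{contra}}\circ\varphi=\theta_{\operatorname{contra}}$ and $q_{\operatorname{co}}\circ\varphi=\theta_{\operatorname{co}}$.

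The substantive step is to show that the essential image of $\varphi$ — that is, the objects of $D^n(A_n)$ isomorphic to an $R_n$-free module — is contained both in $\prescript{\perp}{}{\Ker q_{\operatorname{contra}}}$ and in $(\Ker q_{\operatorname{co}})^{\perp}$. For the first inclusion, $\Ker q_{\operatorname{contra}}$ is the thick subcategory of $D^n(A_n)$ generated by the modules that are semiacyclic as contramodules, and the key computation carried out inside the proof of Corollary \ref{corsemider} is exactly the vanishing $\Hm{D^n(A_n)}(F,S)=0$ for $F$ an $R_n$-free module and $S$ semiacyclic as a contramodule; as the class $\{X:\Hm{D^n(A_n)}(F,X)=0\}$ is a thick subcategory, this vanishing propagates to all of $\Ker q_{\operatorname{contra}}$. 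The inclusion into $(\Ker q_{\operatorname{co}})^{\perp}$ is the dual statement, obtained from the dual of the argument in Corollary \ref{corsemider}. Hence for $X$ in the essential image of $\varphi$ both the counit $\mathbf{fr}\,q_{\operatorname{contra}}(X)\to X$ and the unit $X\to\mathbf{cf}\,q_{\operatorname{co}}(X)$ are isomorphisms, so $\mathbf{fr}\circ q_{\operatorname{contra}}\circ\varphi\cong\varphi\cong\mathbf{cf}\circ q_{\operatorname{co}}\circ\varphi$; combined with $q_{\operatorname{contra}}\circ\varphi=\theta_{\operatorname{contra}}$ and $q_{\operatorname{co}}\circ\varphi=\theta_{\operatorname{co}}$ from the previous paragraph, this yields $\varphi\cong\mathbf{fr}\circ\theta_{\operatorname{contra}}\cong\mathbf{cf}\circ\theta_{\operatorname{co}}$, which is the asserted description.

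The remaining conclusions are then immediate: $\varphi\cong\mathbf{fr}\circ\theta_{\operatorname{contra}}$ is fully faithful as a composite of the fully faithful functor $\mathbf{fr}$ with an equivalence; since $\mathbf{fr}$ is a left adjoint of $q_{\operatorname{contra}}$, the functor $\theta_{\operatorname{contra}}^{-1}\circ q_{\operatorname{contra}}$ is a right adjoint of $\varphi$; and since $\mathbf{cf}$ is a right adjoint of $q_{\operatorname{co}}$, the functor $\theta_{\operatorname{co}}^{-1}\circ q_{\operatorname{co}}$ is a left adjoint of $\varphi$. I expect the only point requiring genuine care to be the orthogonality bookkeeping in the third paragraph — correctly matching the vanishing proved in Corollary \ref{corsemider} against the thick kernels $\Ker q_{\operatorname{contra}}$, $\Ker q_{\operatorname{co}}$ and against the essential images of $\mathbf{fr}$, $\mathbf{cf}$ — everything else being formal manipulation of Verdier quotients and adjunctions.
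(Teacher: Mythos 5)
Your argument is correct, but it is considerably more elaborate than the paper's, which disposes of the corollary in one sentence: since $\mathbf{fr}$ and $\mathbf{cf}$ are constructed explicitly by sending a module $M$ to its free resolution $M^{\operatorname{fr}}$ (resp.\ cofree resolution $M^{\operatorname{cofr}}$), and for an $R_n$-free module one may simply take $M^{\operatorname{fr}}=M^{\operatorname{cofr}}=M$, the two composites visibly agree with $\varphi$ on objects and morphisms. You instead run the abstract localization machinery: you identify the essential images of $\mathbf{fr}$ and $\mathbf{cf}$ as $\prescript{\perp}{}{\Ker q_{\operatorname{contra}}}$ and $(\Ker q_{\operatorname{co}})^{\perp}$ via Proposition \ref{krause}, place the image of $\varphi$ inside both orthogonals using the vanishing $\Hm{D^n(A_n)}(F,S)=0$ from the proof of Corollary \ref{corsemider}, and conclude that the relevant (co)units are isomorphisms there. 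Both routes are sound and both ultimately rest on the same prior input; yours has the merit of making explicit the compatibility $q_{\operatorname{contra}}\circ\varphi=\theta_{\operatorname{contra}}$ (and its co-analogue), which the paper leaves implicit, and of not depending on the particular construction of the adjoints, while the paper's observation is shorter and gives the identification on the nose rather than up to natural isomorphism. One small bookkeeping point in your version: the kernel of $q_{\operatorname{contra}}$ is the thick closure in $D^n(A_n)$ of the \emph{image} of the modules semiacyclic as contramodules (i.e.\ its saturation under isomorphisms and summands in the quotient), but since the class $\{X:\Hm{D^n(A_n)}(F,X)=0\}$ is thick, your propagation step handles this correctly.
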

\begin{proof}
    This follows from the fact that if $M$ is $R_n$-free, then one can take $M^{\operatorname{fr}}=M^{\operatorname{cofr}}=M$.
\end{proof}

In other words, $\dsi(A_n)$ identifies with the $R_n$-free part of $D^n(A_n)$. It's straightforward to see that the functors $L^iQ$ carry $n$-acyclic modules to acyclic modules, and thus define functors $D^n(A_n)\to D(A)$. It turns out that we can fully characterize the semiderived category in terms of the functors $L^iQ$. Let us assume for simplicity that $n=1$.
\begin{proposition}
    The subcategory $\dsi(A_1)\subseteq D^1(A_1)$ coincides with the kernel of the functor $L^1Q\colon D^1(A_1)\to D(A)$.
\end{proposition}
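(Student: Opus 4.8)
The plan is to identify, via Corollary \ref{corsemi}, the subcategory $\dsi(A_1)\subseteq D^1(A_1)$ with the essential image of $\varphi$, which consists exactly of the objects isomorphic in $D^1(A_1)$ to an $R_1$-free $A_1$-module, and then to prove the two inclusions separately. For the inclusion $\dsi(A_1)\subseteq \Ker(L^1Q)$: if $M$ is $R_1$-free then, as $R_1=\ke$ satisfies $t^2=0$, one has $\Ker t_M=tM$ on the nose, so $L^1Q(M)=\Ker t_M/tM=0$. Since $L^1Q$ descends to a functor $D^1(A_1)\to D(A)$ (it sends $1$-acyclic modules to acyclic complexes, as recalled just above the statement), every object of $\dsi(A_1)$ lies in the kernel.

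For the converse, take $M$ with $L^1Q(M)=\Ker t_M/tM$ acyclic, and observe that for $n=1$ all the higher derived functors $L^iQ(M)$, $i\geq 1$, coincide with $\Ker t_M/tM$, hence are acyclic. Now invoke Positselski's free resolution \eqref{freeres}: a triangle $M^{\operatorname{fr}}\to M\to C\to M^{\operatorname{fr}}[1]$ with $M^{\operatorname{fr}}$ an $R_1$-free module and $C=\Tot^\Pi(\ldots\to F_2\to F_1\to M\to 0)$ contraacyclic, the $F_i$ being $R_1$-free. It suffices to prove that $C$ is $1$-acyclic, for then $M^{\operatorname{fr}}\to M$ is an isomorphism in $D^1(A_1)$ and $M\cong M^{\operatorname{fr}}\in\dsi(A_1)$. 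Since the functors $\Ker t$ and $\Coker t$ commute with the products occurring in $\Tot^\Pi$, one has $\Ker t_C/tC=\Tot^\Pi(\ldots\to \Ker t_{F_1}/tF_1\to \Ker t_M/tM)$; the $F_i$-columns vanish because the $F_i$ are $R_1$-free, so $\Ker t_C/tC\cong L^1Q(M)$ is acyclic. Likewise $C/tC=\Tot^\Pi(\ldots\to Q(F_2)\to Q(F_1)\to Q(M)\to 0)$, and computing its cohomology by taking horizontal (i.e. resolution-direction) and then vertical cohomology, as in the proof of Proposition \ref{nacysacy} and using \cite[Proposition 6.2]{EILENBERG19621}, the horizontal cohomology is $0$ in the $M$-column (by right exactness of $Q$) and equals $L^iQ(M)$ in the $F_i$-column for $i\geq 1$, all of which are acyclic; hence $C/tC$ is acyclic. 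Finally, by the fundamental short exact sequence $0\to \Ker t_C/tC\to C/tC\overset{t}{\to}tC\to 0$ of \eqref{fundamentalses}, applied to $C$ with $i=n=1$, the complex $tC$ is acyclic as well, so $\Gr_t(C)$ is acyclic and $C$ is $1$-acyclic.

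The step I expect to be the main obstacle is the acyclicity of $C/tC$: since reduction modulo $t$ is only right exact, the exact complex $\ldots\to F_2\to F_1\to M$ does not reduce to an exact complex, so one really has to pass through the explicit product totalisation and control the convergence of its horizontal-then-vertical cohomology spectral sequence, exactly as in Proposition \ref{nacysacy}. What makes this go through is the degeneracy special to $n=1$, namely that $L^iQ(M)$ is acyclic for every $i\geq 1$ as soon as $L^1Q(M)$ is, and these are precisely the horizontal cohomologies entering the computation. Combining the two inclusions then gives $\dsi(A_1)=\Ker(L^1Q)$.
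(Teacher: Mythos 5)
Your proposal is correct and follows essentially the same route as the paper: identify $\dsi(A_1)$ with the objects of $D^1(A_1)$ isomorphic to $R_1$-free modules, note $\Ker t_M = tM$ for $R_1$-free $M$ for the easy inclusion, and for the converse run Positselski's free resolution through the product-totalization spectral sequence (using that all $L^iQ(M)$ coincide for $n=1$) and conclude with the short exact sequence $0\to \Ker t_C/tC\to C/tC\to tC\to 0$. The only cosmetic difference is that you establish the acyclicity of $\Ker t_C/tC$ by direct inspection of the totalization (the $F_i$-columns vanishing), where the paper deduces it from $L^1Q(M^{\operatorname{fr}})=0$; the substance is the same.
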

\begin{proof}
    We have already shown that the subcategory $\dsi(A_1)\subseteq D^n(A_1)$ coincides with the modules which, up to filtered quasi-isomorphism, are $R_1$-free. If $M\in A_1\Mod$ is $R_1$-free, it is clear that $L^1Q(M)=0$. Vice versa, assume that $L^1Q(M)$ is acyclic. Then, via the usual $R_1$-free resolution, we have an $R_1$-free module $M^{\operatorname{fr}}$ with a morphism $M^{\operatorname{fr}}\to M$ whose cone is isomorphic to \[
    C=\Tot^\Pi(\ldots \to F_2\to F_1 \to M).
    \] We have that \[
    Q(C)\cong \Tot^\Pi(\ldots \to Q(F_2)\to Q(F_1) \to Q(M);
    \] Since $L^1Q(M)$ is acyclic and $L^1Q(M)=L^iQ(M)$ for all $i>0$, by the same spectral sequence argument as in Proposition \ref{nacysacy}, $Q(C)$ is acyclic. Then, since $L^1Q(M^{\operatorname{fr}})=0$, we get that $L^1Q(C)=\frac{\Ker t_C}{tC}$ is acyclic. Finally, by the short exact sequence \[
    0 \to \frac{\Ker t_C}{tC} \to \frac{C}{tC}\to tC \to 0
    \] we get that $C$ is $1$-acyclic and $M^{\operatorname{fr}}\to M$ is an isomorphism in $D^1(A_1)$.
\end{proof}The case $n>1$ is analogous, with the semiderived category coinciding with the intersection of the kernels of the first derived functors of the reductions $\Coker t^i \colon A_n\Mod \to A_{i-1}\Mod$. We omit the proof. 
\begin{remark}
One might wonder whether the two subcategories $\dsi(A_n)$ and $D^{n-1}(A_{n-1})$ generate $D^n(A_n)$ in any way. This is indeed the case for uncurved deformations, but not in general. As is explained in \cite[Example 5.3.6]{Positselski_2018} the semiderived category of the deformation $k_u[u, u^{-1}]$ from Example \ref{gradedfield} is the zero category; on the other hand, we know from Theorem \ref{filtrationscat} that $D^1(k_u[u, u^{-1}])$ cannot coincide with its subcategory $\iota_{1}D(k[u, u^{-1}])$ since their quotient is nontrivial. In fact, one can prove that if the semiderived category is ``large enough'', i.e. there exists a module $M\in \dsi(A_n^{R_n\operatorname{-fr}})$ such that $Q(M)$ is a compact generator of $D(A)$, then the deformation $A_n$ is appropriately equivalent to an uncurved deformation (see \cite{LowenCurvature} for a similar argument). This perspective will be further developed in future work.
    
\end{remark}

\section{A Model structure}\label{parmodel}
In this section we show the existence of a cofibrantly generated model structure on $Z^0A_n\Mod$ presenting the $n$-derived category, which generalizes the classical projective model structure; just like the construction of the resolutions, this will pass through a relative version of the relevant classical construction.

Let $X=\{X_i\}_{i\in \Lambda}$ be a set of finitely presented cdg $A_n$-modules; since $X_i$ is finitely presented, $\Hm{A_n}(X_i, -)$ commutes with directed colimits. 
\begin{definition}
    A closed morphism $f\colon M \to N$ is an \emph{$X$-equivalence} if \[
    f_* \colon \Hm{A_n}(X_i, M) \to  \Hm{A_n}(X_i, N)
    \] is a quasi-isomorphism for all $i$. 
\end{definition} 
If $M$ is any cdg $A_n$-module, denote by $C_M$ the module $\operatorname{coCone}(\id_M)$, so that there is a natural closed morphism $M \to C_M$.
The same proof as \cite[Theorem 5.7]{relative_model} shows the following
\begin{proposition}\label{modelstructure}
    The category $Z^0A_n\Mod$ admits a cofibrantly generated model structure where the weak equivalences are the $X$-equivalences the fibrations are the morphisms $f\colon M \to N$ for which \[
    f_* \colon \Hm{A_n}(X_i, M) \to  \Hm{A_n}(X_i, N)
    \] is surjective for any $i$ and the cofibrations are the maps with the left lifting property with respect to the acyclic fibrations. The generating cofibrations and generating acyclic cofibrations are given by the sets
    \[I=\{X_i[l]\to C_{X_i}[l]\}_{i\in \Lambda,l\in \mathbb{Z}}
\quad\operatorname{ and }\quad
J=\{0 \to C_{X_i}[l]\}_{i\in \Lambda, l\in \mathbb{Z}}.
\]
\end{proposition}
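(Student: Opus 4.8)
The plan is to obtain this model structure from the standard recognition criterion for cofibrantly generated model categories (Kan's theorem), carried out exactly as in \cite[Theorem~5.7]{relative_model}. The ambient category $Z^0 A_n\Mod$ is bicomplete abelian, so one only needs to verify: that the class $W$ of $X$-equivalences is closed under retracts and satisfies two-out-of-three; that $I$ and $J$ permit the small object argument; that $J\text{-cell}\subseteq W\cap I\text{-cof}$; and that $I\text{-inj}\subseteq W\cap J\text{-inj}$ together with $W\cap J\text{-inj}\subseteq I\text{-inj}$. The first point is immediate, since $W$ is by definition the preimage of the class of quasi-isomorphisms of complexes of $R_n$-modules under the functors $\Hm{A_n}(X_i,-)$, and both properties pass through functors. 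For the small object argument, the domains appearing in $I$ are the shifts $X_i[l]$ and the domain in $J$ is $0$; since each $X_i$ is finitely presented, $\operatorname{Hom}_{Z^0A_n\Mod}(X_i,-)$ commutes with filtered colimits, so these domains are small.

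The technical heart is the explicit identification of the right lifting classes. One uses that $C_{X_i}=\operatorname{coCone}(\id_{X_i})$ sits in the triangle $C_{X_i}\to X_i\xrightarrow{\id}X_i$, hence is contractible, and that contravariance of $\Hm{A_n}(-,M)$ gives a natural isomorphism $\Hm{A_n}(C_{X_i},M)\cong \Cn(\id_{\Hm{A_n}(X_i,M)})$; passing to degreewise cocycles then produces a natural identification $\operatorname{Hom}_{Z^0A_n\Mod}(C_{X_i}[l],M)\cong \Hm{A_n}(X_i,M)^{-l}$. Unwinding the lifting conditions against this computation shows that $f\colon M\to N$ lies in $J\text{-inj}$ precisely when every $\Hm{A_n}(X_i,f)$ is surjective in each degree, that is, $f$ is a fibration in the stated sense, and in $I\text{-inj}$ precisely when, in addition, every $\Hm{A_n}(X_i,f)$ is a quasi-isomorphism, i.e.\ $f$ is an acyclic fibration. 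In particular $I\text{-inj}=W\cap J\text{-inj}$, which disposes of two of the recognition hypotheses at once. For $J\text{-cell}\subseteq W\cap I\text{-cof}$ I would observe that a pushout of $0\to C_{X_i}[l]$ along $0\to M$ is the split inclusion $M\to M\oplus C_{X_i}[l]$, which $\Hm{A_n}(X_j,-)$ sends to a split injective quasi-isomorphism since $\Hm{A_n}(X_j,C_{X_i}[l])$ is acyclic; a transfinite composition of such maps stays an $X$-equivalence because $\Hm{A_n}(X_j,-)$ commutes with the relevant filtered colimits (finite presentation again) and quasi-isomorphisms of complexes are stable under filtered colimits. And each $0\to C_{X_i}[l]$ lifts against any map that is degreewise surjective on $\Hm{A_n}(X_i,-)$, hence against all of $I\text{-inj}$, giving $J\text{-cell}\subseteq I\text{-cof}$. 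Kan's theorem then delivers the cofibrantly generated model structure, in which the cofibrations are by construction the maps with the left lifting property against the acyclic fibrations $I\text{-inj}$; specializing to $X=\{\Gamma_0,\dots,\Gamma_n\}$ and recalling that these are compact generators of $D^n(A_n)$, the $X$-equivalences coincide with the $n$-quasi-isomorphisms, so the structure presents $D^n(A_n)$.

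The step I expect to be the main obstacle is the middle one: getting the bookkeeping exactly right so that the two lifting classes are matched with fibrations and acyclic fibrations via the mapping complexes out of $C_{X_i}$, and ensuring that the smallness and filtered-colimit properties are genuinely in force. It is precisely here that finite presentation of the $X_i$ is essential, and the construction collapses without it; everything else is the standard Kan recognition machinery, carried out verbatim as in \cite{relative_model}.
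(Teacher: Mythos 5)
Your proposal is correct and is essentially the paper's argument: the paper simply asserts that the proof of \cite[Theorem 5.7]{relative_model} carries over verbatim, and what you have written out is precisely that Kan-recognition argument, including the key identification $\operatorname{Hom}_{Z^0A_n\Mod}(C_{X_i}[l],M)\cong \Hm{A_n}(X_i,M)^{-l}$ used to match $J\text{-inj}$ and $I\text{-inj}$ with the fibrations and acyclic fibrations, and the use of finite presentation of the $X_i$ for smallness and for commuting with the filtered colimits in the transfinite compositions.
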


Applying Proposition \ref{modelstructure} to the set $\{\Gamma_0, \ldots, \Gamma_n\}$ we obtain
\begin{proposition}\label{ourmodel}
    The category $Z^0A_n\Mod$ admits a cofibrantly generated model structure where the weak equivalences are the $n$-quasi-isomorphisms. The fibrations are the morphisms $f\colon M \to N$ such that the induced map $\Ker t^i_M \to \Ker t^i_N$ is surjective for all $i\in 1, \ldots, n+1$. The cofibrant objects are the retracts of $n$-semifree modules, and the cofibrations are the graded split morphism with cofibrant cokernel.
\end{proposition}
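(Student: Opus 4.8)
The statement is obtained by specializing Proposition \ref{modelstructure} to the set $X=\{\Gamma_0,\dots,\Gamma_n\}$ and then rendering the three resulting classes of maps explicit. First one checks that each $\Gamma_i$ is a finitely presented cdg $A_n$-module: its underlying graded module is $A_i^\#\oplus A_{i-1}^\#[1]$, and each $A_j^\#$ is a cyclic $A_n^\#$-module whose presentation $A_n^\#\xrightarrow{t^{j+1}}A_n^\#\to A_j^\#\to 0$ has finitely generated kernel $t^{\,n-j}A_n^\#$, so $A_j^\#$ is finitely presented. Hence Proposition \ref{modelstructure} applies verbatim, and the remaining work is to identify its weak equivalences, fibrations and (co)fibrant objects with the descriptions claimed.

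For the weak equivalences: a closed morphism $f\colon M\to N$ is an $X$-equivalence iff each $f_*\colon\Hm{A_n}(\Gamma_i,M)\to\Hm{A_n}(\Gamma_i,N)$ is a quasi-isomorphism. Since $\Hm{A_n}(\Gamma_i,\Cn f)\cong\Cn(f_*)$ as complexes, this holds iff $\Hm{A_n}(\Gamma_i,\Cn f)$ is acyclic for every $i$, i.e.\ iff $\Hm{\hot(A_n)}(\Gamma_i[l],\Cn f)=0$ for all $i$ and $l$. As the $\Gamma_i$ are $n$-homotopy projective, Proposition \ref{krause} identifies these groups with $\Hm{D^n(A_n)}(\Gamma_i[l],\Cn f)$, and since the $\Gamma_i$ generate $D^n(A_n)$ (Theorem \ref{propcompgen}), their simultaneous vanishing says exactly that $\Cn f$ is $n$-acyclic, i.e.\ that $f$ is an $n$-quasi-isomorphism. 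For the fibrations: by Proposition \ref{modelstructure} these are the $f$ for which every $f_*\colon\Hm{A_n}(\Gamma_i,M)\to\Hm{A_n}(\Gamma_i,N)$ is degreewise surjective. Using $\Gamma_i^\#=A_i^\#\oplus A_{i-1}^\#[1]$ together with $\Hm{A_n^\#}(A_j^\#,L^\#)\cong\Ker t^{j+1}_L$ (a graded map $A_j^\#\to L^\#$ is determined by the image of $1$, which must be killed by $t^{j+1}$), the underlying graded map of $f_*$ at the index $i$ is $\Ker t^{i+1}_f\oplus(\Ker t^i_f)[-1]$. Hence $f$ is a fibration iff $\Ker t^j_f$ is surjective for all $j\in\{0,1,\dots,n+1\}$, which since $\Ker t^0=0$ is the stated condition that $\Ker t^j_M\to\Ker t^j_N$ be surjective for $j=1,\dots,n+1$ (the case $j=n+1$ meaning $f$ itself is surjective).

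For the cofibrant objects and cofibrations: the cofibrant objects of a cofibrantly generated model structure are the retracts of cell complexes built from the generating cofibrations $\Gamma_i[l]\to C_{\Gamma_i}[l]$. Each of these is a graded-split monomorphism with contractible target whose cofiber is a shift of $\Gamma_i$, so attaching a cell along $\coprod_\alpha\Gamma_{i_\alpha}[l_\alpha]\to M$ produces a graded-split extension of $M$ with cokernel a direct sum of shifts of the $\Gamma_j$. Running over all shifts, a relative cell complex over $0$ is exactly an $n$-semifree module (\S\ref{parres}), and conversely the semifree filtration of any $n$-semifree module is realized stepwise by cell attachments, the attaching map at each step being the closed degree-$0$ morphism classifying the relevant graded-split extension. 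Thus the cofibrant objects are precisely the retracts of $n$-semifree modules. Since graded-split monomorphisms are closed under coproducts, pushouts, transfinite composition and retracts, every cofibration is a graded-split monomorphism, and the cokernel of a relative cell complex is again a cell complex, hence cofibrant. Conversely, a graded-split monomorphism $M\hookrightarrow N$ with cofibrant cokernel $Q$ is a cofibration: writing $Q$ as a retract of an $n$-semifree module and using the twisting morphism $Q\to M$ of the extension to build a relative cell complex over $M$ with that $n$-semifree cokernel exhibits $M\hookrightarrow N$ as a retract of a relative cell complex over $M$.

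The existence of the cofibrantly generated model structure is already provided by Proposition \ref{modelstructure} (whose proof transcribes \cite[Theorem 5.7]{relative_model}); the content here is the translation above. I expect the fussiest point to be the cofibration identification: matching cell attachments with the $n$-semifree filtration (including the shift conventions relating $C_{\Gamma_i}$ to $\Gamma_i$) and, in particular, the standard but slightly delicate retract argument showing that an arbitrary graded-split monomorphism with cofibrant cokernel is a cofibration.
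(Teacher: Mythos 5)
Your proposal is correct and follows the paper's strategy: apply Proposition \ref{modelstructure} to $X=\{\Gamma_0,\dots,\Gamma_n\}$ and translate the three classes, with the fibration identification via $\Hm{A_n}(\Gamma_i,M)\cong\Ker t^{i+1}_M\oplus\Ker t^i_M[-1]$ exactly as in the text (your explicit check that the $\Gamma_i$ are finitely presented, and your spelling-out of why $X$-equivalences are $n$-quasi-isomorphisms via Theorem \ref{propcompgen}, are both left implicit in the paper). The one place you genuinely diverge is the cofibration/cofibrant-object identification. The paper first proves two auxiliary facts --- every cofibrant object is $n$-homotopy projective, and cofibrant objects lift arbitrary (non-closed) morphisms along fibrations --- then adapts \cite[Lemma 2.3, Proposition 2.5]{relative_model}, and shows cofibrants are retracts of $n$-semifree modules by observing that the explicit resolution $P_Q\to Q$ of Corollary \ref{excell} is an acyclic fibration and lifting against it. You instead invoke the description of cofibrant objects as retracts of relative cell complexes and match cell attachments with the semifree filtration directly, plus a hands-on retract construction for the converse (transporting the degree-one twisting cocycle along the retraction $P\to Q$). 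Both work; the paper's route buys a cleaner treatment of the two delicate points you flag (it never needs to reindex transfinite cell complexes, since the explicit $\omega$-indexed resolution is already in hand, and the extension-vs-cell-attachment matching is outsourced to the cited lemma), while yours is more self-contained in exhibiting what the cells actually are.
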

\begin{proof}
    To verify the description of the fibrations, the only thing that must be checked is that a morphism $f\colon M \to N$ is a fibration for the model structure given by Proposition \ref{modelstructure} if and only if the induced morphism $\Ker t^i_M\to \Ker t^i_N$ is surjective for all $i$. This follows immediately from the isomorphisms of graded modules \[
    \Hm{A_n}(\Gamma_0, M) \cong \Ker t_M \text{ and }\Hm{A_n}(\Gamma_i, M)\cong \Ker t^{i+1}_M\oplus \Ker t^i_M[-1] \text{ for $i>0$.}
    \]One can show by reasoning exactly as in \cite[Proposition 2.5]{relative_model} that all cofibrations are graded split injections with cofibrant cokernel. For the converse we need the following two remarks: first, observe that any cofibrant module $Q$ is in particular $n$-homotopy projective. Indeed if $N$ is an $n$-acyclic module then the natural morphism $C_N\to N$ is an acyclic fibration, and since $Q$ is cofibrant any morphism $Q\to N$ admits an extension $Q\to C_N$ i.e. a nullhomotopy. Secondly, we want to show that\footnote{This is the appropriate version of the notion of graded projectivity.} given a fibration $M\to N$ any arbitrary (not necessarily closed) morphism $f\colon Q\to N$ with $Q$ cofibrant admits a lift $Q\to M$; this follows from the fact that arbitrary morphisms $Q\to M$ correspond to closed morphisms $Q\to C_M$; since $C_M$ is contractible, the map $C_M \to C_N$ induced by $M\to N$ is an acyclic fibration and since $Q$ is cofibrant the required lift exists. At this point we can directly adapt the argument of \cite[Lemma 2.3]{relative_model} to prove that all graded split injections with cofibrant cokernel are cofibrations. We are left with characterizing the cofibrant objects. The modules $\Gamma_i$ and their shifts are cofibrant because they are the cokernels of the generating cofibrations, thus so is an arbitrary coproduct of them. If $P$ is an $X$-semifree module, by the previous discussion we know that the inclusions $F_iP \hookrightarrow F_{i+1}P$ are cofibrations so their transfinite composition $0 \hookrightarrow P$ is also a cofibration, hence $P$ is cofibrant. Vice versa if $Q$ is cofibrant, the resolution $P_Q \to Q$ given by Corollary \ref{excell} is readily seen to be an acyclic fibration, and we know that $P_Q$ is $n$-semifree. Since $Q$ is cofibrant, the diagram 
\[\begin{tikzcd}
	0 & {P_Q} \\
	Q & Q
	\arrow[from=1-2, to=2-2]
	\arrow[from=1-1, to=2-1]
	\arrow[from=1-1, to=1-2]
	\arrow["{\id_Q}", from=2-1, to=2-2]
\end{tikzcd}\] admits a lift, i.e. $Q$ is a retract of $P_Q$.
\end{proof}
It is well-known that the category $Z^0A\Mod$ admits a compactly generated model structure with weak equivalences given by the quasi-isomorphisms and fibrations given by the surjective morphisms. For that model structure, both the adjunctions
\[\begin{tikzcd}
	A\Mod & {A_n\Mod} & {\text{and}} & A\Mod & {A_n\Mod}
	\arrow["F"', shift right, from=1-1, to=1-2]
	\arrow["{\Ker t}"', shift right, from=1-2, to=1-1]
	\arrow["F", shift left, from=1-4, to=1-5]
	\arrow["{\Coker t}", shift left, from=1-5, to=1-4]
\end{tikzcd}\] are Quillen adjunctions. 
\begin{proposition}
    The model structure on $Z^0A\Mod$ is obtained by right transfer from the one on $Z^0A_n\Mod$ along the adjunction
\[\begin{tikzcd}
	A\Mod & {A_n\Mod.}
	\arrow["F", shift left, from=1-1, to=1-2]
	\arrow["{\Coker t}", shift left, from=1-2, to=1-1]
\end{tikzcd}\]
\end{proposition}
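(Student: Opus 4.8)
The plan is to verify directly that the projective model structure on $Z^0A\Mod$ has precisely the weak equivalences and the fibrations that right transfer along the adjunction $\Coker t \dashv F$ prescribes, and then to invoke the elementary fact that a model structure is determined by its class of weak equivalences together with its class of fibrations. Here $F\colon A\Mod\to A_n\Mod$ is the right adjoint of the displayed pair, so right transfer from the model structure of Proposition \ref{ourmodel} declares a closed morphism $f$ in $Z^0A\Mod$ to be a weak equivalence (resp.\ a fibration) exactly when $F(f)$ is an $n$-quasi-isomorphism (resp.\ a fibration in $Z^0A_n\Mod$).

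The observation that makes both comparisons immediate is that for an $A$-module $M$ the induced $A_n$-module $F(M)$ has $t$ acting as zero. Hence $tF(M)=0$ and $\Ker t^i_{F(M)}=F(M)$ for every $i\ge 1$, so the $t$-adic associated graded of $F(M)$ is concentrated in weight $0$, with $\Gr_t^0F(M)=M$ as an $A$-module and $\Gr_t^iF(M)=0$ for $i\ge 1$. For weak equivalences I would then argue: $F(f)$ is an $n$-quasi-isomorphism iff $\Gr_t^iF(f)$ is a quasi-isomorphism for all $i$, and since only the weight-$0$ piece is nonzero this holds iff $\Gr_t^0F(f)=f$ is a quasi-isomorphism, i.e.\ iff $f$ is a weak equivalence in the projective structure. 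For fibrations I would use the explicit description in Proposition \ref{ourmodel}: $F(f)\colon F(M)\to F(N)$ is a fibration iff $\Ker t^i_{F(M)}\to\Ker t^i_{F(N)}$ is surjective for $i=1,\dots,n+1$; but each of these maps is literally $f\colon M\to N$, so $F(f)$ is a fibration iff $f$ is surjective, i.e.\ iff $f$ is a fibration in the projective structure.

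Having matched weak equivalences and fibrations, I would conclude that the acyclic fibrations of the two model structures on $Z^0A\Mod$ coincide, hence so do the cofibrations (the maps with the left lifting property against acyclic fibrations) and the acyclic cofibrations; thus the projective model structure \emph{is} the right transfer along $\Coker t\dashv F$. One may add that the transferred structure exists a priori by the transfer theorem for cofibrantly generated model categories, since $Z^0A_n\Mod$ is cofibrantly generated by Proposition \ref{ourmodel}, and that once the two structures coincide $\Coker t$ is automatically left Quillen, consistently with the Quillen adjunction recorded just before the statement. There is no real obstacle beyond this bookkeeping; the two points that need care are fixing the convention that right transfer creates weak equivalences and fibrations through the right adjoint $F$, and noticing that because $t$ acts as zero on $F(M)$ the entire $n$-filtered apparatus degenerates to the ordinary homological one.
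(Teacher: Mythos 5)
Your proposal is correct and follows essentially the same route as the paper's proof: both verify that the forgetful functor $F$ creates the weak equivalences and the fibrations, the key observation in each case being that $t$ acts as zero on modules in the image of $F$, so that $\Ker t^i_{F(M)}=F(M)$ and the $t$-adic associated graded is concentrated in weight $0$. You simply spell out the bookkeeping (and the standard fact that weak equivalences and fibrations determine the model structure) that the paper leaves implicit.
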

\begin{proof}
    We have to prove that a closed morphism $f\colon M \to N$ in $A\Mod$ is a fibration or a weak equivalence precisely when its image via the forgetful functor is. It is immediate to see that the forgetful functor preserves and reflects weak equivalences, so the only question is about the fibrations; this follows from the fact that if $M$ is in the image of $F$, then $\Ker t^i_M=M$ for all $i>0$.
\end{proof}

\section{Filtered torsion derived category of a formal deformation}\label{partorsion}
In this last section we consider the case of a formal deformation $A_t$ over the formal power series ring $k[[t]]$; it is well known \cite{TARRIO19971,Dwyer2002CompleteMA, HomologyComp, POSITSELSKIMGM, LowenCurvature} that for formal deformations the correct category to consider is not the classical derived category. Indeed the theory developed in the artinian case does not generalize verbatim to the formal case because in general, the formal analogue of Lemma \ref{kacyatcy} does not hold: indeed for any torsionfree $A_t$-module $M$ - take for example any $k[[t]]$-free module - one has $\Ker t^i_M=0$ for all $i$, so no information can be inferred on $\Gr_t(M)$ just by knowing $\Gr_K(M)$. Similarly, for any $t$-divisible $A_t$-module - for example, any $k[[t]]$-cofree\footnote{The relevant notion of cofreeness is that of \cite{Positselski_2018}; the prototypical example of a cofree module is the $k[[t]]$-module $k((t))/k[[t]]$.} module - one has that $\Gr_t(M)=0$ regardless of the acyclicity of $\Gr_K(M)$. Therefore to get a meaningful theory we have to place ourselves in a setting where some (homological) form of Nakayama's lemma holds. In this paper we restrict to studying the simpler torsion case, although we do expect that a parallel construction of a filtered derived category of $k[[t]]$-contramodule $A_t$-modules (a version of the complete derived category) is also possible, together with an appropriate version of the co-contra correspondence.
\begin{definition}
    A $k[[t]]$-module $M$ is \emph{torsion} if for every $m\in M$ there exists an $n$ such that $t^nm=0$. An $A_t$-module is torsion if it is torsion as an $k[[t]]$-module. The dg category $\tor{A_n\Mod}$ is the full dg subcategory of $A_n\Mod$ having as objects the torsion $A_t$-modules; we will denote by $\tor{\hot(A_n)}$ the corresponding triangulated homotopy category; since a coproduct of torsion modules is still a torsion module, $\tor{\hot(A_n)}$ is a triangulated category with arbitrary coproducts.
\end{definition}
If $k[t]$ is the coalgebra whose linear dual is the algebra $k[[t]]$, $k[t]$-comodules coincide with torsion $k[[t]]$-modules. 
For torsion modules a version of Nakayama's lemma holds: if $\Ker t_M$ is the zero module, then so is $M$.
\begin{definition}
    A torsion $A_t$-module is said to be \emph{$t$-acyclic} if $\Gr_K(M)$ is acyclic.
\end{definition}
The following should be understood as a homological version of Nakayama's lemma for comodules/torsion modules.
\begin{proposition}
    If $M$ is a $t$-acyclic torsion $A_t$-module, then $\Gr_t(M)$ is acyclic.
\end{proposition}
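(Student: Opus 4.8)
The plan is to reduce to the infinitesimal case, Proposition~\ref{kacyatcy}, by exhausting $M$ by the submodules on which $t$ acts nilpotently. The discussion preceding the statement shows that the formal analogue of Proposition~\ref{kacyatcy} fails exactly because of torsionfree and $t$-divisible modules; the torsion hypothesis eliminates the first phenomenon and, more importantly, presents $M$ as a filtered colimit of finite-order modules, each of which is governed by the already-proved artinian result.

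First I would set $N_j:=\Ker t^j_M$ and note that, since $M$ is torsion, $M=\bigcup_{j\ge 0}N_j=\colim_j N_j$, a filtered colimit along the inclusions $N_j\hookrightarrow N_{j+1}$. Each $N_j$ is a cdg $A_t$-submodule of $M$ annihilated by $t^j$, hence is canonically a cdg module over the infinitesimal deformation $A_{j-1}=A[t]/(t^j)$ (this is well defined because $c\in tA_t$). The $K$-filtration of $N_j$ is obtained by intersecting that of $M$ with $N_j$, and the same elementary computation as in Corollary~\ref{nacyacy} gives $\Gr_K^i(N_j)\cong\Gr_K^i(M)$ for $i\le j-1$ and $\Gr_K^i(N_j)=0$ otherwise. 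Since $M$ is $t$-acyclic, $\Gr_K(N_j)$ is acyclic, i.e.\ $N_j$ is $(j-1)$-acyclic as an $A_{j-1}$-module, so Proposition~\ref{kacyatcy} applied to the artinian deformation $A_{j-1}$ shows that $\Gr_t(N_j)$ is acyclic; in particular every $\Gr_t^i(N_j)=t^iN_j/t^{i+1}N_j$ is an acyclic complex.

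Finally I would pass to the colimit: filtered colimits of modules are exact, and both forming the image of $t^i$ and forming cokernels commute with them, so for each $i$ there is an isomorphism of complexes
\[
\Gr_t^i(M)=\frac{t^iM}{t^{i+1}M}\cong\colim_j\frac{t^iN_j}{t^{i+1}N_j}=\colim_j\Gr_t^i(N_j).
\]
Since cohomology commutes with filtered colimits, a filtered colimit of acyclic complexes is acyclic, hence $\Gr_t^i(M)$ is acyclic for every $i$; that is, $\Gr_t(M)$ is acyclic.

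The only real content lies in the computation of the $K$-graded pieces of $N_j$ and in the reduction itself: torsion is precisely what allows $M$ to be exhausted by the finite-order submodules $N_j$ to which the artinian Proposition~\ref{kacyatcy} applies, and once this is in place the exactness of filtered colimits and their compatibility with $\Gr_t$ are routine. One could instead propagate acyclicity from $\Gr_t^0(M)$ to all $\Gr_t^i(M)$ via the short exact sequence~\eqref{fundamentalses}, which remains valid here by Lemma~\ref{submodules}, but the base case $M/tM=\colim_j N_j/tN_j$ still needs the colimit argument, so nothing is gained.
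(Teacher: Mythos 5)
Your proof is correct. The underlying reduction is the same as the paper's---the torsion hypothesis exhausts $M$ by the submodules $N_j=\Ker t^j_M$, each of which is a $(j-1)$-acyclic module over the artinian quotient $A_{j-1}$, so that Proposition \ref{kacyatcy} applies to it---but you transfer acyclicity back to $M$ differently. The paper treats only the base case $M/tM$ directly, by an elementwise argument (a cycle of $M/tM$ lies in some $\Ker t^n_M$, whose reduction modulo $t$ is acyclic, so the cycle bounds), and then climbs up the $t$-adic filtration by induction using the short exact sequence \eqref{fundamentalses}. You instead identify every graded piece at once, $\Gr_t^i(M)\cong\colim_j\Gr_t^i(N_j)$, via exactness of filtered colimits, and conclude because cohomology commutes with filtered colimits. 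Your route is more uniform (no induction, all pieces handled symmetrically) and makes explicit the ``colimit of artinian cases'' structure that the section then exploits for $\tor{D^t(A_t)}$; the paper's route is more elementary, needing no colimit formalism. The two identifications your argument rests on---$\Gr_K^i(N_j)\cong\Gr_K^i(M)$ for $i\le j-1$ (and $=0$ otherwise), and the commutation of $t^i(-)$ and cokernels with directed unions---both check out, so there is no gap.
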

\begin{proof}
    We first prove that $\frac{M}{tM}$ is acyclic; let $m\in Z^0\frac{M}{tM}$. Since $M$ is torsion, there exists an $n$ such that $m\in \Ker t^n_M$. Since $M$ is $t$-acyclic, $\Ker t^n_M$ is $n$-acyclic so, in particular, $\frac{\Ker t^n_M}{t\Ker t^n_M}$ is acyclic and there exists an $l\in \Ker t^n_M$ such that $m=d(tl)$; so $\frac{M}{tM}$ is acyclic. To conclude that all the graded pieces are acyclic we can reason by induction using the short exact sequence \[
        0 \to \frac{\Ker t_M}{t^{i}\Ker t^{i+1}_M}\to \frac{t^{i-1}M}{t^{i}M}\tow{t} \frac{t^iM}{t^{i+1}M}\to 0.
    \]
\end{proof}
In particular, this proposition tells us that we could have defined a module $M$ to be $t$-acyclic if both $\Gr_t(M)$ and $\Gr_K(M)$ are acyclic. On the other hand, since any $k[[t]]$-cofree module is both torsion and divisible, there are plenty of examples of torsion modules for which the converse does not hold.
\begin{definition}\label{deftderived}
    The \emph{$t$-derived category} of torsion modules $\tor{D^t(A_t)}$
    is defined as the quotient of the homotopy category $\tor{\hot(A_n)}$ by the $t$-acyclic modules.
\end{definition} 
Since $\Gr_K(-)$ commutes with coproducts, the torsion $t$-derived category is a triangulated category with arbitrary coproducts.
\begin{proposition}\label{compform}
     The modules $\Gamma_0, \Gamma_1, \ldots\in \tor{A_t\Mod}$ are homotopy projective compact generators of the category $\tor{D^t(A_t)}$.
\end{proposition}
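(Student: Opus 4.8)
The plan is to transport the infinitesimal arguments showing the $\Gamma_i$ to be $n$-homotopy projective compact generators (cf.\ Theorem \ref{propcompgen} and the proposition preceding it), replacing the induction on the order of the deformation by an induction on the degree of the $K$-filtration, and being careful to invoke Proposition \ref{kacyatcy} only where it still holds. First I would regard each $\Gamma_i$ from \S\ref{consgen} as an $A_t$-module via the surjection $A_t\twoheadrightarrow A_i=A_t/t^{i+1}A_t$; it is killed by $t^{i+1}$, hence torsion, and it is a cdg module by the same computation as in the artinian case. For a torsion $A_t$-module $M$ I would set $(M)_i=\Hm{A_t}(\Gamma_i,M)[1]$, which as a graded module is $\Ker t^{i+1}_M[1]\oplus \Ker t^i_M$ twisted by the matrix \eqref{twistinghom}. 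The proof of Proposition \ref{tria} then goes through unchanged (the auxiliary module ``$Z$'' there is built from $t\Ker t^{i+1}_M$, annihilated by $t^i$, hence still carries a finite $t$-adic filtration with contractible graded pieces), giving for every torsion $M$ the triangle \eqref{triagen}
\[
\Ker t_M[1]\to (M)_i\to \frac{\Ker t^i_M}{t\Ker t^{i+1}_M}\to \Ker t_M[2]
\]
together with the short exact sequence \eqref{ses1}.

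The recurring observation I would use is that, for any torsion $N$, the truncation $\Ker t^i_N$ is an $A_t$-submodule annihilated by $t^i$, hence a genuine module over the order-$(i-1)$ curved deformation $A_t/t^iA_t$, with $K$-filtration graded pieces exactly $\Gr_K^0(N),\dots,\Gr_K^{i-1}(N)$; so as soon as these are acyclic, Proposition \ref{kacyatcy} applied to $A_t/t^iA_t$ yields that $\Gr_t(\Ker t^i_N)$ is acyclic, in particular $\Gr_t^0(\Ker t^i_N)=\Ker t^i_N/t\Ker t^i_N$. To prove $t$-homotopy projectivity, I would take $N$ $t$-acyclic (so all $\Gr_K^j(N)$ are acyclic), deduce from the above and from $\Gr_K^i(N)=t^i\Ker t^{i+1}_N$ acyclic, via \eqref{ses1}, that $\frac{\Ker t^i_N}{t\Ker t^{i+1}_N}$ is acyclic, and then conclude from the triangle above (and from $\Ker t_N=\Gr_K^0(N)$ acyclic) that $(N)_i$, i.e.\ $\Hm{A_t}(\Gamma_i,N)$, is acyclic; for $i=0$ one has $(N)_0=\Ker t_N[1]$ directly. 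Compactness is immediate: on the level of complexes $\Hm{A_t}(\Gamma_i,-)$ is a finite sum of shifts of the functors $\Ker t^j$, $j\le i+1$, each commuting with coproducts of torsion modules, and the quotient $\tor{\hot(A_t)}\to\tor{D^t(A_t)}$ preserves coproducts as in the artinian case.

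For generation, I would take a torsion $N$ with $\Hm{\tor{D^t(A_t)}}(\Gamma_i[l],N)=0$ for all $i\ge 0$ and $l\in\mathbb Z$; by the homotopy projectivity just established this means $(N)_i$ is acyclic for all $i$, and since $N$ is torsion it suffices to show $\Gr_K^i(N)$ is acyclic for every $i$, which I would do by induction on $i$. The base case $i=0$ is $(N)_0=\Ker t_N[1]$ acyclic. For the step, assuming $\Gr_K^0(N),\dots,\Gr_K^{i-1}(N)$ acyclic, the recurring observation gives $\Gr_t^0(\Ker t^i_N)$ acyclic, while the triangle \eqref{triagen} (with $\Ker t_N$ acyclic and $(N)_i$ acyclic) gives $\frac{\Ker t^i_N}{t\Ker t^{i+1}_N}$ acyclic; plugging both into \eqref{ses1} leaves $\Gr_K^i(N)=t^i\Ker t^{i+1}_N$ acyclic. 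Hence $\Gr_K(N)$ is acyclic and $N=0$ in $\tor{D^t(A_t)}$.

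The main obstacle, and the reason the infinitesimal proof does not apply verbatim, is the failure of Proposition \ref{kacyatcy} for general torsion modules (e.g.\ $k[[t]]$-free or $k[[t]]$-cofree ones): only the implication ``$\Gr_K$ acyclic $\Rightarrow\Gr_t$ acyclic'' survives in the formal world, and even that only after recognizing that the finite truncations $\Ker t^i_N$ are honestly modules over the artinian deformations $A_t/t^iA_t$. Organizing the estimates so that Proposition \ref{kacyatcy} is called only on these truncations, and never in the reverse direction, is the whole point; everything else is bookkeeping identical to the artinian case.
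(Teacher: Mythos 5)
Your proof is correct and follows the same route the paper intends: the paper's own argument is the one-line remark that right orthogonality to the $\Gamma_i$ is governed entirely by $\Gr_K(-)$, and your systematic reduction of every step to the artinian truncations $\Ker t^{i+1}_N$ (the only place where Proposition \ref{kacyatcy} may legitimately be invoked in the formal setting) is exactly the bookkeeping that remark compresses. No gaps.
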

\begin{proof}
    This has essentially the same proof as the artinian case, since right orthogonality to the modules $\Gamma_i$ is equivalent to the acyclicity of $\Gr_K(-)$.
\end{proof}
In other words, the category $\tor{D^t(A_t)}$ is the colimit in the category of presentable $\infty$-categories (which in this case is the closure under colimits of the naive colimit) of the system of embeddings \[
D(A) \hookrightarrow D_1(A_1) \hookrightarrow \ldots D^n(A_n) \hookrightarrow \ldots
\]
\begin{remark}
    Like in the artinian case, Proposition \ref{compform} implies that the quotient $\tor{\hot(A_n)}\to \tor{D^t(A_n)}$ has a fully faithful left adjoint.
\end{remark}
\begin{proposition}\label{semiform}
There is a left admissible embedding $\dsi(A_t^{\operatorname{co}})\hookrightarrow \tor{D^t(A_t)}$.    
\end{proposition}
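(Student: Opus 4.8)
The plan is to mirror, in the torsion setting, the chain of results \ref{nacysacy}--\ref{corsemider} from the infinitesimal case. By analogy with \S\ref{parsemider}, $\dsi(A_t^{\operatorname{co}})$ is the Verdier quotient of $\tor{\hot(A_t)}$ by the localizing subcategory $\cS$ of torsion $A_t$-modules that are \emph{semiacyclic as comodules}, i.e.\ the smallest coproduct-closed triangulated subcategory containing the coacyclic torsion modules and the $k[[t]]$-cofree torsion modules $F$ with $\Ker t_F$ acyclic (for such $F$, which are divisible, one has $\Gr^i_K(F)\cong\Ker t_F$ for all $i$, so this is the correct formal replacement for the condition appearing in Lemma~\ref{Rnfree}). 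I will establish two things: (i) every $t$-acyclic torsion $A_t$-module belongs to $\cS$, so that there is a quotient functor $\pi\colon\tor{D^t(A_t)}\to\dsi(A_t^{\operatorname{co}})$; and (ii) $\pi$ admits a fully faithful right adjoint $\mathbf{cf}$, given by cofree resolution. Granting these, the dual form of Proposition~\ref{krause} identifies the essential image of $\mathbf{cf}$ with $\cS^{\perp}\subseteq\tor{D^t(A_t)}$, whose inclusion has left adjoint $\pi$; this is exactly the asserted left admissible embedding.

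For (i) I would follow the proof of Proposition~\ref{nacysacy}, replacing free resolutions by Positselski's cofree (injective comodule) resolutions. For a torsion module $M$ one builds a coresolution $0\to M\to F_1'\to F_2'\to\cdots$ by $k[[t]]$-cofree torsion $A_t$-modules, yielding a triangle $B\to M\to M^{\operatorname{cofr}}\to B[1]$ in $\tor{\hot(A_t)}$ with $B=\Tot^{\oplus}(0\to M\to F_1'\to\cdots)$ coacyclic and $M^{\operatorname{cofr}}=\Tot^{\oplus}(0\to F_1'\to F_2'\to\cdots)$; since a direct sum of injective torsion $k[[t]]$-modules is again one, $M^{\operatorname{cofr}}$ is $k[[t]]$-cofree, so it suffices to see that $\Ker t_{M^{\operatorname{cofr}}}$ is acyclic when $M$ is $t$-acyclic. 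Because $\Ker t$ is additive, commutes with coproducts and preserves differentials, $\Ker t_{M^{\operatorname{cofr}}}$ is the total complex $\Tot^{\oplus}(0\to\Ker t_{F_1'}\to\Ker t_{F_2'}\to\cdots)$; computing its cohomology by first taking cohomology in the resolution direction (\cite[Proposition~6.2]{EILENBERG19621}) produces the right derived functors of the reduction, which are the torsion analogue of Proposition~\ref{dualderived}. Here, because over the discrete valuation ring $k[[t]]$ the injective torsion modules are already injective $k[[t]]$-modules, these derived functors reduce to those of $\Hom_{k[[t]]}(k,-)$ computed on the projective resolution of $k=k[[t]]/t$, giving $R^0(\Ker t)(M)=\Ker t_M=\Gr^0_K(M)$, $R^1(\Ker t)(M)\cong M/tM=\Gr^0_t(M)$, and $R^i(\Ker t)=0$ for $i\ge 2$. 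When $M$ is $t$-acyclic, $\Gr_K(M)$ is acyclic by definition and $\Gr_t(M)$ is acyclic by the proposition preceding Definition~\ref{deftderived}, so all these derived functors are acyclic; hence $\Ker t_{M^{\operatorname{cofr}}}$ is acyclic, $M^{\operatorname{cofr}}$ is semiacyclic, $B$ is coacyclic, and $M\in\cS$.

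For (ii), by the dual of Proposition~\ref{krause} it suffices that in the triangle $B\to M\to M^{\operatorname{cofr}}\to B[1]$ one has $B\in\cS$ (clear, $B$ is coacyclic) and $M^{\operatorname{cofr}}\in\cS^{\perp}$, i.e.\ $\Hm{\tor{D^t(A_t)}}(N,M^{\operatorname{cofr}})=0$ for every semicoacyclic torsion module $N$. Representing a morphism $N\to M^{\operatorname{cofr}}$ by a roof $N\xleftarrow{s}N'\xrightarrow{f}M^{\operatorname{cofr}}$ with $t$-acyclic cone and using (i), the source $N'$ is again semicoacyclic; so it is enough, exactly as in the proof of Corollary~\ref{corsemider}, to show that any closed morphism from a semicoacyclic torsion module into a $k[[t]]$-cofree torsion module factors through a $k[[t]]$-cofree $t$-acyclic module. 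One checks that the class of torsion modules with this factorization property is closed under cones and coproducts, contains every coacyclic module (a morphism out of a coacyclic module into a cofree one factors through a coacyclic $k[[t]]$-cofree module, which is $t$-acyclic), and contains every $k[[t]]$-cofree $t$-acyclic module (via the identity factorization); these are the torsion counterparts of the factorization statements established in the proof of \cite[Theorem~4.2.1]{Positselski_2018} and invoked in Corollary~\ref{corsemider}. Thus the class contains all of $\cS$, which gives the orthogonality. The right adjoint $\mathbf{cf}\colon M\mapsto M^{\operatorname{cofr}}$ is then fully faithful with essential image $\cS^{\perp}\cong\dsi(A_t^{\operatorname{co}})$ and left adjoint $\pi$, so $\mathbf{cf}\colon\dsi(A_t^{\operatorname{co}})\hookrightarrow\tor{D^t(A_t)}$ is a left admissible embedding.

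The step I expect to require the most care is (ii): transporting Positselski's structural lemmas on factoring morphisms through cofree acyclic modules from the artinian co/contramodule setting to that of $k[[t]]$-comodule coefficients, together with the bookkeeping needed because $M^{\operatorname{cofr}}$ is an unbounded-above total complex. Step (i), by contrast, should go through smoothly once the derived functors of the reduction are computed as above. Alternatively, (ii) can be shortcut: once (i) provides the coproduct-preserving quotient $\pi$ out of the compactly generated category $\tor{D^t(A_t)}$ (Proposition~\ref{compform}), $\pi$ automatically admits a right adjoint by Brown representability, and this right adjoint is automatically fully faithful and left-admissibly embedded by the dual of Proposition~\ref{krause}; the explicit model $\mathbf{cf}$ by cofree resolution is then recovered a posteriori.
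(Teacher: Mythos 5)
Your proposal is correct and follows essentially the same route as the paper: the paper's proof simply says "same as the artinian case" (i.e., the torsion analogues of Proposition \ref{nacysacy} and Corollary \ref{corsemider}, run on the comodule/cofree-resolution side), with the only modification being the computation $R^1K(M)=M/tM$ and $R^iK=0$ for $i>1$ of the right derived functors of $\Ker t$ on torsion modules, which is exactly the computation you carry out (the paper establishes it via the six-term exact sequence and effaceability by divisible cofree modules rather than via the resolution of $k$ over $k[[t]]$, but the content is the same). Your write-up just makes explicit the steps the paper leaves implicit.
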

\begin{proof}
Again, this has the same proof as in the artinian case, the only modification being the fact that the right derived functors of the left exact functor \[
\Ker t \colon \tor{A_t\Mod}\to A\Mod
\] are now $R^1K(M)= \frac{M}{tM}$ and $R^iK=0$ for $i>1$. To see this, one uses the existence for every short exact sequence \[
0 \to M \to N \to L \to 0
\] of the short exact sequence \[
0 \to \Ker t_M \to \Ker t_N \to \Ker t_L \to \frac{M}{tM} \to \frac{N}{tN} \to \frac{L}{tL} \to 0
\] and the fact that any torsion $A_t$-module admits an inclusion into a $k[[t]]$-cofree, and thus divisible, $A_t$-module.
\end{proof}
Since the algebra $k[[t]]$ is regular, in the case where $A_t$ is a dg deformation the semiderived category with torsion/comodule coefficients corresponds to the classical derived category of torsion $A_t$-modules which is therefore identified with a left admissible subcategory of $\tor{D^t(A_t)}$.

\appendix
\section{Construction of the injective resolutions}\label{secinj}
 In this appendix we prove the existence of $n$-homotopy injective resolutions of $A_n$-modules (Proposition \ref{corinj}). By definition, an $n$-homotopy injective resolution of an $A_n$-module $M$ is given by an $n$-homotopy injective module $I$ equipped with a $n$-quasi-isomorphism $M\to I$. Just like in the classical case this is a bit more delicate than the projective case, and will require some auxiliary constructions.
    \subsubsection*{Right modules}
    Everything that we have proven up until this point for left $A_n$ modules also holds, with opportune modifications (mostly of signs), for right $A_n$-modules; denote by $\RMod A_n$ the dg category of right cdg $A_n$-modules. We have $n+1$ right $A_n$-modules $D_0 \ldots D_n$ - defined in the same way as the left modules $\Gamma_i$ except for the fact that $\frac{c}{t}$ acts on the left in the twisting matrix - which generate the $n$-derived category of right $A_n$-modules. In this setting, $n$-cell modules are those built out of the modules $D_i$ and like in the case of left modules, any right module $M$ admits an $n$-cell resolution $P\to M$.
    \subsubsection*{Linear duality}
    Denote by $\mathbb{Z}\Mod$ the dg category of complexes of abelian groups, and as usual with $\Hm{\mathbb{Z}}(X,Y)$ the complex of $\mathbb{Z}$-linear morphisms; define the linear duality functor $(-)^\vee$ as $\Hm{\mathbb{Z}}(-, \mathbb{Q}/\mathbb{Z})$. One sees that if $M$ is a left $A_n$ module, $M^\vee$ has a natural structure of a right $A_n$-module and vice versa. Since $\mathbb{Q}/\mathbb{Z}$ is injective as an abelian group, the functor $(-)^\vee$ is exact; using in addition the fact that $M^\vee=0$ implies $M=0$, we get that the linear duality functor reflects exactness. 
    
    Applying the functor $(-)^\vee$ to the exact sequence \[
    0 \to \Ker f \tow{f} M \to N \to \Coker f \to 0
    \]one obtains, for any morphism $f$, natural isomorphisms \begin{equation}\label{kercoker}
        \Ker f^\vee \cong (\Coker f)^\vee \text{ and } (\Ker f)^\vee\cong \Coker f^\vee.
    \end{equation}
   If $M$ is a - either left or right - $A_n$-module, there is a canonical evaluation morphism \[\begin{split}
   \ev_M \colon M &\to M^{\vee\vee}=(M^\vee)^\vee\\
   m &\to [\eta \to \eta(m)]
   \end{split}\] which is easily seen to be an injective closed morphism of $A_n$-modules. Moreover, as a consequence of the isomorphisms $\eqref{kercoker}$, there is a natural isomorphism 
   $(\Coker d_M)^{\vee \vee}\cong \Coker d_{M^{\vee \vee}}$ under which the map $\ev_{\Coker d_M}$ corresponds to the map induced by $\ev_M$ between the cokernels of the differentials. Since $\ev_{\Coker d_M}$ is injective, we get that $\ev_M$ induces an injective map between the cokernels of the differential, and thus also an injective map in cohomology.

\begin{definition}
    Define the left $A_n$-modules $\Gamma_i^*=D_i^\vee$. A left $A_n$-module is said to be \emph{$n$-cocell} if it lies in the minimal triangulated subcategory of $\hot(A_n)$ containing $\Gamma_0^*, \ldots, \Gamma_n^*$ which is closed under products.
\end{definition}

It is straightforward to see that if a right $A_n$-module $P$ is $n$-cell, then $P^\vee$ is $n$-cocell.
\subsubsection*{Auxiliary functors}

Define the functors $F_i\colon \RMod A_n \to \mathbb{Z}\Mod$ and $Q_i\colon A_n\Mod \to \mathbb{Z}\Mod$ as \[F_i(M)=\RMod A_n(D_i, M) \text{, }Q_i(M)=D_i\otimes_{A_n} M;\]Explicitly, $F_i(M)$ is given by the qdg module $\Ker t^{i+1}_M \oplus \Ker t^i_M[-1]$ twisted by the matrix \eqref{twistinghom} while $Q_i(M)$ is the qdg module $\frac{M}{t^{i+1}M}\oplus \frac{M}{t^iM}[1]$ twisted by the matrix \[\begin{bmatrix}
0 & \pi\circ \frac{c}{t} \\
t & 0
\end{bmatrix}\]
where $t\colon \frac{M}{t^{i+1}M} \to \frac{M}{t^iM}$ is induced by the action of $t$, and $\pi\colon \frac{M}{t^{i}M}\to \frac{M}{t^{i+1}M}$ is the natural projection.

\begin{lemma}\label{nacydual}
    A left $A_n$-module $M$ is $n$-acyclic if and only if the right $A_n$-module $M^\vee$ is $n$-acyclic.
\end{lemma}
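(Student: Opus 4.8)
The plan is to express the associated graded of $M^\vee$ for the $K$-filtration in terms of the associated graded of $M$ for the $t$-adic filtration, and then to reduce the statement to the fact that linear duality is exact and reflects exactness. Concretely, I will show that for every $i$ there is an isomorphism of complexes
\[
\Gr_K^i(M^\vee)\;\cong\;\bigl(\Gr_t^i(M)\bigr)^\vee ,
\]
which suffices: by Proposition \ref{kacyatcy} (and its right-module analogue) $M^\vee$ is $n$-acyclic precisely when every $\Gr_K^i(M^\vee)$ is acyclic, hence precisely when every $\bigl(\Gr_t^i(M)\bigr)^\vee$ is acyclic; since $(-)^\vee$ is exact and reflects exactness, this holds precisely when every $\Gr_t^i(M)$ is acyclic, i.e. when $M$ is $n$-acyclic.

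First I would record how the $t$-action transforms under duality. As the right $A_n$-module structure on $M^\vee$ is obtained by transposing the left action on $M$, one has $(t^j_M)^\vee=\pm\, t^j_{M^\vee}$ for all $j\ge 0$ (the sign plays no role), so by \eqref{kercoker}
\[
\Ker t^j_{M^\vee}\;\cong\;\Ker (t^j_M)^\vee\;\cong\;(\Coker t^j_M)^\vee\;=\;(M/t^jM)^\vee .
\]
Moreover, applying the naturality of \eqref{kercoker} to the commuting square relating $t^i_M$ and $t^{i+1}_M$, the inclusion $\Ker t^i_{M^\vee}\hookrightarrow\Ker t^{i+1}_{M^\vee}$ corresponds, under these identifications, to the map $(M/t^iM)^\vee\to(M/t^{i+1}M)^\vee$ dual to the canonical projection $M/t^{i+1}M\twoheadrightarrow M/t^iM$.

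Then I would dualize the short exact sequence
\[
0\to \frac{t^iM}{t^{i+1}M}\to \frac{M}{t^{i+1}M}\to \frac{M}{t^iM}\to 0 .
\]
Since $(-)^\vee$ is exact, this yields
\[
0\to (M/t^iM)^\vee\to (M/t^{i+1}M)^\vee\to \bigl(\Gr_t^i(M)\bigr)^\vee\to 0 ,
\]
which by the previous paragraph is nothing but $0\to\Ker t^i_{M^\vee}\to\Ker t^{i+1}_{M^\vee}\to\bigl(\Gr_t^i(M)\bigr)^\vee\to 0$; identifying the quotient gives the desired isomorphism $\Gr_K^i(M^\vee)\cong\bigl(\Gr_t^i(M)\bigr)^\vee$. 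Tracking the signs built into $(-)^\vee$ shows it is an isomorphism of complexes, not merely of graded modules, and the conclusion follows as indicated above.

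The only genuinely delicate point is this bookkeeping step: one must verify that the isomorphisms $\Ker t^j_{M^\vee}\cong(M/t^jM)^\vee$ are compatible with the structure maps of the two filtrations and with the differentials, which comes down to chasing the signs in the duality functor and the naturality of \eqref{kercoker}. Everything else is formal.
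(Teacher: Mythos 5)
Your proof is correct and follows essentially the same route as the paper: both establish, via the isomorphisms \eqref{kercoker} and the exactness (and reflection of exactness) of $(-)^\vee$, that linear duality exchanges the graded pieces of the $t$-adic filtration on $M$ with those of the $K$-filtration on $M^\vee$, which is exactly the paper's isomorphism \eqref{switchfilt1}, and then both conclude using the equivalence of the two acyclicity conditions. The only cosmetic difference is that you obtain the identification by dualizing the short exact sequence $0\to t^iM/t^{i+1}M\to M/t^{i+1}M\to M/t^iM\to 0$ rather than writing $t^iM/t^{i+1}M$ directly as a kernel, which amounts to the same computation.
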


\begin{proof}
   Using again the isomorphism \eqref{kercoker} one sees that the linear duality functor exchanges the graded pieces of the $t$-adic filtration and the $K$-filtration, giving isomorphisms \begin{equation}\label{switchfilt1}
\left(\frac{t^i M}{t^{i+1}M}\right)^\vee=\left(\Ker \frac{M}{t^{i+1}M}\to \frac{M}{t^iM}\right)^\vee\cong \Coker (\Ker t^i_{M^\vee} \to \Ker t^{i+1}_{M^\vee})=\frac{\Ker t^{i+1}_{M^\vee}}{\Ker t^i_{M^\vee}} 
\end{equation} and \begin{equation}\label{switchfilt2}
\left(\frac{\Ker t^{i+1}_{M}}{\Ker t^i_{M}}\right)^\vee = \left(\Coker (\Ker t^i_{M} \to \Ker t^{i+1}_{M})\right)^\vee\cong \Ker \frac{M^\vee}{t^{i+1}M^\vee} \to \frac{M^\vee}{t^i M^\vee} =\frac{t^i M^\vee}{t^{i+1}M^\vee}.
\end{equation}

If $M$ is $n$-acyclic, by \eqref{switchfilt1} and the fact that $(-)^\vee$ is exact we get that $M^\vee$ is also $n$-acyclic. The other implication follows from the same argument together with fact that $(-)^\vee$ reflects exactness.
\end{proof}
\begin{lemma}\label{functorprop}
    The functors $F_i$ and $Q_i$ have the following properties:
    \begin{enumerate}
    \item There are natural isomorphisms  \begin{equation}\label{funprop}
    F_i(M)^\vee \cong Q_i(M^\vee) \text{ and } Q_i(M)^\vee\cong F_i(M^\vee).
    \end{equation}
    \item Under the isomorphism $Q_i(M)^{\vee \vee}\cong F_i(M^\vee)^\vee\cong Q_i(M^{\vee \vee})$ the map \[
    Q_i(\ev_M)\colon Q_i(M)\to Q_i(M^{\vee \vee})\] corresponds to \[
    \ev_{Q_i(M)}\colon Q_i(M) \to Q_i(M)^{\vee \vee}.
    \]
    In particular, $Q_i(\ev_M)$ is injective and induces an injection between the cokernels of the differentials.
    \item The functors $Q_i$ are right exact and preserve products;
    \item A left $A_n$-module $M$ is $n$-acyclic if and only if $Q_i(M)$ is acyclic for all $i$.
    \end{enumerate}
\end{lemma}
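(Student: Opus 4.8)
The plan is to treat items (1)--(3) essentially independently and then deduce (4) as the payoff, combining (1) with the duality Lemma \ref{nacydual} and the right-module analogues of the generation results for the $\Gamma_i$ (Theorem \ref{propcompgen}). For (1) I would use that $D_i$ is a finitely presented right $A_n^\#$-module --- its underlying graded module is $A_i^\#\oplus A_{i-1}^\#[1]$ and each $A_j^\#=A_n^\#/t^{j+1}A_n^\#$ is finitely presented over $A_n^\#$. The second isomorphism $Q_i(M)^\vee\cong F_i(M^\vee)$ is then just the tensor--hom adjunction $\Hom_{\mathbb{Z}}(D_i\otimes_{A_n}M,\mathbb{Q}/\mathbb{Z})\cong\Hom_{\RMod A_n}(D_i,\Hom_{\mathbb{Z}}(M,\mathbb{Q}/\mathbb{Z}))$, an isomorphism of complexes because every map in sight is $A_n^\#$-linear and commutes with the twisted differentials. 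The first isomorphism $F_i(M)^\vee\cong Q_i(M^\vee)$ is the natural map $D_i\otimes_{A_n}M^\vee\to\Hom_{A_n}(D_i,M)^\vee$, $d\otimes\phi\mapsto[f\mapsto\phi(f(d))]$, which is an isomorphism precisely because $D_i$ is finitely presented (reduce to $D_i=A_n$ via a presentation, both sides being right exact in $D_i$, and apply the five lemma). Alternatively, and this is the form one wants for (2) anyway, both isomorphisms can be read off the explicit twisted-module descriptions: linear duality is exact and, by \eqref{kercoker}, interchanges $\Coker t^j_{(-)}$ with $\Ker t^j_{(-)^\vee}$, so it carries the graded pieces $(M/t^{i+1}M)\oplus(M/t^iM)[1]$ of $Q_i(M)$ onto $\Ker t^{i+1}_{M^\vee}\oplus\Ker t^i_{M^\vee}[-1]$, the graded pieces of $F_i(M^\vee)$, and one checks that the twisting matrix \eqref{twistinghom} corresponds to itself under this identification.

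\textbf{Items (2) and (3).} Granting the naturality in $M$ of the isomorphisms $\alpha_M\colon F_i(M)^\vee\xrightarrow{\sim}Q_i(M^\vee)$ and $\beta_M\colon Q_i(M)^\vee\xrightarrow{\sim}F_i(M^\vee)$ from (1), the composite isomorphism $Q_i(M)^{\vee\vee}\xrightarrow{\sim}Q_i(M^{\vee\vee})$ is $\alpha_{M^\vee}\circ\beta_M^\vee$, and the identity $\alpha_{M^\vee}\circ\beta_M^\vee\circ\ev_{Q_i(M)}=Q_i(\ev_M)$ follows from the fact that $\ev$ is the unit of the self-adjunction $\Hom_{\mathbb{Z}}(-,\mathbb{Q}/\mathbb{Z})\dashv\Hom_{\mathbb{Z}}(-,\mathbb{Q}/\mathbb{Z})$ and is compatible with the tensor--hom adjunctions from which $\alpha$ and $\beta$ are built; this is a routine diagram chase (or a direct check on the explicit descriptions, using the compatibility of \eqref{kercoker} with $\ev$ already recorded before the Definition). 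The ``in particular'' is then immediate: $\ev_{Q_i(M)}$ is injective because $\mathbb{Q}/\mathbb{Z}$ is an injective cogenerator of $\Ab$, and it induces an injection between the cokernels of the differentials by the general property of $\ev$ established before the Definition applied to the module $Q_i(M)$; transporting along the isomorphism gives the same conclusion for $Q_i(\ev_M)$. For (3), write $Q_i(M)$ as the twist of $(M/t^{i+1}M)\oplus(M/t^iM)[1]$; since $M/t^{j+1}M\cong A_j\otimes_{A_n}M$ with $A_j$ finitely presented over $A_n$, the functors $M\mapsto M/t^{j+1}M$ are right exact and commute with products (the same argument as in Proposition \ref{prodcoprod}), a finite direct sum and a shift of such functors inherit both properties, and twisting changes nothing, as a short exact sequence (resp. a product) of twisted modules is such on underlying graded modules with the twisted differential formed componentwise.

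\textbf{Item (4) and the main obstacle.} By Lemma \ref{nacydual}, $M$ is $n$-acyclic iff $M^\vee$ is $n$-acyclic. By the right-module analogue of Theorem \ref{propcompgen} (the $D_i$ are $n$-homotopy projective compact generators of the $n$-derived category of right modules), a right $A_n$-module $N$ is $n$-acyclic iff $\Hom_{\hot(A_n)}(D_i,N[l])=0$ for all $i$ and $l$, i.e. iff $F_i(N)$ is acyclic for all $i$. Applying this to $N=M^\vee$ and using $F_i(M^\vee)\cong Q_i(M)^\vee$ from (1), together with the fact that $(-)^\vee$ both preserves and reflects acyclicity (since $H^k(X^\vee)\cong H^{-k}(X)^\vee$ and $X^\vee=0$ forces $X=0$), we conclude that $M$ is $n$-acyclic iff $Q_i(M)$ is acyclic for all $i$. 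The only genuine work is the bookkeeping in (1) and (2): matching the twisting matrices under linear duality and, above all, pinning down the interaction of the natural isomorphisms with the evaluation maps (signs, and which inclusion/projection occurs where); everything else is formal once the right-module analogues of the earlier results are invoked. If one prefers the adjunction-theoretic proof of (1), the single subtlety is the verification that $D_i$ is finitely presented, which is what makes $D_i\otimes_{A_n}M^\vee\to\Hom_{A_n}(D_i,M)^\vee$ an isomorphism.
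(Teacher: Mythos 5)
Your proposal is correct and follows essentially the same route as the paper: explicit twisted-module descriptions plus the isomorphisms \eqref{kercoker} for (1) and (2), finite presentation of $A_i$ over $A_n$ for product-preservation in (3), and for (4) the combination of Lemma \ref{nacydual}, the right-module analogue of Theorem \ref{propcompgen} applied to the generators $D_i$, item (1), and the fact that $(-)^\vee$ preserves and reflects acyclicity. The only cosmetic difference is that you establish right exactness of $Q_i$ directly from its description as a twist of the quotient functors $M\mapsto M/t^{j}M$ (equivalently, from right exactness of $D_i\otimes_{A_n}-$), whereas the paper deduces it from (1) together with left exactness of $F_i$ and the exactness-reflecting property of linear duality; both are valid.
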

\begin{proof}
    The proofs of points $1)$ and $2)$ consist only in writing down the explicit forms of $F_i$ and $Q_i$ and repeatedly applying the isomorphisms \eqref{kercoker}. The fact that $Q_i$ preserves products follows again from the fact that $A_i$ is finitely presented as an $A_n$-module, so the functor $A_i\otimes_{A_n} -$ preserves products. Since $F_i$ is defined as a hom-functor it is left exact, and the fact that $Q_i$ is right exact follows then from point $1)$ together with the fact that the linear duality functors preserves and reflects exactness: this proves point $3)$. Finally, point $4)$ follows again from point $2)$ together with the facts that the modules $R_i$ are $n$-homotopy projective generators of the $n$-derived category and Lemma \ref{nacydual}.
\end{proof}
\begin{lemma}
    Any $n$-cocell module is $n$-homotopy injective.
\end{lemma}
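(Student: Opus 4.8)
The strategy is to reduce to the generators $\Gamma_0^*,\dots,\Gamma_n^*$ and to compute the relevant hom-complex by linear duality. First I would observe that the full subcategory $\operatorname{Ac}^{n\perp}\subseteq\hot(A_n)$ of $n$-homotopy injective modules is a triangulated subcategory closed under retracts and small products: for a fixed $n$-acyclic module $N$ the functor $\Hm{\hot(A_n)}(N,-)$ sends triangles to long exact sequences and commutes with products, so the condition $\Hm{\hot(A_n)}(N,-)=0$ is inherited by cones, shifts, retracts and arbitrary products. Hence $\operatorname{Ac}^{n\perp}$ contains the minimal triangulated subcategory closed under products generated by any family of $n$-homotopy injective modules, and it is enough to prove that each $\Gamma_i^*=D_i^\vee$ is $n$-homotopy injective.

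Now fix an $n$-acyclic left $A_n$-module $N$. The tensor--hom adjunction over $\mathbb{Z}$ (with $A_n$ acting on one side), together with the definition $(-)^\vee=\Hm{\mathbb{Z}}(-,\mathbb{Q}/\mathbb{Z})$ and the description $Q_i(N)=D_i\otimes_{A_n}N$, yields a natural isomorphism of complexes of abelian groups
\[
\Hm{A_n}(N,D_i^\vee)=\Hm{A_n}\bigl(N,\Hm{\mathbb{Z}}(D_i,\mathbb{Q}/\mathbb{Z})\bigr)\cong\Hm{\mathbb{Z}}\bigl(D_i\otimes_{A_n}N,\mathbb{Q}/\mathbb{Z}\bigr)=Q_i(N)^\vee.
\]
One checks that this isomorphism respects the differentials exactly as in the uncurved case: on both sides the differential is assembled only from the predifferentials $d_N$ and $d_{D_i}$ (the complex of morphisms $\Hm{A_n}(-,-)$ is always a genuine complex, regardless of the curvature), while $\mathbb{Q}/\mathbb{Z}$ carries the zero differential; alternatively this isomorphism can be read off from the explicit twisted forms of $F_i$ and $Q_i$ and the identities \eqref{funprop} in Lemma \ref{functorprop}.

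Finally, since $N$ is $n$-acyclic, Lemma \ref{functorprop}(4) gives that $Q_i(N)$ is acyclic, and because $(-)^\vee$ is exact it follows that $Q_i(N)^\vee$ is acyclic as well; in particular
\[
\Hm{\hot(A_n)}(N,D_i^\vee)=H^0\Hm{A_n}(N,D_i^\vee)=H^0\bigl(Q_i(N)^\vee\bigr)=0.
\]
As $N$ was an arbitrary $n$-acyclic module, $\Gamma_i^*$ is $n$-homotopy injective, and the lemma follows from the first paragraph. I do not expect a genuine obstacle here; the only point requiring a little care is the compatibility of the adjunction isomorphism with the cdg structures, which is the routine verification indicated above.
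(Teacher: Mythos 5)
Your proof is correct and follows essentially the same route as the paper's: both reduce to the cogenerators $\Gamma_i^*$ and apply the duality adjunction $\Hm{A_n}(N,D_i^\vee)\cong Q_i(N)^\vee\cong \Hm{\RMod A_n}(D_i,N^\vee)$; you invoke Lemma \ref{functorprop}(4) where the paper invokes Lemma \ref{nacydual} together with the $n$-homotopy projectivity of the right modules $D_i$, but these are the same facts packaged differently. Your explicit first paragraph (closure of $\operatorname{Ac}^{n\perp}$ under triangles, retracts and products) is a reduction the paper leaves implicit, and is a welcome addition.
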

\begin{proof}
    This will follow once we prove that the modules $\Gamma_i^*=R_i^\vee$ are $n$-homotopy injective. For that, we use that there are isomorphisms \[
    \Hm{A_n\Mod}(M, R_i^\vee)\cong \Hm{\RMod A_n}(R_i, M^\vee)
    \] defined by assigning to a morphism $f\colon M\to  R_i^\vee$ the morphism 
    \[\begin{split}
        R_i &\to M^\vee=\Hm{\mathbb{Z}}(M,\mathbb{Q}/\mathbb{Z})\\
        r &\to [m \to f(m)(r)].
    \end{split}\] 
    The claim then follows from Lemma \ref{nacydual}.
\end{proof}
As before, an $n$-cocell resolution is a closed morphism of $A_n$-modules $M \to I$ where $I$ is $n$-cocell with $n$-acyclic cone; it is clear that an $n$-cocell resolution is in particular an $n$-homotopy injective resolution.
\subsubsection*{Construction of the injective resolutions}
We are now ready to prove the following fact:
\begin{proposition}\label{corinj}
    Any left $A_n$-module $M$ admits an $n$-cocell resolution $M \to I$.
\end{proposition}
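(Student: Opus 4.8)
The plan is to run the construction of $n$-cell resolutions (Lemma~\ref{surjection} and Proposition~\ref{res1}) ``upside down'', replacing the functors $\Hm{A_n}(X_i,-)$ by the functors $Q_i$, coproducts by products, and $\Tot^\oplus$ by $\Tot^\Pi$. The inputs are exactly the four properties of $Q_i$ recorded in Lemma~\ref{functorprop}: $Q_i$ is right exact, it commutes with products, it detects $n$-acyclicity ($M$ is $n$-acyclic iff every $Q_i(M)$ is acyclic), and it satisfies $Q_i(\ev_M)\cong\ev_{Q_i(M)}$.

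First I would prove the dual of Lemma~\ref{surjection}: for every $A_n$-module $M$ there is a module $J$, a product of shifts of the modules $\Gamma_i^*$ and of contractible modules --- in particular $n$-cocell --- together with a closed morphism $M\to J$ such that each $Q_i(M)\to Q_i(J)$ is injective and moreover induces an injection on the cokernels of the differentials. To build it, apply the right-module version of Lemma~\ref{surjection} to $M^\vee$ with $X=\{D_0,\dots,D_n\}$, obtaining an $n$-semifree right module $Q$ and a map $Q\to M^\vee$ inducing, for every $i$, a surjection $F_i(Q)\to F_i(M^\vee)$ which is still surjective on cycles. Dualizing this map and using the natural isomorphisms $F_i(N)^\vee\cong Q_i(N^\vee)$ of Lemma~\ref{functorprop}(1) together with the exactness of $(-)^\vee$, the morphism $M^{\vee\vee}\to Q^\vee$ becomes, after applying $Q_i$, injective and injective on cokernels of differentials --- here surjectivity on cycles dualizes via the isomorphism $(\Ker d)^\vee\cong\Coker d^\vee$ of \eqref{kercoker}. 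Precomposing with $\ev_M\colon M\to M^{\vee\vee}$ and invoking Lemma~\ref{functorprop}(2), which says precisely that $Q_i(\ev_M)$ is injective and injective on cokernels of differentials, one concludes that $J:=Q^\vee$ with the composite $M\to Q^\vee$ has the required properties; that $Q^\vee$ is $n$-cocell follows since $Q$ is a coproduct of shifts of the $D_i$ and of cocones of identities (which are contractible), and $(-)^\vee$ turns coproducts into products and $D_i$ into $\Gamma_i^*$.

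Next I would iterate, exactly along the lines of Proposition~\ref{res1} but in the opposite direction. Given $M$, choose $M\to J^0$ as above and set $C^0=\Coker(M\to J^0)$; applying the same construction to $C^0$ and continuing produces a complex $0\to M\to J^0\to J^1\to\cdots$ with every $J^k$ a product of shifts of the $\Gamma_i^*$ (and of contractibles). Since $Q_i$ and $\Coker d_{Q_i(-)}$ are right exact and, by construction, $Q_i(M)\to Q_i(J^0)$ is already injective (and injective on cokernels of differentials), the sequences $0\to Q_i(M)\to Q_i(J^0)\to Q_i(C^0)\to 0$ and the analogous ones on cokernels of differentials are short exact, and splicing gives that $0\to Q_i(M)\to Q_i(J^0)\to Q_i(J^1)\to\cdots$ is exact for every $i$. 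Put $I=\Tot^\Pi(0\to J^0\to J^1\to\cdots)$; the coaugmentation yields a closed morphism $M\to I$. Because $Q_i$ commutes with products (Lemma~\ref{functorprop}(3)) it commutes with $\Tot^\Pi$ and with cones, so $Q_i(\Cn(M\to I))\cong\Tot^\Pi(0\to Q_i(M)\to Q_i(J^0)\to Q_i(J^1)\to\cdots)$, the totalization of an exact complex of complexes that is bounded below in the external degree, hence acyclic (dual of \cite[\href{https://stacks.math.columbia.edu/tag/09IZ}{09IZ}]{stacks-project}, cf.\ \cite[Proposition~6.2]{EILENBERG19621}). By Lemma~\ref{functorprop}(4), $\Cn(M\to I)$ is $n$-acyclic, so $M\to I$ is an $n$-quasi-isomorphism. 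Finally $I$ is $n$-cocell: writing $I=\lim_k\Tot(J^0\to\cdots\to J^k)$ along split graded surjections exhibits $I$ as a homotopy limit of a tower of finite iterated extensions of the $n$-cocell modules $J^k$, hence $I$ sits in a triangle between two products of $n$-cocell modules and is therefore itself $n$-cocell. This produces the desired $n$-cocell resolution $M\to I$.

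The main obstacle is the first step, and specifically the presence of $\ev_M$ in it. Unlike the projective case one cannot simply dualize a full $n$-cell resolution of the right module $M^\vee$, because $\ev_M\colon M\to M^{\vee\vee}$ is not an $n$-quasi-isomorphism in general --- it is only an injection that is injective on cohomology. The construction survives because at each stage one does not require $Q_i(M\to J^k)$ to be a quasi-isomorphism but only to be injective and injective on cokernels of differentials, which is exactly the property preserved under composition with $\ev$ by Lemma~\ref{functorprop}(2); the quasi-isomorphism is then recovered globally from the totalization. Everything else is a routine transcription of Proposition~\ref{res1}, with coproducts replaced by products and the compactness of the generators replaced by the fact that each $Q_i$ preserves products.
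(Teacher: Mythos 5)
Your proposal is correct and follows essentially the same route as the paper: dualize the surjection lemma for right modules, precompose with $\ev_M$ to handle the failure of $M\to M^{\vee\vee}$ to be an $n$-quasi-isomorphism, iterate using the right exactness and injectivity on $Q_i$ and on $\Coker d_{Q_i(-)}$, and conclude by totalizing with $\Tot^\Pi$ and invoking Lemma \ref{functorprop}(3)--(4). The only cosmetic difference is that you dualize the two-step module from Lemma \ref{surjection} directly, whereas the paper dualizes the full $n$-cell module from Proposition \ref{res1}; both yield an $n$-cocell $I_0$ with the needed injectivity properties.
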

\begin{proof}
    Consider the right $A_n$-module $M^\vee$; recalling that $F_i(-)=\RMod A_n(R_i, -)$, apply to $M^\vee$ the version for right modules of Lemma \ref{res1} with $X=\oplus R_i$ to obtain a closed morphism\[
    P \tow{p} M^\vee 
    \] with $P\in n$-cell such that $F_i(P)\tow{F_i(p)} F_i(M)$ and $\Ker d_{F_i(P)} \tow{F_i(p)}\Ker d_{F_i(M)}$ are surjective for all $i$. Dualizing, we get a map \[
    M^{\vee\vee} \tow{p^\vee} P^\vee
    \] with the property that $F_i(M^\vee)^\vee\tow{F_i(p)^\vee} F_i(P)^\vee$ and $(\Ker d_{F_i(M^\vee)})^\vee \tow{F_i(p)^\vee} (\Ker d_{F_i(M^\vee)})^\vee$ are injections. Using Lemma \ref{functorprop} $1)$ and the fact that under the isomorphism \eqref{funprop} we have $F_i(p)^\vee=Q_i(p^\vee)$, we get that $p^\vee$ induces injections $Q_i(M^{\vee \vee}) \tow{Q_i(p^\vee)} Q_i(P^\vee)$ and $\Coker d_{Q_i(M^{\vee\vee)}}\tow{Q_i(p^\vee)} \Coker d_{Q_i(P^\vee)}$. Compose now $p^\vee$ with $\ev_M$ to obtain a map \[
    M \tow{\ev_M} M^{\vee \vee} \tow{p^\vee} P^\vee.
    \] Set $P^\vee=I_0$; Since $\ev_M$ induces injections on $Q_i$ and on the cokernels of the differentials, we have that the induced maps $Q_i(M) \to Q_i(I_0)$ and $\Coker d_{Q_i(M)}\to \Coker d_{Q_i(I_0)}$ are injective.
    Moreover since $P$ is $n$-cell, $I_0=P^\vee$ is $n$-cocell. denote by $C_0$ the cokernel of $M\to I_0$; since $Q_i$ and $\Coker d_{Q_i(-)}$ are right exact functors, the sequences \[
    0 \to Q_i(M) \to Q_i(I_0) \to Q_i(C_0) \to 0
    \] and \[
    0 \to \Coker d_{Q_i(M)} \to \Coker d_{Q_i(I_0)} \to \Coker d_{Q_i(C_0)}\to 0
    \] are exact. Applying again the procedure described above to obtain a morphism $C_0 \to I_1$, we can iterate the construction to get a sequence \[
    M \to I_0 \to I_1 \to I_2 \to \ldots
    \] such that each $I_k$ is $n$-cocell and the sequences  \[
    0\to Q_i(M) \to Q_i(I_0)\to Q_i(I_1) \to Q_i(I_2)\to \ldots
    \] and \[
    0\to \Coker_{Q_i(M)} \to \Coker_{Q_i(I_0)}\to \Coker_{Q_i(I_1)} \to \Coker_{Q_i(I_2)}\to \ldots
    \] are exact. Setting $I=\Tot^\Pi (I_\bullet)$, we have a natural map $M \to I$ whose cone is isomorphic to\[
    T=\Tot^\Pi(
    0\to M \to I_0 \to I_1 \to I_2 \to \ldots
    );
    \]Since $Q_i$ commutes with cones and products, we get that \[Q_i(T)\cong \Tot^\Pi(0\to Q_i(M) \to Q_i(I_0) \to Q_i(I_1) \to Q_i(I_2) \to \ldots)\] which by \cite[\href{https://stacks.math.columbia.edu/tag/09J0}{09J0}]{stacks-project} is acyclic; therefore $T$ is $n$-acyclic, and $M\to I$ is an isomorphism in $D^n(A_n)$. The proof that $I$ is $n$-cocell is completely dual to the projective case, consisting of an application of the dual of Lemma \ref{Xsemifree} (see \cite[\href{https://stacks.math.columbia.edu/tag/09KR}{09KR}]{stacks-project}).
\end{proof}
This concludes the proof of Proposition \ref{injres}.
\begin{corollary}
    The classes of homotopy injective and $n$-cocell modules coincide. In particular if $M$ is an $n$-homotopy injective $A_n$-module, then $\Gr_t(M)$ and $\Gr_K(M)$ are homotopy injective $A$-modules.
\end{corollary}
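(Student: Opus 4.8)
The plan is to run the argument of Proposition~\ref{resolutions} on the injective side, using the $n$-cocell resolutions produced in Proposition~\ref{corinj}. One inclusion is already in hand: we have shown above that every $n$-cocell module is $n$-homotopy injective. For the reverse inclusion, I would take an $n$-homotopy injective module $I$ and choose, by Proposition~\ref{corinj}, an $n$-cocell resolution $I\to I'$; its cone is $n$-acyclic, so $I\to I'$ is an isomorphism in $D^n(A_n)$. Since $I'$ is $n$-cocell it is $n$-homotopy injective, so $I$ and $I'$ both lie in $\operatorname{Ac}^{n\perp}$. By Proposition~\ref{injres} (equivalently, by the dual of Proposition~\ref{krause}) the composite $\operatorname{Ac}^{n\perp}\hookrightarrow\hot(A_n)\to D^n(A_n)$ is an equivalence, and in particular reflects isomorphisms; hence $I\to I'$ is already a homotopy equivalence. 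As the $n$-cocell modules form a triangulated subcategory of $\hot(A_n)$, and are therefore closed under isomorphism in $\hot(A_n)$, it follows that $I$ is $n$-cocell.

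For the ``in particular'' clause it suffices to prove that $\Gr_t(-)$ and $\Gr_K(-)$ carry $n$-cocell $A_n$-modules to homotopy injective $A$-modules. The full subcategory of $\hot(A_n)$ of those $M$ for which both $\Gr_t(M)$ and $\Gr_K(M)$ are homotopy injective is triangulated (both functors are exact) and closed under products (both commute with products by the remark after Proposition~\ref{prodcoprod}, and $\operatorname{Ac}^{\perp}\subseteq\hot(A)$ is closed under products), so it is enough to treat the generators $\Gamma_0^{*},\dots,\Gamma_n^{*}$. Here I would invoke the right-module versions of the identities \eqref{switchfilt1} and \eqref{switchfilt2}: writing $\Gamma_i^{*}=D_i^{\vee}$, these give $A$-module isomorphisms $\Gr_t(\Gamma_i^{*})\cong(\Gr_K(D_i))^{\vee}$ and $\Gr_K(\Gamma_i^{*})\cong(\Gr_t(D_i))^{\vee}$. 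Since $D_i$ is $n$-homotopy projective, it is $n$-cell (right-module analogue of Proposition~\ref{resolutions}), so by the right-module analogue of Lemma~\ref{nprojproj} both $\Gr_t(D_i)$ and $\Gr_K(D_i)$ are $A$-cell, hence homotopy projective $A$-modules. Finally, linear duality sends a homotopy projective right $A$-module $Q$ to a homotopy injective left $A$-module: for acyclic $N$ one has $\Hm{A}(N,Q^{\vee})\cong(Q\otimes_A N)^{\vee}$, which is acyclic because $Q\otimes_A N$ is acyclic (as $Q$ is homotopy projective) and $(-)^{\vee}$ is exact.

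I expect the only real obstacle to be bookkeeping rather than anything deep: one must check that the appendix's right-module constructions genuinely mirror the left-module statements invoked (Proposition~\ref{resolutions} and Lemma~\ref{nprojproj}), and that \eqref{switchfilt1}--\eqref{switchfilt2} are applied with filtration indices lined up so that the graded pieces of $\Gamma_i^{*}$ really correspond to duals of the graded pieces of $D_i$. Everything else is formal, resting on the already-established facts that $\operatorname{Ac}^{n\perp}\to D^n(A_n)$ is an equivalence, that $n$-cocell implies $n$-homotopy injective, and that $\Gr_t$ and $\Gr_K$ are exact and commute with products.
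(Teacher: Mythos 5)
Your proof is correct and follows exactly the route the paper intends (the corollary is stated without proof as the dual of Proposition \ref{resolutions} and Lemma \ref{nprojproj}): an $n$-cocell resolution of an $n$-homotopy injective module is forced to be a homotopy equivalence, and the graded pieces are handled by reduction to the cogenerators $\Gamma_i^{*}$. Your explicit verification for $\Gamma_i^{*}$ via the duality isomorphisms \eqref{switchfilt1}--\eqref{switchfilt2} and the adjunction $\Hm{A}(N,Q^{\vee})\cong(Q\otimes_A N)^{\vee}$ is a correct filling-in of the details the paper leaves implicit.
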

\begin{remark}
    The fact that our construction for injective resolutions is somewhat more involved than the one for projective resolutions is due to the fact that, while the generators $\Gamma_i$ are compact, the cogenerators $\Gamma_i^*$ are \textit{not} cocompact - cocompact objects rarely exist in module categories. This is why we had to introduce the functors $Q_i$; indeed while the complex $\Hm{A_n}(\prod_k M_k, \Gamma_i^*)\cong  Q_i(\prod_k M_k)^\vee$ seems hard to control, the fault lies only in the linear dual - once we manage to get rid of it, the functor $Q_i$ is as well behaved as one could hope.
\end{remark}
    Just like we could have proven the existence of projective resolutions using Brown representability, it is possible to use a similar strategy for injective resolutions; however, since as we discussed the cogenerators $\Gamma_i^*$ are not cocompact, some care is needed. The correct ingredient to use would seem to be the notion of $0$-compactness  introduced in \cite{oppermann2018change} which is strictly related to the fact that, while it is not possible to write down $\Hm{A_n}(\prod_k M_k, \Gamma_i^*)$ in terms of $\Hm{A_n}( M_k, \Gamma_i^*)$, it is true that if $\bigoplus_k \Hm{A_n}(M_k, \Gamma_i)$ is acyclic the same is true for $\Hm{A_n}(\prod_k M_k, \Gamma_i^*)$ and vice versa. Using the fact that the modules $\Gamma_i^*$ are $0$-cocompact, Theorem 6.6 of \cite{oppermann2018change} yields the existence of the desired resolution.

\printbibliography
\end{document}